\numberwithin{equation}{section}
\newtheorem{theorem}{Theorem}[section]
\newtheorem{corollary}[theorem]{Corollary}
\newtheorem{lemma}[theorem]{Lemma}
\newtheorem{proposition}[theorem]{Proposition}
\theoremstyle{definition}
\newtheorem{definition}[theorem]{Definition}
\theoremstyle{remark}
\newtheoremstyle{mystyle}{2mm}{0mm}{}{}{\bfseries}{}{ }{\thmnumber{#2}.\thmnote{
#3}}
\theoremstyle{mystyle}
\newtheorem{fact}[theorem]{}
\newtheoremstyle{myremark}{2mm}{0mm}{}{}{\bfseries}{}{ }{\thmname{#1}
\thmnumber{#2}. \thmnote{ #3}}
\theoremstyle{myremark}
\newtheorem{remark}[theorem]{Remark}
\def\bG{\boldsymbol{\Gamma}}
\def\bU{\boldsymbol{\Upsilon}}
\def\kG{\Bbbk\boldsymbol{\Gamma}}
\def\ot{\otimes}
\def\wot{{\,\widehat{\otimes}\,}}
\def\Id{\mathrm{I}} 
\DeclareMathOperator{\Ker}{Ker} \DeclareMathOperator{\h}{H}
\DeclareMathOperator{\hh}{HH}
\DeclareMathOperator{\Ho}{H}
\DeclareMathOperator{\Hom}{Hom}\DeclareMathOperator{\tor}{Tor}
\DeclareMathOperator{\coker}{Coker}
\DeclareMathOperator{\ext}{Ext}
\DeclareMathOperator{\im}{Im}
\DeclareMathOperator{\K}{K}
\DeclareMathOperator{\Hd}{Hdim}
\DeclareMathOperator{\Pd}{pd}
\DeclareMathOperator{\Sh}{Sh}
\def\pd{\partial}
\def\htp{{\,\widehat{\otimes}\,}}
\def\fc{\mathfrak{c}}\def\bK{\Bbbk}
\email{}
\thanks{}
\begin{document}
\title{Koszul Pairs and Applications}
\author{Pascual Jara}
	\address{University of Granada, Department of Algebra, Granada E-18071, Spain}
	\email{pjara@ugr.es}
\author{Javier L\'{o}pez Pe\~na}
	\address{Department of Mathematics, University College London, Gower Street, London WC1E 6BT, UK}
	\email{jlp@math.ucl.ac.uk}
\author{Drago\c s \c Stefan}
	\address{University of Bucharest, Department of Mathematics, 14 Academiei Street, Bucharest Ro-010014, Romania}
	\email{dragos.stefan@fmi.unibuc.ro}
\date{}

\begin{abstract}
  Let $R$ be a semisimple ring. A pair $(A,C)$ is called almost-Koszul if $A$ is a connected graded $R$-ring and $C$ is a compatible  connected graded $R$-coring. To an almost-Koszul pair one associates three chain complexes and three cochain complexes such that one of them is exact if and only if the others are so. In this situation $(A,C)$ is said to be Koszul. One proves that a connected $R$-ring $A$ is Koszul if and only if there is a connected $R$-coring $C$ such that $(A,C)$ is Koszul. This result allows us to investigate the Hochschild (co)homology of Koszul rings. We apply our method to show that the  twisted tensor product of two Koszul rings is Koszul. More examples and applications of Koszul pairs, including a generalization of Fr\"oberg Theorem \cite{Fr}, are discussed in the last part of the paper.
\end{abstract}

\subjclass[2010]{Primary 16E40; Secondary 16T10 and 16T15}
  \maketitle
  
\section*{Introduction}

  Koszul algebras were introduced by Priddy in \cite{Pr}. By definition,  the $\mathbb{N}$-graded algebra $A:=\oplus_{n\in\mathbb{N}}A^n$ over a field $\Bbbk$ is said to be (left) Koszul  if $A$ is connected, that is $A^{0}=\Bbbk$, and there is a resolution $P_{\ast}$ of $A^0$ by projective graded left $A$-modules such that each $P_{n}$ is generated by homogeneous elements of degree $n$. This class of algebras has outstanding applications in numerous fields of Mathematics, including Representation Theory, Algebraic Geometry, Algebraic Topology, Quantum Groups and Combinatorics; see \cite{PP} and the references therein. 

  Koszul algebras have been generalized by Beilinson, Ginzburg and Soergel. Following \cite{BGS}, we say that a graded ring $A$ is Koszul if $A^0$ is a semisimple ring and it has a resolution $P_\ast$ with the same properties as above. Many fundamental properties of Koszul algebras still hold in this more general setting. For instance, such a ring is always quadratic. The Koszulity of a ring is equivalent to the exactness of the Koszul complex. Moreover,  for any  Koszul ring $A$ such that $A^n$ is a finitely generated left $A^0$-module, the graded ring $A^{\#}:=\mathrm{Ext}_A^\ast(A^0,A^0)$ is Koszul as well and $(A^{\#})^{\#}=A$.  Here the functors $\mathrm{Ext_A^\ast(-,-)}$ are defined on the category of all left $A$-modules. The opposite ring $A^{op}$ of a left Koszul ring $A$ is left Koszul over $(A^0)^{op}$. Thus, a left Koszul ring is right Koszul, and conversely.

  The Hochschild cohomology of an algebra $A$ with coefficients in an $A$-bimodule $M$ is defined by the relation $\hh^\ast(A,M):=\ext_{A\ot A^{op}}(A,M)$. Hochschild cohomology may also be defined using the standard Hochschild complex \cite[Chapter 9.1]{We}.  Rather recently, a lot of effort has been paid to the explicit calculation  of these cohomology groups, see for example \cite{Ci1}, \cite{Ci2}, \cite{Ci3}, \cite{GG2}, \cite{ShW1}, \cite{ShW2} and \cite{Wa}. Although the standard complex is an important tool for the study of Hochschild cohomology, it is not helpful for computational purposes. In general, in order to compute Hochschild cohomology, ad hoc complexes are constructed, depending on the algebra that one works with. One of our main goals is to show that in the case of Koszul rings such complexes can be obtained using Coring Theory.  
  
  In order to explain our approach we need some terminology and notation. Let $R$ be a given ring and let $({}_R\mathrm{Mod}_R,\otimes,R)$ denote the tensor category of $R$-bimodules with respect to the tensor product $\ot$ of $R$-bimodules. Note that a graded ring $A:=\oplus_{n\in\mathbb{N}}A^n$ with $A^0=R$ may be seen as a connected graded algebra in this tensor category, and conversely. For short, we shall say that $A$ is a connected $R$-ring. A connected $R$-ring is said to be strongly graded if, in addition, $A^nA^m=A^{n+m}$, for any $n,m\in\mathbb{N}$. Connected and strongly graded $R$-corings can be defined by duality, as coalgebras in the tensor category $_R\mathrm{Mod}_R$. Since we work with graded structures, the multiplication of a ring $A$ and the comultiplication of a coring $C$ are uniquely determined by some $R$-bimodule morphisms $m^{p,q}:A^{p}\otimes A^{q}\rightarrow A^{p+q}$ and $\Delta_{p,q}:C_{p+q}\rightarrow C_{p}\otimes C_{q}.$

  We can now introduce \textit{almost-Koszul pairs}, one of the the main tools that we use for studying Koszul rings. By definition, such a pair consists of a connected $R$-ring $A$ and a connected $R$-coring $C$, together with an isomorphism of $R$-bimodules ${\theta_{C,A}}:C_1\to A^1$ which satisfies the relation 
\begin{equation}\label{ec:almost-koszul-intro}
  m^{1,1}\circ (\theta _{C,A}\otimes \theta _{C,A})\circ \Delta_{1,1}=0.
\end{equation}
  To every strongly graded $R$-ring $A$ corresponds a canonical almost-Koszul pair $\left(A, T(A)\right)$. By construction, the homogeneous component of degree $n$ of $T(A)$ is the $R$-bimodule $T_n(A):=\mathrm{Tor}_n^A(R,R)$. Note that $\mathrm{Tor}_\ast^A(R,R)$ is the homology of the chain $R$-coring $R\otimes_A \beta_\ast^{\,l}(A)$, where $\beta_\ast^{\,l}(A)$ denotes the normalized bar  resolution of $R$ regarded as a left $A$-module. Thus $T(A)$ has a natural connected $R$-coring structure. In this example, the fact that $A$ is strongly graded guarantees the existence of the $R$-bimodule isomorphism $\theta_{T(A),A}:T_1(A)\to A^1$. By duality, to every strongly graded $R$-coring $C$ corresponds an almost-Koszul pair $(E(C),C)$, where $E(C):=\mathrm{Ext}_C^\ast(R,R)$. Here, the functors $\mathrm{Ext}_C^\ast(-,-)$ are defined on the category of  right $C$-comodules.

  For each almost-Koszul pair $(A,C)$ we associate three chain complexes: $\mathrm{K}_\ast^l(A,C)$, $\mathrm{K}_\ast^r(A,C)$ and $\mathrm{K}_\ast(A,C)$. The first and the second are complexes of graded left and right $A$-modules, respectively. The third one lives in the category of graded $A$-bimodules. By duality, we also define three cochain complexes $\mathrm{K}^\ast_l(A,C)$, $\mathrm{K}^\ast_r(A,C)$ and $\mathrm{K}^\ast(A,C)$ in the categories of left, right and two-sided comodules over $C$. 

  By Theorem \ref{thm: Koszul}, all six complexes associated to an almost-Koszul pair $(A,C)$ are exact, provided that one of them is so. In this case we shall say that $(A,C)$ is a \textit{Koszul pair}. Note that, for any Koszul pair $(A,C)$, the complexes $\mathrm{K}_\ast^l(A,C)$ and $\mathrm{K}_\ast^r(A,C)$  provide projective resolutions of $R$ in the categories of  left and right graded $A$-modules, respectively. Similarly, $\mathrm{K}^\ast_l(A,C)$ and $\mathrm{K}^\ast_r(A,C)$ are injective resolutions of $R$ in suitable categories of $C$-comodules. Supposing in addition that $R$ is a separable algebra over a field $\Bbbk$, then $\mathrm{K}_\ast(A,C)$ is a resolution of $A$ by projective graded $A$-bimodules and $\mathrm{K}^\ast(A,C)$ is a resolution of $C$ by injective graded $C$-bicomodules.

  Some useful properties of Koszul pairs are investigated in the second section of the paper. In Theorem \ref{te:alg-echiv}  one shows that, for such a pair $(A,C)$, both components are strongly graded and, moreover, $(A,T(A))$ and $(E(C),C)$ are Koszul as well. Moreover, in this situation, it follows that $C$ and $T(A)$ are isomorphic as graded corings. The relationship between Koszul pairs and Koszul rings is explained in Theorem \ref{te:caracterizare_Koszul}: $A$ is such a ring if and only if there exists a Koszul pair $(A,C)$. Taking into account that the components of a Koszul pair uniquely determine each other, it is easy to see that
\[
  E(T(A))\cong A\text{\quad and\quad}T(E(C))\cong C,
\] 
  without any finiteness condition imposed on $A$ or $C$. These isomorphisms suggest that the {coring} $T(A)$ and the {ring} $E(C)$ may be thought of as  (Koszul) duals of $A$ and $C$, respectively. For example, the Koszul dual of a tensor $R$-ring $T^a_R(V)$ is the unique connected $R$-coring $C:=R\oplus V$, which is concentrated in degree $0$ and $1$. In the last part of the paper we compute the dual coring for other  Koszul $R$-rings, such as: trivial extension, multiparametric quantum spaces and quotients of quiver algebras by ideals generated by $2$-paths.
  
  On the other hand, in view of Theorem \ref{te:caracterizare_Koszul}, we say that  a strongly graded $R$-coring $C$ is Koszul if and only if $(E(C),C)$ is a  Koszul pair. With this definition in hand, it follows that the functors $T$ and $E$ preserve Koszulity. Multiparametric quantum spaces are Koszul both as a ring and a coring, cf. Theorem \ref{te: Koszul Pair} and \S\ref{fa:O_q}. Since Koszul corings might be useful for the study of other quantum groups, their properties will be investigated in a subsequent article.  

  We have already noticed that  $\mathrm{K}_{\ast }(A,C)$ is a resolution of $A$ as a projective (graded) bimodule over itself, for any separable algebra $R$ over a field $\bK$ and any Koszul pair $(A,C)$. In the third section we use this resolution to get a new (co)chain complex that computes the Hochschild (co)homology of $A$ with coefficients in an arbitrary bimodule. As an immediate corollary we show that, for any Koszul $R$-ring $A$, the projective dimension of $A$ in the category of $A$-bimodules (\textit{i.e.}  the Hochschild dimension of $A$) can be computed using the formula
  \begin{equation*}
    \mathrm{Hdim}(A)=\mathrm{sup}\{n\mid T_{n}(A)\neq 0\}.
  \end{equation*}%
  
  It is well known that the class of Koszul algebras is closed under twisted tensor products. As another  application of Koszul pairs we prove a similar result for Koszul rings. Let $A$ and $B$ be two strongly graded $R$-rings. To every invertible  graded twisting map $\sigma:B\otimes A\to A\otimes  B$ we associate an invertible graded twisting map  of $R$-corings $\tau:T(A)\otimes T(B)\to T(B)\otimes T(A) $. Thus the twisted tensor products $A\otimes_\sigma B$ and $T(A)\otimes_\tau T(B)$ make sense and, in fact, they always define an almost-Koszul pair $ (A\otimes_\sigma B,T(A)\otimes_\tau T(B))$. By Theorem~\ref{thm:koszul} this pair is Koszul, provided that $A$ and $B$ are Koszul $R$-rings. In particular, $A\ot_\sigma B$ is Koszul too. We have already mentioned that, for any Koszul pair $(A,C)$, the coring structure of $T(A)$ is captured on the fly by the isomorphism $T(A)\cong C$. In particular, for a twisted tensor product of two Koszul rings we get $T(A\ot_\sigma 
B)\cong T(A)\otimes_\tau T(B)$.  
	
  We next use the above Koszul pair to identify  the Hochschild (co)homology of a twisted tensor product $A\otimes_\sigma B$ with the total (co)homology of a certain double complex. In homology, the column-wise and row-wise filtrations of the double complex lead us to two spectral sequences converging to the Hochschild homology of $A\ot_\sigma B$, see Theorem \ref{thm:DoubleComplexHom}. Under some additional conditions, similar spectral sequences are obtained in cohomology. By definition generalized Ore extensions are examples of twisted tensor products. We specialize our results on Hochschild homology  to this more particular setting in the last part of the fifth section. 
	
  Our method based on coring techniques is also useful for the investigation of the Gerstenhaber structure of $\hh^\ast(A,A)$, in the case when $A$ is a  Koszul ring (for example the smashed product between a Koszul ring and a finite dimensional group algebra over a field of characteristic zero). Details about these results will be given in a sequel of this paper.
	
  Some more examples of Koszul pairs, related to braided bialgebras in the tensor category $_R\mathrm{Mod}_R$, are discussed in the last section. First of all, in Theorem  \ref{te: Koszul Pair} we prove that any couple of strongly graded braided commutative bialgebras in $_R\mathrm{Mod}_R$, under some conditions on their braidings, defines a Koszul pair. In particular we prove that any symmetric braided bialgebra in $_R\mathrm{Mod}_R$ is Koszul, provided that the braiding is an involution. The incidence algebra of the power set of a finite set is a nontrivial example of such bialgebras, cf.  Theorem \ref{thm:power_set}. For a different approach to Koszulity of (reduced) incidence algebras the reader is referred to \cite{RS}. As a last application, in Theorem \ref{thm:Froberg}, we extend  a result of Fr\"oberg \cite{Fr}.    
	
\section{Almost-Koszul pairs}

  In this section we introduce almost-Koszul pairs and we investigate their basic properties. We start by fixing the terminology and the notation that we use. Throughout, $R$ will denote a  semisimple ring.

\begin{fact}[$R$-rings.]\label{R-ring}
  The main objects that we work with are (co)unital and (co)associative (co)algebras in the tensor category of $R$-bimodules $(_{R}\mathcal{M}_{R},\ot,R)$. For the tensor product of two $R$-bimodules we shall always use the unadorned tensor product symbol $\otimes$. The unit object in $_{R}\mathcal{M}_{R}$ is $R$, regarded as a bimodule with respect to the left and right actions induced by the multiplication of $R$.  
  
  By definition, an $R$-\emph{ring} is an associative and unital algebra in $_{R}\mathcal{M}_{R}.$ Therefore an $R$-ring consists of an associative and unital ring $A$ together with a morphism of unital rings $u:R\rightarrow A.$  An $R$-ring $A$ is \emph{graded }if it is equipped with a decomposition $A=\textstyle\oplus_{n\in \mathbb{N}}A^{n}$ in $_{R}\mathcal{M}_{R},$ such that the multiplication $m:A\otimes A\rightarrow A$ maps $A^{p}\otimes A^{q}$ to $A^{p+q}.$ If  $A^{0}=R$, then we shall say that $A$ is \emph{connected}. The multiplication $m$ induces an $R$-bilinear map $m^{p,q}:A^{p}\otimes A^{q}\rightarrow A^{p+q},$ for all non-negative integers $p$ and $q$. The  $R$-ring $A$ is said to be \emph{strongly graded} if and only if it is connected and all maps $m^{p,q}$ are surjective. Obviously, $A$ is strongly graded if and only if $m^{p,1}$ is surjective, for all $p$.  The projection of $A$ onto $A^{n}$ will be denoted by $\pi _{A}^{n}$. 
    
  We denote the ideal $\oplus_{n>0}A^{n}$ by $\overline{A}$. The multiplication of $A$ induces a bimodule map $\overline{m}:\overline{A}\otimes \overline{A}\rightarrow \overline{A}$. Let $R^{op}$ be the opposite ring of $R$. If $V$ and $W$ are $R$-bimodules, then they become $R^{op}$-bimodules by interchanging their left and right module structures. The group morphism $\Lambda_{V,W}:V\otimes_{R^{op}}W \rightarrow W\otimes _{R}V$ that maps $v\otimes _{R^{op}}w$ to $w\otimes _{R}v$ is an isomorphism. If $(A,m,u)$ is an $R$-ring, then the multiplication and the unit of the \emph{opposite $R^{op}$-ring} $A^{op}$ are the maps $m^{op}:=m\circ \Lambda_{A,A}$ and $u$, respectively.
\end{fact}

\begin{fact}[$R$-corings.]
  An $R$-\emph{coring} is a coassociative and counital coalgebra in $_{R}\mathcal{M}_{R}$. Thus, an $R$-coring is an $R$-bimodule $C$ together with a coassociative \emph{comultiplication }$\Delta :C\rightarrow C\otimes C$ and a \emph{counit} $\varepsilon:C\rightarrow R,$ which are morphisms in $_{R}\mathcal{M}_{R}$.  An $R$-coring $(C,\Delta ,\varepsilon )$ is said to be \emph{graded} if, in addition, $C$ is the direct sum of a family $\{C_n\}_{n\in\mathbb{N}}$ of sub-bimodules, such that the counit vanishes on $C_m$, for any $m>0$, and 
  \[
    \Delta(C_{n})\subseteq \bigoplus_{p=0}^{n}C_{p}\otimes C_{n-p}. 
  \] 
  The comultiplication of a graded coring  is defined by a family of $R$-bilinear maps $\Delta_{p,q}:C_{p+q}\rightarrow C_{p}\otimes C_{q}.$ In the graded case we shall use a special form of Sweedler notation, namely $\Delta _{p,q}(c)=\sum\limits c_{(1,{p})}\otimes c_{(2,{q})}.$ Therefore, in a graded coring, coassociativity is equivalent to the following relations
\begin{equation}
  \sum\limits {c_{(1,p+q)}}_{(1,p)}\otimes {c_{(1,{p+q})}}_{(2,q)}\otimes c_{(2,{r})}=\sum\limits c_{(1,{p})}\otimes {c_{(2,{q+r})}}_{(1,{q})}\otimes {c_{(2,{q+r})}}_{(2,{r})},  \label{c1}
\end{equation}%
  where $p,q,r$ are arbitrary non-negative integers and $c\in C_{p+q+r}$. For short, we shall write the sums from equation  \eqref{c1} as $\sum\limits c_{(1,p)}\otimes c_{(2,q)}\otimes c_{(3,r)}$.  The counit satisfies the relations
\begin{equation}
  \sum\limits \varepsilon (c_{(1,{0})})c_{(2,{n})}=c=\sum\limits c_{(1,{n})}\varepsilon (c_{(2,{0})}),  \label{c2}
\end{equation}%
  for every $c\in C_{n}$. By definition, a graded $R$-coring $C$ is \emph{connected } if $C_{0}$ is isomorphic as an $R$-bimodule to $R$.  Thus, $\Delta_{0,0}$ and the restriction of $\varepsilon $ to $C_{0}$ are uniquely determined by
\begin{equation*}
  \Delta_{0,0}(c_0)=r_0 c_0\otimes c_0\quad\text{and}\quad\varepsilon(c_0)=r^{-1}_0,
\end{equation*}
  where $c_0$ is a certain element in $C_0$ that commutes with all $r\in R$, and $r_0$ is an invertible element in the center of $R$. In this paper we shall	always assume that $c_0$ is a group-like element, that is $r_0=1.$ For such a connected coring we shall identify $C_0$ with $R$ via the counit of $C$. In conclusion, without loss in generality, we may assume that $C_0=R$ and that the restriction of the counit to the zero degree component is the identity map of $R$. Of course, the counit vanishes on all other homogeneous components of $C$. It is easy to see that for any $c\in C_{n}$ we have
\begin{equation*}
  \Delta _{0,n}(c)=1\otimes c \quad\text{and}\quad\Delta _{n,0}(c)=c\otimes 1.
\end{equation*}
  For any graded  $R$-coring $(C,\Delta ,\varepsilon )$ the comultiplication $\Delta$ factors through a map $\overline{\Delta }:\overline{C}\rightarrow \overline{C}\otimes\overline{C}$, where $\overline{C}:=C/C_{0}$.  Note that $\overline{\Delta }$ is coassociative. Let $p_{C}:C\to\overline{C}$ and $\pi_n^C:C\to C_n$ denote the canonical projections. The comultiplication and the counit of the \emph{opposite $R^{op}$-coring} $C^{op}$ are the bimodule maps $\Delta ^{op}:=\Lambda_{C,C}^{-1}\circ \Delta $ and $\varepsilon$, respectively.
  
  For a connected  $R$-coring $C$ one defines the maps $\Delta (n):C_{n}\rightarrow C_{1}^{\otimes n}$ by setting $\Delta (1):={\Id}_{C_{1}}$ and then using the recursive relation 
\begin{equation}\label{ec:D(n)}
  \Delta (n)=\left({\Id}_{C_{1}}\otimes \Delta (n-1)\right)\circ \Delta_{1,n-1}.
\end{equation}%
  A graded coring $C$ is said to be \emph{strongly graded} if and only if it is connected and $\Delta(n)$ is injective for all $n>0$. By induction, it follows that $\Delta(p+q)=(\Delta (p)\otimes \Delta (q))\circ \Delta _{p,q}$, hence $C$ is strongly graded if and only if $\Delta _{p,q}$ is injective for all $p$ and $q$. To check that a coring is strongly graded it is enough to prove that $\Delta _{1,n}$ is injective for all $n$ or, equivalently, that $\Delta _{n,1}$ is injective for all $n$.    
\end{fact}

\begin{fact}[Almost-Koszul pairs.]\label{de:quasi-Koszul}
  We can now introduce one of the main tools that we shall use to investigate Koszul rings. An \emph{almost-Koszul pair} $(A,C)$ consists of  a connected $R$-ring $A$ and  a connected $R$-coring $C$, together with an  isomorphism of $R$-bimodules $\theta_{C,A}:C_{1}\rightarrow A^{1}$ that satisfies the relation  \eqref{ec:almost-koszul-intro}. 
  Using Sweedler notation this is equivalent to the fact that,  for any $c\in C_{2}$,  
\begin{equation}\label{ec:almost-koszul}
  \sum\limits\theta _{C,A}(c_{(1,{1})})\theta _{C,A}(c_{(2,{1})})=0.
\end{equation}
\end{fact}

\begin{remark}\label{re:Opposite _pair}
  Let $(A,C)$ be an almost-Koszul pair. We have noticed that $A^{op}$ and $C^{op}$ are an $R^{op}$-ring and an $R^{op}$-coring, respectively. Obviously, $(A^{op},C^{op})$ is an almost-Koszul pair over the ring $R^{op}$, with respect to $\theta_{C^{op},A^{op}}:=\theta_{C,A}$, regarded as an $R^{op}$-bimodule map in the canonical way.
\end{remark}

\begin{fact}[The normalized bar resolution of $R$ (as a right $A$-module).]  \label{fa:bar_resolution}
  We now want to show that, for every strongly graded $R$-ring $A$, there is a canonical graded coring $C$ such that $(A,C)$ is almost-Koszul. By  \cite[p. 283]{We}, the groups $\tor_{\ast }^{A}(R,R)$ may be computed using the \emph{normalized right bar resolution} $\beta_{\ast}^{\,r}(A)$, that is the exact sequence
\begin{equation}\label{ec:bar-resolution}
  0\longleftarrow R\overset{\delta_{0}}{\longleftarrow }A \overset{\delta_1}{\longleftarrow}\overline{A}\otimes A\longleftarrow\cdots \longleftarrow \overline{A}^{\,\otimes n-1}\otimes A\overset{\delta_{n}}{\longleftarrow}\overline{A}^{\,\otimes n}\otimes A\longleftarrow \cdots,
\end{equation}%
  where $\delta_{0}=\pi _{A}^{0}$ and, for $n>0$, the arrows are given by
\begin{equation*}
  \delta_{n}(a_{1}\otimes \cdots \otimes a_{n}\otimes a_{n+1}):=\sum\limits\limits_{i=1}^{n}(-1)^{i}a_{1}\otimes \cdots \otimes a_{i}a_{i+1}\otimes \cdots \otimes a_{n+1}.
\end{equation*}%
  Note that, since $R$ is semisimple, $\overline{A}^{\,\otimes n}\otimes A$ is a projective right $A$-module. Hence $\tor_{\ast }^{A}(R,R)$ is the homology of the \emph{normalized bar complex }$(\Omega _{\ast }(A),\partial_{\ast })$
\begin{equation}\label{ec:Complex-Tor}
  0\longleftarrow \Omega _{0}(A)\overset{\partial _{1}}{\longleftarrow }\Omega_{1}(A) \longleftarrow\cdots \longleftarrow \Omega _{n-1}(A)\overset{\partial _{n}}{\longleftarrow}\Omega _{n}(A)\longleftarrow \cdots ,
\end{equation}%
  where $\Omega _{0}(A)=R$ and $\Omega _{n}(A):=\overline{A}^{\,\otimes n},$ for $n>0$. The differential map $\partial _{1}$ is zero, and $\partial _{n}$ is the restriction of $\delta_{n-1}$ to $\Omega_n(A)\subseteq \overline{A}^{\,\otimes n-1}\otimes A$. We shall use the notation
\begin{equation*}
  T_{n}(A):=\h_{n}(\Omega _{\ast }(A))\quad \text{and\quad }T(A):= \textstyle\bigoplus\limits_{n\in \mathbb{N}}T_{n}(A).
\end{equation*}%
  The homology class of $x\in \overline{A}$ in $T_{1}(A)$ will be denoted by $ [x]$.

  The \emph{normalized left bar resolution} is defined by $\beta{}_{\ast}^{\,l}(A):=\beta{}_{\ast }^{\,r}(A^{op})$. Note that $\beta{}_{n}^{\,l}(A)\cong A\otimes \overline{A}^{\,\otimes n}$.
\end{fact}

\begin{fact}[The $R$-corings $T_{R}^{c}(V)$ and $\Omega(A)$.]\label{fa:Tc}
  Let $V$ denote an $R$-bimodule. On $T_{R}^{c}(V):=\oplus_{n\in\mathbb{N}}V^{\otimes n}$ one defines a graded $R$-coring structure such that $\Delta _{p,q}$ is the isomorphism $V^{\otimes p+q}\cong V^{\otimes p}\otimes V^{\otimes q}.$ The counit of $T_{R}^{c}(V)$ is the projection onto $V^{\otimes 0}:=R$, the zero degree homogeneous component of $T_{R}^{c}(V)$.
\end{fact}

  The  coring $\Omega(A):=T_{R}^{c}(\overline{A})$ will play an important role in our work. Let us first show that $\Omega(A)$ is a chain coring, \emph{i.e.} it is a coalgebra in the tensor category of chain complexes of $R$-bimodules.
	
\begin{lemma}\label{le:T(A)}
  The comultiplication and the counit of  $\Omega(A)$ are chain maps. In particular, $T(A)$ is a connected $R$-coring, and $T_{1}(A)$ is the cokernel of $\overline{m}:\Omega _{2}(A)\rightarrow \Omega _{1}(A).$
\end{lemma}

\begin{proof}
  By definition, the $n$-chains set of $ \Omega_{\ast }(A)\otimes \Omega_{\ast }(A)$ is the $R$-bimodule $\bigoplus_{p=0}^{n}\Omega_{p}(A)\otimes \Omega_{n-p }(A)$, while the restriction of the differential $d_n$ to $\Omega_{p}(A)\otimes \Omega_{n-p}(A)$ is given by 
\begin{equation*}
  d_{n}=\partial_{p}\otimes \Id_{\Omega_{n-p}(A)}+(-1)^{p} \Id_{\Omega_{p}(A)} \otimes \partial_{n-p}.
\end{equation*}%
  Proceeding as in \cite[Chapter 1.1]{PP}  one shows that the comultiplication of $\Omega(A)$ is a chain map. Clearly the counit is a chain map, regarding  $R$  as a complex concentrated in degree zero. In conclusion, $\Omega (A)$ is a chain $R$-coring and $T(A)$ is a graded $R$-coring, as the homology of a chain coring always inherits a graded coring structure. Since $\Omega_0(A)=R$, the map $\partial_{1}$ is trivial and  $\partial_{2}=\overline{m}$, it follows that $T(A)$ is connected and $T_{1}(A)=\coker\overline{m}$. 
\end{proof}

\begin{proposition}\label{pr:Tor}
  If $A$ is a connected strongly graded $R$-ring, then $(A,T(A))$ is almost-Koszul.
\end{proposition}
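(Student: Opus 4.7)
The plan is to first construct the isomorphism $\theta_{T(A),A}\colon T^1(A)\to A^1$, and then verify \eqref{ec:pre-koszul} by a direct computation on a cycle representing an arbitrary class in $T^2(A)$.

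For the first step, Lemma \ref{le:T(A)} already guarantees that $T(A)$ is a connected graded $R$-coring with $T^1(A)=\operatorname{coker}\overline{m}$. Since $A$ is generated in degree one, the map $\overline{m}\colon\overline{A}\otimes\overline{A}\to\overline{A}$ surjects onto $\bigoplus_{n\ge 2}A^n$, while its restriction to internal degree $1$ is zero because $(\overline{A}\otimes\overline{A})^1=0$. Hence the canonical projection $\overline{A}\to T^1(A)$ factors through $\pi_A^1\colon\overline{A}\to A^1$ and realises $\theta_{T(A),A}$ as a bimodule isomorphism characterised by $\theta_{T(A),A}([a])=\pi_A^1(a)$ for $a\in\overline{A}$.

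For the second step, I would unwind the coring structure on $T(A)$ in bidegree $(1,1)$. By the construction in the proof of Lemma \ref{le:T(A)}, the comultiplication on $\Omega(A)$ is induced by the canonical identifications $\overline{A}^{\otimes(p+q)}\simeq\overline{A}^{\otimes p}\otimes\overline{A}^{\otimes q}$, so in bidegree $(1,1)$ it is simply the identity of $\overline{A}\otimes\overline{A}$. Therefore, if a class $[z]\in T^2(A)$ is represented by a cycle $z=\sum z_{(1)}\otimes z_{(2)}\in\ker\overline{m}$, one obtains $\Delta^{1,1}([z])=\sum[z_{(1)}]\otimes[z_{(2)}]$, and applying $\theta_{T(A),A}\otimes\theta_{T(A),A}$ followed by $m^{1,1}$ yields $\sum\pi_A^1(z_{(1)})\cdot\pi_A^1(z_{(2)})\in A^2$.

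This last expression is exactly $\overline{m}$ applied to the $A^1\otimes A^1$-component of $z$. Because $\overline{m}$ respects the internal grading inherited from $A$, the condition $\overline{m}(z)=0$ forces every internal-degree homogeneous piece of $z$ to be annihilated by $\overline{m}$; in particular its $A^1\otimes A^1$-component is, so the displayed sum vanishes, establishing \eqref{ec:pre-koszul}. I do not anticipate a serious obstacle; the main point requiring care is to distinguish the two gradings carried by $T(A)$---the homological one (used in the \textbf{pre-Koszul} condition) and the internal one inherited from $A$---and to verify that the identity comultiplication on $\Omega(A)$ in bidegree $(1,1)$ descends to the formula above on $T(A)$.
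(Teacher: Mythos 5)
Your proposal is correct and follows essentially the same route as the paper: the isomorphism $\theta_{T(A),A}$ is obtained from $\pi_A^1$ via $T^1(A)=\overline{A}/\overline{A}^2=A^1$, and condition \eqref{ec:pre-koszul} is checked by representing a class in $T^2(A)$ by a cycle in $\Ker\overline{m}$ and extracting the internal-degree-two component, which is exactly the paper's observation that $x_iy_i-\pi^1(x_i)\pi^1(y_i)$ lies in degrees greater than two. Your explicit remark distinguishing the homological and internal gradings is a helpful clarification but does not change the argument.
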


\begin{proof}
  We have proved that $T(A)$ is connected. Since $A$ is strongly graded, $T_{1}(A)=\overline{A}/\overline{A}^{\,2}=A^1$ and the projection $\pi _{A}^{1}$ induces an $R$-bimodule isomorphism $\theta _{T(A),A}:T_{1}(A)\to A^{1}.$ Every $\omega \in T_{2}(A)$ is the homology class of a certain $\zeta \in \Ker\overline{m} $. Hence $\zeta=\sum_{i=1}^{n}x_{i}\otimes y_{i},$ for some $x_{1},\dots ,x_{n}$ and $y_{1},\dots ,y_{n}$ in $\overline{A}$ that satisfy the equation $\sum_{i=1}^{n}x_{i}y_{i}=0.$ Note that  $x_{i}y_{i}-\pi _{A}^{1}\left( x_{i}\right) \pi_{A}^{1}\left( y_{i}\right) $ belongs to $\sum_{n>2}A^{n}$ and
\begin{equation*}
  \Delta _{1,1}(\omega )=\sum\limits_{i=1}^{n}\left[ x_{i}\right]\otimes\left[ y_{i}\right].
\end{equation*}%
  Thus, the relation \eqref{ec:almost-koszul} follows by the following computation
\begin{equation*}
  \sum\limits\limits_{i=1}^{n}\theta _{T(A),A}(\left[ x_{i}\right])\theta_{T(A),A}(\left[y_{i}\right])=\sum\limits\limits_{i=1}^{n}\pi_{A}^{1}\left(x_{i}\right) \pi _{A}^{1}\left( y_{i}\right) =\pi _{A}^{2}(\sum\limits\limits_{i=1}^{n}x_{i}y_{i}) =0.\qedhere
\end{equation*}
\end{proof}
  Our goal now is to associate to an almost-Koszul pair $(A,C)$ three cochain complexes: one in the category of graded left $C$-comodules and, symmetrically, one in the category of graded right $C$-comodules. By combining these constructions, we shall get the third cochain complex, that lives in the category of graded $C$-bicomodules.

\begin{fact}[The categories $\mathfrak{M}^{C}$, $^{C}\mathfrak{M}$ and $^{C}\mathfrak{M}^{C}$.]
  Let $C$ be an $R$-coring. The pair $(M,\rho ^{M})$ is a right $C$-\emph{comodule} if $M$ is a right $R$-module and $\rho^{M}:M\rightarrow M\otimes C $ is a morphisms of right $R$-modules such that, using the Sweedler notation $\rho ^{M}(m)=\sum\limits m_{\left\langle 0\right\rangle }\otimes m_{\left\langle 1\right\rangle }$, the relations below hold true for any $m\in M$
\begin{align*}
  \sum\limits {m_{\left\langle 0\right\rangle}} _{\left\langle  0\right\rangle}\otimes {m_{\left\langle 0\right\rangle}}_{\left\langle 1\right\rangle}\otimes m_{\left\langle 1\right\rangle } =\sum\limits m_{\left\langle 0\right\rangle }\otimes {m_{\left\langle 1\right\rangle}}_{(1)}\otimes {m_{\left\langle 1\right\rangle}}_{(2)} \quad\text{and}\quad 
  \sum\limits m_{\left\langle 0\right\rangle }\varepsilon \left(m_{\left\langle 1\right\rangle }\right)  =m.
\end{align*}%
  A morphism of comodules is a right $R$-linear map that commutes with the comodule structure maps. The category $\mathfrak{M}^{C}$ of right $C$-comodules is Grothendieck, as $C$ is flat as a left $R$-module \cite[p. 264]{Br}. 
	
  A right comodule $M$ is \emph{graded} if $M:=\textstyle\oplus_{n\in\mathbb{N}}M_{n}$, and $\rho ^{M}(M_{n})\subseteq\textstyle\oplus_{p=0}^{n}M_{p}\otimes C_{n-p}$, for all $n$. Then  $\rho^M$ is uniquely defined by the induced $R$-linear maps $\rho_{p,q}^{M}:M_{p+q}\rightarrow M_{p}\otimes C_{q}$. 
	
  The category $^{C}\mathfrak{M}$ of left $C$-comodules is constructed in a similar way. For a left $C$-comodule $(N,\rho ^{N})$ we use the Sweedler notation $\rho ^{N}(n)=\sum n_{\langle-1\rangle }\otimes n_{\langle 0\rangle }.$

  A $C$-\emph{bicomodule} is a triple $(M,\rho _{l}^{M},\rho_{r}^{M})$ such that $M$ is an $R$-bimodule, $(M,\rho _{l}^{M})$ is a left comodule, $(M,\rho _{r}^{M})$ is a right comodule and, for every $m\in M$,
\begin{equation*}
  \sum\limits m_{\left\langle -1\right\rangle }\otimes {m_{\left\langle 0\right\rangle}} _{\left\langle 0\right\rangle}\otimes {m_{\left\langle 0\right\rangle}}_{\left\langle 1\right\rangle }=\sum\limits {m_{\left\langle 0\right\rangle}} _{\left\langle -1\right\rangle}\otimes {m_{\left\langle 0\right\rangle}}_{\left\langle 0\right\rangle }\otimes m_{\left\langle 1\right\rangle }.
\end{equation*}%
  Note that, by definition, the structure maps $\rho _{l}^{M}$ and $\rho_{r}^{M}$ must be morphisms of $R$-bimodules, otherwise the above compatibility relation does not make sense. A morphism of $C$-bicomodules is a map which is left and right $C$-colinear. For the category of $C$-bicomodules we shall use the notation $^{C}\mathfrak{M}^{C}$.
\end{fact}

\begin{lemma}\label{le:injective}
  If $V$ is a right $R$-module, then $V\otimes C$ is an injective right $C$-comodule. A similar result holds for left $C$-comodules and $C$-bicomodules.
\end{lemma}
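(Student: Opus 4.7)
The plan is to exploit the adjunction between the forgetful functor $\mathfrak{M}^{C}\to\mathfrak{M}_R$ and the cofree functor $-\otimes C:\mathfrak{M}_R\to\mathfrak{M}^C$, combined with the semisimplicity of $R$.

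First, I would equip $V\otimes C$ with the right $C$-comodule structure $\rho_{V\otimes C}:=V\otimes\Delta$, checking coassociativity and counitality directly from the coassociativity and counitality of $\Delta$. This is routine and uses only the axioms of an $R$-coring already recalled in the paper.

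The core of the proof is the natural isomorphism
\begin{equation*}
\Phi_M:\operatorname{Hom}^{C}(M,V\otimes C)\longrightarrow \operatorname{Hom}_{R}(M,V),\qquad \Phi_M(f):=(V\otimes\varepsilon)\circ f,
\end{equation*}
for every right $C$-comodule $M$. Its inverse is given by $\Psi_M(g):=(g\otimes C)\circ\rho_M$. Checking that $\Psi_M(g)$ is $C$-colinear uses the coassociativity of $\rho_M$ (written in $\Sigma$-notation in the excerpt); checking $\Phi_M\circ\Psi_M=\operatorname{id}$ uses the counit axiom $\sum m_{\langle 0\rangle}\varepsilon(m_{\langle 1\rangle})=m$; checking $\Psi_M\circ\Phi_M=\operatorname{id}$ uses the colinearity of $f$ together with the counit of $C$. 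This is a standard verification, so I would sketch it but not write every diagram.

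Having the adjunction, I would conclude as follows. Since $R$ is semisimple, every right $R$-module is injective in $\mathfrak{M}_R$, so the functor $\operatorname{Hom}_R(-,V)$ is exact. Any short exact sequence of right $C$-comodules is in particular a short exact sequence of right $R$-modules (the forgetful functor is exact because $\mathfrak{M}^C$ is Grothendieck with kernels and cokernels computed in $\mathfrak{M}_R$, using that $C$ is flat over $R$, as recalled in the paper). Hence $\operatorname{Hom}^{C}(-,V\otimes C)\cong\operatorname{Hom}_R(-,V)$ is exact as a functor on $\mathfrak{M}^C$, which is exactly the injectivity of $V\otimes C$ as a right $C$-comodule.

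Finally, the statement for left comodules follows by passing to the opposite coring: a left $C$-comodule is the same as a right $C^{op}$-comodule, and if $V$ is a left $R$-module, then it is a right $R^{op}$-module, so $C\otimes V\cong V\otimes_{R^{op}} C^{op}$ is injective in $\mathfrak{M}^{C^{op}}={}^{C}\mathfrak{M}$ by the case already proved. The only step I would call mildly delicate is the naturality and inverse check for $\Phi$; everything else is formal once the semisimplicity of $R$ is invoked.
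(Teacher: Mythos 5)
Your argument is correct and is essentially identical to the paper's own proof: both rest on the natural isomorphism $\mathrm{Hom}^{C}(M,V\otimes C)\cong\mathrm{Hom}_{R}(M,V)$ (the paper writes it in the direction $f\mapsto(f\otimes\mathrm{Id}_{C})\circ\rho$, you write its inverse) together with the exactness of $\mathrm{Hom}_{R}(-,V)$ coming from the semisimplicity of $R$. The extra details you supply (verification of the adjunction, exactness of the forgetful functor via flatness of $C$, and the reduction of the left-comodule case to $C^{op}$) are all consistent with what the paper leaves implicit.
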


\begin{proof}
  For any $C$-comodule $(M,\rho ^{M})$ the natural transformation
\begin{equation}\label{ec:Theta_M}
  \Theta _{M,V}:\Hom_{R}(M,V)\rightarrow \Hom^{C}(M,V\otimes C),\quad\Theta_{M,V}(f):=(f\otimes {\Id}_{C})\circ \rho ^{M}
\end{equation}%
  is an isomorphism. The inverse of $\Theta_{M,V}$ maps  $g\in\Hom^C(M, V\otimes C)$ to $\left(\Id_V\otimes \varepsilon \right)\circ g.$ Let $U$ denote the functor that associates to a right $C$-comodule the underlying $R$-module structure, forgetting the coaction. Therefore the functors $\Hom^{C}(-,V\otimes C)$ and $\Hom_{R}(-,V)\circ U$ are isomorphic.  As $R$ is semisimple, $\Hom_{R}(-,V)\circ U$ is exact. Thus $\Hom^{C}(-,V\otimes C)$ is exact as well meaning that $V\ot C$ is injective.
  
  For any injective $R$-bimodule $W$ the functorial isomorphism
\begin{equation*}
  \Theta' _{M,W}:\Hom_{R,R}(M,W)\rightarrow \Hom^{C-C}(M,C\ot W\otimes C),\quad\Theta'_{M,W}(f):=(\Id_C\ot f\otimes {\Id}_{C})\circ (\rho_l^M\ot\Id_C)\circ  \rho_r ^{M}
\end{equation*}
	can be used as above to show that $C\ot W\ot C$ is injective as a bicomodule. 
\end{proof}

\begin{fact}[The complexes $\mathrm{K}_{l}^{\ast }(A,C)$ and $\mathrm{K}_{r}^{\ast }(A,C).$] \label{fa:K^*}
  Let $(A,C)$ be an almost-Koszul pair. We set
\begin{equation*}
  \K_{l}^{-1}(A,C):=R	\qquad\text{and}\qquad\K_{l}^{n}(A,C):=C\otimes A^{n},
\end{equation*}%
  for any $n\geq 0$. We regard $R$ as a graded left $C$-comodule with respect to the trivial coaction. On the other hand,  it is easy to see that $\Delta \otimes \Id_{A^{n}}$ defines a graded $C$-comodule structure on $\K_{l}^{n}(A,C)$, whose homogeneous component of degree $p$ is $C_{p-n}\otimes A^{n}$ (by convention, $C_{k}=0$ for any $k<0$). The differential maps $d_{l}^{n}:\K_{l}^{n}(A,C)\rightarrow \K_{l}^{n+1}(A,C)$ are defined as follows. If $n=-1$, then we take $d_l^{n}$ to be the canonical bimodule morphisms $R\to C\otimes A^0$ that maps $1$ to $1\ot 1\in C_0\otimes A^0$. For $n\geq 0$ the map $d_l^n$ vanishes on $C_{0}\otimes A^{n}$ and, if $p>0$ and $c\otimes a\in C_{p}\otimes A^{n}$, then
\begin{equation*}
  d_{l}^{n}(c\otimes a):=\sum\limits c_{(1,{p-1})}\otimes \theta_{C,A}(c_{(2,{1})})a.
\end{equation*}%
  Obviously, $d_{l}^{n}$ respects the gradings on $\K_{l}^{n}(A,C)$ and  $\K_{l}^{n+1}(A,C)$.

  Recall that $(A^{op},C^{op})$ is an almost-Koszul pair (over $R^{op}$), see Remark \ref{re:Opposite _pair}. Hence we may consider
\begin{equation*}
  \K_{r}^{\ast }(A,C):=\K_{l}^{\ast }(A^{op},C^{op}).
\end{equation*}%
  For any $n\geq 0$, in view of the isomorphism $C^{op}\otimes_{R^{op}}(A^{op})^{n}\cong A^{n}\otimes _{R}C,$ we identify $\K_{r}^{n}(A,C)$ and $A^{n}\otimes _{R}C$. Through this identification the  differential of $\K_{l}^{\ast }(A^{op},C^{op})$ in degree $n$ corresponds to the map $d_{r}^{n}:A^{n}\otimes C\rightarrow A^{n+1}\otimes C$ which is zero on $A^{n}\otimes C_{0}$ and, for $p>0$ and $a\otimes c\in A^{n}\otimes C_{p},$ is given by
\begin{equation*} 
  d_{r}^{n}(a\otimes c)=\sum\limits a\theta _{C,A}(c_{(1,{1})})\otimes c_{(2,{p-1})}.
\end{equation*}
  Note that $d_r^{-1}:R\to A^0\ot C$ is uniquely defined by the relation $d_r^{-1}(1)=1\ot 1$. 
\end{fact}

\begin{proposition}\label{pr:GradedComodules}
  If $(A,C)$ is an almost-Koszul pair, then $(\K_{l}^{\ast}(A,C),d_{l}^{\ast})$ and $(\K_{r}^{\ast}(A,C),d_{r}^{\ast})$ are cochain complexes of graded  $C$-comodules (left and right, respectively).
\end{proposition}

\begin{proof}
  Clearly $d_{l}^{0}\circ d_{l}^{-1}=0$, as $d_l^0$ vanishes on $C_0\ot A^0$. Let us show that $d_{l}^{n+1}\circ d_{l}^{n}(c\otimes a)=0,$ for any $n\geq 0$ and $c\otimes a\in C_{p}\otimes A^{n}$. We may assume that $p\geq 2,$ otherwise the relation is trivially satisfied, as $d_l^k(1\ot a)=0$, for any $a\in A$ and $k>0$. Let $\theta :=\theta _{C,A}$. Coassociativity and the relation  (\ref{ec:almost-koszul}) imply
\begin{align*}
  \left( d_{l}^{n+1}\circ d_{l}^{n}\right) (c\otimes a) & =\sum\limits {c_{(1,{p-1})}}_{(1,{p-2})}\otimes \theta ( {c_{(1,{p-1})}}_{(2,{1})}) \theta (c_{(2,{1})})a
	\\
  & =\sum\limits c_{(1,{p-2})}\otimes \theta ( {c_{(2,{2})}}_{(1,{1})}) \theta( {c_{(2,{2})}}_{(2,{1})}) a=0.
\end{align*}%
  Hence $\K_{l}^{\ast}(A,C)$ is a complex. Let us prove that the maps $d_l^n$ are morphisms of $C$-comodules. For $n=-1$ we have nothing to show. Let $n\geq 0$ and $p>0$. For  $c\otimes a\in C_{p}\otimes A^{n},$ we have
\begin{equation*}
  (\Id_C\otimes d_{l}^{n})(\rho (c\otimes a))=\sum\limits\limits_{r=0}^{p}c_{(1,{r})}\otimes d_{l}^{n}(c_{(2,{p-r})}\otimes a)=\sum\limits\limits_{r=0}^{p-1}\sum\limits c_{(1,{r})}\otimes {c_{(2,p-r)}}_{(1,p-r-1)}\otimes \theta ({c_{(2,{p-r})}}_{(2,1)}) a.
\end{equation*}%
  For the second identity we used that $\Delta_{p,0}(c)=c\ot 1$ and the fact  that $d_l^n$ vanishes on $C_0\ot A^n$. Since the comultiplication of $C$ is coassociative we get
\begin{equation*}
  \rho (d_{l}^{n}(c\otimes a))=\rho ( \sum\limits c_{(1,{p-1})}\otimes \theta (c_{(2,{1})})a)  =\sum\limits_{u=0}^{p-1}\sum {c_{(1,{p-1})}}_{(1,{u})}\otimes {c_{(1,{p-1})}}_{(2,{p-1-u})}\otimes \theta (c_{(2,{1})})a.
\end{equation*}%
  Thus $(\Id_C\otimes d_{l}^{n})(\rho (c\otimes a))=\rho (d_{l}^{n}(c\otimes a))$. We conclude the proof that $d_l^n$ is a morphism of $C$-comodules, by remarking that the above relation trivially holds for $c\ot a\in C_0\ot A^n$, as $d_l^n(c\ot a)=0$.
\end{proof}

\begin{fact}[The complex $\mathrm{K}^{\ast}(A,C).$]
  We are going to construct a cochain complex $(\K^{\ast }(A,C),d^{\ast })$ in the category of $C$-bicomodules. By definition, $\K^{-1}(A,C):=C$ and $\K^{n}(A,C)=C\otimes A^{n}\otimes C$. The differential maps are defined by the relations ${d}^{-1}:=\Delta$ and
\begin{equation}\label{ec:def d}
	d^{n}=d_{l}^{n}\otimes \Id_C+(-1)^{n+1}\Id_C\otimes d_{r}^{n},
\end{equation}%
  for $n\geq 0$. Since $d_{l}^{n}$ is left $C$-colinear and $d_{r}^{n}$ is right $C$-colinear it follows that $d^{n}$ is a morphism of  bicomodules. It is not difficult to show that $(d^0\circ d^{-1})(c)=0$, for any $c\in C_p$. Indeed, for $p=0$ the relation is obvious. Let us assume that $p>0$. Thus, $d^{-1}(c)=\sum_{i=0}^p c_{(1,i)}\ot c_{(2,p-i)}$. Henceforth, 
\[
  (d^0\circ d^{-1})(c)=\sum_{i=1}^p c_{(1,i-1)}\ot\theta_{C,A}(c_{(2,1)})\ot c_{(1,p-i)} -\sum_{i=0}^{p-1} c_{(1,i)}\ot\theta_{C,A}(c_{(2,1)})\ot c_{(1,p-i-1)}=0.
\]
Let $n\geq 0$. Since $\K_l^\ast(A,C)$ and $\K_r^\ast(A,C)$ are complexes, that is $d_{l}^{n+1}\circ d_{l}^{n}=d_{r}^{n+1}\circ d_{r}^{n}=0,$ we get
\begin{equation*}
  d^{n+1}\circ d^{n}=(-1)^{n+1}\left[ \left( d_{l}^{n+1}\otimes \Id_C\right)\circ\left( \Id_C\otimes d_{r}^{n}\right) -\left( \Id_C\otimes d_{r}^{n+1}\right)\circ \left( d_{l}^{n}\otimes \Id_C\right) \right].
\end{equation*}%
  Using the formulae that define $d_{l}^{\ast }$ and $d_{r}^{\ast }$ and the fact that the multiplication in $A$ is associative, it follows that $d^{n+1}\circ d^{n}=0.$ Hence $(\K^{\ast }(A,C),d^{\ast })$ is a complex. 
\end{fact}

\begin{fact}[The subcomplexes ${\mathrm{K}}_{l}^{\ast }(A,C,m)$ and ${\mathrm{K}}_{r}^{\ast }(A,C,m)$.] \label{fa:K(A,C,m)}
  The complex $\K_{l}^{\ast }(A,C)$ decomposes as a direct sum of subcomplexes in the category of $R$-bimodules  $\oplus_{m\in\mathbb{N}}\K_{l}^{\ast }(A,C,m)$. By definition, $\K_{l}^{\ast }(A,C,0)$ is the complex
\begin{equation*}
  0\longrightarrow R\overset{\cong }{\longrightarrow }C_{0}\otimes A^{0}\longrightarrow 0.
\end{equation*}%
  By construction, the elements of $R$ are cochains of degree $-1$. Note that $\K_l^\ast(A,C,0)$ is always exact. If $m>0$, then  $d_{l}^{n}$ maps $C_{m-n}\otimes A^{n}$ to $C_{m-n-1}\otimes A^{n+1}.$ Therefore, $\K_{l}^{n}(A,C,m):=C_{m-n}\otimes A^{n}$ define a subcomplex $\K_{l}^{\ast}(A,C,m)$ of $\K_{l}^{\ast }(A,C)$, which can be displayed as follows
\begin{equation*}
  0\longrightarrow C_{m}\otimes A^{0}\longrightarrow \cdots \longrightarrow C_{m-n}\otimes A^{n}\overset{d_{l}^{n}}{\longrightarrow }C_{m-n-1}\otimes A^{n+1}\longrightarrow \cdots \longrightarrow C_{0}\otimes A^{m}\longrightarrow 0.
\end{equation*}%
  Obviously, $\K^{\ast }_{l}(A,C)=\textstyle\oplus_{m\geq 0}\K_{l}^{\ast }(A,C,m).$ The complex $\K_{r}^{\ast }(A,C)$ admits a similar decomposition as a direct sum of subcomplexes $\oplus_{m\geq 0}\K_{r}^{\ast }(A,C,m)$.
\end{fact}

\begin{fact}[The normalized bar resolution of $R$ (as a right $C$-comodule).]
  We are going to sketch how the preceding constructions and results can be dualized. Let $C$ be an $R$-coring. We assume that $C$ is connected. Then $R$ is a right $C$-comodule with respect to the trivial coaction. Recall that $\overline{C}:=C/C_{0}$ and that $\overline{\Delta}:\overline{C}\rightarrow \overline{C}\otimes \overline{C}$ is the unique map such that $\overline{\Delta}\circ p_{C}=\left(p_{C}\otimes p_{C}\right)\circ \Delta $, where $p_{C}$ is the canonical projection. We also use the notation $\widetilde{\Delta }:=(p_{C}\otimes {\Id}_{C})\circ \Delta.$

  The \emph{right normalized bar resolution} $\beta{}_{r}^{\,\ast}(C)$ of $R$ is the exact sequence of right $C$-comodules
\begin{equation}\label{ec:Bar-C-Norm}
  0\longrightarrow R\overset{\delta{}^{\,-1}}{\longrightarrow } C\overset{\delta{}^{\,0}}{\longrightarrow }\overline{C}\otimes C\longrightarrow \cdots \longrightarrow\overline{C}^{\otimes n}\otimes C\overset{\delta{}^{\,n}}{\longrightarrow }\overline{C}^{\,\otimes n+1}\otimes C\longrightarrow \cdots,
\end{equation}%
  where $\delta^{-1}$ is the canonical inclusion and, for $n\geq 0$,
\[
  \delta{}^{\,n}=\sum_{i=1}^{n}(-1)^{i-1}{\Id}_{\overline{C}^{\otimes i-1}}\otimes \overline{\Delta}\otimes{\Id}_{\overline{C}^{\otimes n-i}\otimes  C}+(-1)^{n}{\Id}_{\overline{C}^{\otimes n}}\otimes\widetilde{\Delta }.
\]
  The normalized resolution $\beta{}_{l}^{\,\ast }(C)$ of $R$ in the category of left $C$-comodules is defined in a similar way.

We use $\beta{}_{r}^{\,\ast }(C)$ to compute $\ext_{C}^{\ast}(R,R).$ Applying the functor $\Hom^{C}(R,-)$ and using the isomorphisms $\Theta _{R,-},$ we obtain the \emph{normalized bar complex} $(\Omega^{\ast }(C),\partial^{\ast })$
\begin{equation}\label{ec:Bar-C-Complex}
  0\longrightarrow \Omega ^{0}(C)\overset{\partial^{0}}{\longrightarrow }\Omega ^{1}(C)\overset{\partial^{1}}{\longrightarrow }\cdots \longrightarrow\Omega ^{n}(C)\overset{\partial^{n}}{\longrightarrow }\Omega^{n+1}(C)\longrightarrow \cdots ,
\end{equation}%
  where $\Omega ^{0}(C)=R$ and  $\Omega ^{n}(C):=\overline{C}^{\, \otimes n}$, for each positive $n$. The differential morphisms are defined by the formulae  $\partial^{0}=0$ and, for $n>0$, 
\[
  \partial^{n}=\sum_{i=1}^{n}(-1)^{i-1}{\Id}_{\overline{C}^{\,	\otimes i-1}}\otimes \overline{\Delta}\otimes {\Id}_{\overline{C}^{\,\otimes n-i}},
\]
  We shall use the notation  $E^{n}(C):=\h^n(\Omega^\ast(C))$ and $E(C):=\textstyle\oplus_{n\in\mathbb{N}}E^{n}(C).$  Thus $E^n(C)\cong\ext_C^n(R,R)$.
\end{fact}

\begin{fact}[The $R$-rings $T_{R}^{a}(V)$ and $\Omega(C)$.]\label{fa:Ta}
  Let $V$  be an $R$-bimodule and let $T_{R}^{a}(V)$ denote the free $R$-ring generated by $V$. Therefore, $T_{R}^{a}(V):=\oplus_{n\in\mathbb{N}}V^{\otimes n}$ and the graded ring structure is defined by the bimodule isomorphisms $m^{p,q}:V^{\otimes p}\otimes V^{\otimes q}\to V^{\otimes p+q}.$ The unit of $R$ is a unit for $T_{R}^{a}(V)$ too. In particular, to every connected coring $C$ we associate a connected $R$-ring $\Omega(C):=T_{R}^{a}(\overline{C})$.  
\end{fact}
\begin{lemma}\label{le:E(C)}
  The multiplication and the unit of $\Omega (C)$ are cochain maps. Thus $E(C):=\ext_{C}^{\ast }(R,R)$ is a connected $R$-ring and $E^{1}(C)=\Ker\overline{\Delta}.$
\end{lemma}

\begin{proof}
  As in the ordinary case of algebras over a commutative ring, one  shows that the multiplication is a morphism of cochain complexes from  $\Omega^\ast(C)\otimes \Omega^\ast(C)$ to $\Omega^\ast(C)$, where the set of $n$-cochains of the former complex is $\textstyle\oplus_{p=0}^{n}\Omega^{p}(C)\otimes \Omega^{n-p}(C)$ and the differential $d^n$ on $\Omega^{p}(C)\otimes \Omega^{n-p}(C)$ is given by
\begin{equation*}
  d^{n}=\partial^{p}\otimes {\Id}_{\Omega^{n-p}(C)}+(-1)^{p}{\Id}_{\Omega^{p}(C)}\otimes \partial^{n-p}.
\end{equation*}%
  Clearly, the unit  of $\Omega (C)$ is a morphism of cochain complexes, where $R$ is regarded as a complex concentrated in degree zero. Since the cohomology of a cochain $R$-ring inherits a canonical graded $R$-ring structure, $\partial^{0}=0$ and $\partial^{1}=\overline{\Delta}$, it follows that $E(C)$ is a connected graded $R$-ring and $E^{1}(C)=\Ker\overline{\Delta}.$
\end{proof}

\begin{proposition}\label{pr:Ext}
  Let $C$ be a strongly graded $R$-coring. Then $(E(C),C)$ is an almost-Koszul pair.
\end{proposition}

\begin{proof}
  We already know that $E(C)$ is a connected $R$-ring. Let $\overline{C}_{n}:=p_C(C_{n}).$ Hence,  $\overline{C}=\textstyle \oplus_{n>0}\overline{C}_{n}$ and $p_C$ is injective on each component of positive degree. In particular, $C_{n}\cong \overline{C}_{n}.$

  Let $\theta :C_{1}\rightarrow \overline{C}_{1}$ denote the restriction of $ p_{C}$ to $C_{1}.$ Since $\Delta _{0,1}(c)=1\otimes c$ and $\Delta_{1,0}(c)=c\otimes 1$ it follows that the image of $\theta $ is included into $\Ker\overline{\Delta }=E^{1}(C),$ so we may regard $\theta $ as a map from $C_1$ to $E^{1}(C).$ We claim that the pair $(E(C),C)$ satisfies the identity (\ref{ec:almost-koszul}) with respect to $\theta_{C,E(C)}:=\theta$. If $B^{2}(C)$ is the group of $2$-coboundaries in the normalized bar complex $\Omega ^{\ast }(C)$ and $c\in C_{2},$ then
\begin{equation*}
  \sum\limits \theta (c_{(1,{1})})\cdot \theta (c_{(2,{1})}^{{}})=\sum\limits p_{C}(c_{(1,{1})})\otimes p_{C}(c_{(2,{1})})+B^{2}(C)=\overline{\Delta }(p_{C}(c))+B^{2}(C)=\partial^{1}(p_{C}(c))+B^{2}(C)=0.
\end{equation*}%
  Note that the first equality is a consequence of the definition of  multiplication in $E(C),$ while for the second identity we used the definition of $\overline{\Delta }$ and the relations $\Delta_{0,2}(c)=1\otimes c$ and $\Delta _{2,0}(c)=c\otimes 1.$ It remains to prove that $\theta $ is bijective. Let $p_{C}(c)$ be an element in $\Ker\overline{\Delta }$ and let $c=\sum_{n=1}^{d}c_{n}$, where $c_{n}\in C_{n}$. We claim that $c_{n}=0$ for $n\geq 2.$ We have
\begin{equation*}
  \sum_{n=2}^{d}\sum\limits_{r=1}^{n-1}(p_{C}\otimes p_{C})(\Delta_{r,n-r}(c_{n}))=\sum\limits_{n=0}^{d}\sum\limits_{r=0}^{n}(p_{C}\otimes p_{C})(\Delta _{r,n-r}(c_{n}))=(p_{C}\otimes p_{C})\left( \Delta \left(c\right) \right) =\overline{\Delta }(p_{C}(c))=0.
\end{equation*}%
  We fix $n\geq 2.$ Since $(p_{C}\otimes p_{C})(\Delta _{r,n-r}(c_{n}))$ belongs to $\overline{C}_{r}\otimes \overline{C}_{n-r},$ in view of the foregoing remarks, we deduce that this element vanishes for every $0<r<n.$ Since $p_{C}\otimes p_{C}$ is an $R$-bimodule isomorphism between $C_{r}\otimes C_{n-r}$ and $\overline{C}_{r}\otimes \overline{C}_{n-r}$ it follows that $\Delta _{r,n-r}(c_{n})=0.$ As $C$ is strongly graded, we get that the kernel of $\overline{\Delta }$ is included into $\overline{C}_{1}.$ The other inclusion is obvious, so $\theta $ is an isomorphism.
\end{proof}

\begin{fact}[The cotensor product.]\label{cot}
	Recall that the cotensor product $N\square _{C}M$ between a right $C$-comodule $(N,\rho ^{N})$ and a left $C$-comodule $(M,\rho ^{M})$ is defined as the kernel of $\rho ^{N}\otimes {\Id}_{M}-{\Id}_{N}\otimes \rho ^{M}$.

	For any right $R$-module $V$ the tensor product $V\otimes C$ is a right $C$-comodule with respect to the coaction given by ${\Id_V} \otimes \Delta$. If  $M$ is a left $C$-comodule, then 
\begin{equation}\label{ec:Izo-Cotensor}
	\zeta :(V\otimes C)\square _{C}M\rightarrow V\otimes M,\quad \zeta(\sum\limits_{i=1}^{n}v_{i}\otimes c_{i}\otimes m_{i})=\sum\limits_{i=1}^{n}v_{i}\otimes \varepsilon (c_{i})m_{i}
\end{equation}%
	is an isomorphism. The inverse of $\zeta$ maps $v\otimes m$ to $\sum\limits v\otimes m_{\left\langle -1\right\rangle }\otimes m_{\left\langle  0\right\rangle }.$ In particular, for $V=R$, we get that $C\square_{C}M$ and $M$ are isomorphic as $R$-bimodules.

	In the case when $C$ is connected, we know that $R$ is a left and a right $C$-comodule with respect to the trivial coactions. For such a coring $C,$ there is a canonical isomorphism $R\,\square_{C}M\cong M^{coC},$ where the set of coinvariant elements $M^{coC}$ contains all $m\in M$ such that $\rho _{M}(m)=1\otimes m.$ Of course, the above discussion applies for right $C$-comodules as well.
\end{fact}

\begin{fact}[The complexes $\K_{\ast }^{l}(A,C),$ $\K_{\ast }^{r}(A,C)$ and $ \K_{\ast }(A,C).$]
	For any almost-Koszul pair $(A,C)$, by duality, we can also construct three chain complexes. We first define a complex $\K_{\ast }^{r}(A,C)$ of graded right $A$-modules. Let $\K_{-1}^{r}(A,C):=R$ and $\K_{n}^{r}(A,C):=C_{n}\otimes A$. The differential $d^r_0:C_0\ot A \to R$ maps $x\ot a$ to $x\pi_A^0(a)$. For $n>0$ one defines $d_{n}^{r}:C_{n}\otimes A\rightarrow C_{n-1}\otimes A$  by
\begin{equation*}
	d_{n}^{r}(c\otimes a)=\sum\limits c_{(1,{n-1})}\otimes \theta_{C,A}(c_{(2,{1})})a.
\end{equation*}%
	To show that $\K_\ast^r(A,C)$ is a complex one proceeds as in \S\ref{fa:K^*}, so we omit the proof.  Applying the previous construction to the opposite almost-Koszul pair $(A^{op},C^{op})$ we obtain a complex $(\K_{\ast }^{l}(A,C),d_{\ast }^{l})$ of graded left $A$-modules. Explicitly, $\K_{-1}^{l}(A,C):=R$ and $\K_{n}^{l}(A,C):=A\otimes C_{n}$. The differential $d_{0}^l$ maps $a\ot x$ to $\pi_A^0(a)x$. For $n > 0$ the maps $d_n^l$ are given by
\begin{equation*}
	d_{n}^{l}(a\otimes c)=\sum\limits a\theta _{C,A}(c_{(1,{1})})\otimes c_{(2,{n-1})}.
\end{equation*}%
	By combining the above two constructions we obtain a new complex $(\K_{\ast}(A,C),{d}_{\ast })$ in  the category  of $A$-bimodules. By definition, $\K_{-1}(A,C):=A$ and $\K_{n}(A,C):=A\otimes C_{n}\otimes A$. The map $d_0$ is induced by the multiplication of $A$ and, for $n>0$, we have
\begin{equation*}
	{d}_{n}=d_{n}^{l}\otimes \Id_A+(-1)^{n}\Id_A\otimes d_{n}^{r}.
\end{equation*}
\end{fact}

\begin{fact}[The subcomplexes $\K_{\ast }^{r}(A,C,m)$ and $\K_{\ast}^{l}(A,C,m)$.]\label{K(A,C,m)}
	The complex $\K_{\ast }^{r}(A,C)$ can be written as a direct sum of subcomplexes $\textstyle\oplus_{m\in\mathbb{N}} \K_{\ast}^{r}(A,C,m)$. In the case when $m=0$ we take $\K_{\ast}^{r}(A,C,0)$ to be the exact cochain complex
\begin{equation*}
	0\longrightarrow R\overset{\cong }{\longrightarrow }C_{0}\otimes A^{0}\longrightarrow 0.
\end{equation*}%
	If $m>0$, then $\K_{n}^{r}(A,C,m):=C_{m-n}\otimes A^{n}.$ Therefore in this case $\K_{\ast }^{r}(A,C,m)$ is the complex
\begin{equation*}
	0\longleftarrow C_{0}\otimes A^{m}\longleftarrow \cdots \longleftarrow C_{n-1}\otimes A^{m-n+1}\overset{d_{n}^{r}}{\longleftarrow }C_{n}\otimes A^{m-n}\longleftarrow \cdots\longleftarrow C_{m}\otimes A^{0}\longleftarrow 0. 
\end{equation*}%
	Analogously, $\K_{\ast }^{l}(A,C)$ decomposes as a direct sum of  subcomplexes $\K_{\ast }^{l}(A,C)\cong\oplus_{m\in\mathbb{N}}\K_{\ast}^{l}(A,C,m)$.
\end{fact}

\begin{proposition}\label{pr:Inv-K(A,C)}
	Let $(A,C)$ be an almost-Koszul pair. Then the following statements are true.
\begin{enumerate}
	\item There are canonical isomorphisms $\K_{\ast }(A,C)\otimes _{A}R\cong\K_{\ast }^{l}(A,C)$ and $R\otimes _{A}\K_{\ast }(A,C)\cong\K_{\ast}^{r}(A,C)$. 

	\item There are canonical isomorphisms $\K^{\ast }(A,C)\,\square _{C}R\cong\K_{l}^{\ast }(A,C)$ and $R\,\square_{C}\K^{\ast }(A,C)\cong\K_{r}^{\ast}(A,C)$. 
\end{enumerate}
\end{proposition}

\begin{proof}
	We shall only prove the isomorphisms for $\K_\ast^l(A,C)$ and $\K^\ast_l(A,C)$. The other two isomorphisms can be proved in a similar way.
	
	The relation ${\xi }_{-1}(r)=1\otimes_{A}r$ defines a left $A$-module bijective map ${\xi }_{-1}:R\to A\otimes _{A}R$. For $n\geq 0$, let $\xi _{n}:\K_{n}^{l}(A,C)\to \K_{n}(A,C)\otimes _{A}R$ be the left $A$-module isomorphism $\xi _{n}(a\otimes c)=\left( a\otimes c\otimes 1\right) \otimes _{A}1. $ One can prove easily that $(d_0\ot_A\Id_R)\circ\xi_0=\xi_{-1}\circ d^l_0$. Furthermore, for $n>0$,
\begin{align*}
	\left[ \left( d_{n}\otimes _{A}\Id_R\right) \circ \xi _{n}\right](a\otimes c)=&\sum\limits \left( a\theta _{C,A}(c_{(1,{1})})\otimes c_{(2,{n-1})}\otimes 1\right) \otimes _{A}1+
			\\
	& +(-1)^{n}\sum\limits \left( a\otimes c_{(1,{n-1})}\otimes \theta_{C,A}(c_{(2,{1})})\right) \otimes _{A}1. 
\end{align*}%
	Since the left $A$-module structure of $R$ is induced by the algebra morphism $\pi _{A}^{0}:A\rightarrow R$ and $\theta _{C,A}(c_{(2,{1})})$ is an element in $A^{1},$ it follows that the  second sum in the right-hand side of the above relation is zero. Hence $\xi _{\ast }$ is a morphism of chain complexes between $\K_{\ast }^{l}(A,C)$ and $\K_{\ast }(A,C)\otimes _{A}R$, as 
\begin{equation*}
	\left[ \left( d_{n}\otimes _{A}\Id_R\right) \circ \xi _{n}\right] (a\otimes c)=\sum\limits \left( a\theta _{C,A}(c_{(1,{1})})\otimes c_{(2,{n-1})}\otimes 1\right) \otimes _{A}1=\left( \xi _{n-1}\circ d_{n}^{l}\right)(a\otimes c).
\end{equation*}
	
	In order to prove the second part of the proposition, we define $\xi^{-1}:R\to C\,\square_C R$ by ${\xi}{}^{\,-1}(r)=1\otimes r$. If $n\geq 0$, then $\xi ^{n}:\K_{l}^{n}(A,C)\rightarrow \K^{n}(A,C)\square _{C}R$ is given by $\xi ^{n}\left( c\otimes a\right) =(c\otimes a\otimes 1)\otimes 1.$ Clearly, $\xi ^{n}$ is a morphism of left $C$-comodules and it is bijective by \S \ref{cot}. To check that $\xi_\ast$ is a morphism of complexes we first notice that
\[
	(\xi^0\circ d^{-1}_l)(1)=(1\ot 1\ot 1)\ot 1=[(d^{-1}\,\square_C \Id_R)\circ\xi^0](1).
\]
	Let $n\geq 0$. Since $d_{r}^{n}(a\otimes 1)=0$, for any $a\in A^n$, we conclude the proof by the computation below
\begin{align*}
	\left[ \left( d^{n}\square _{C}\Id_R\right) \circ \xi ^{n}\right]\left(c\otimes a\right)& =\left[ d_{l}^{n}(c\otimes a)\otimes 1\right] \otimes 1+(-1)^{n+1}\left[ c\otimes d_{r}^{n}(a\otimes 1)\right] \otimes 1.\\
	&=\left[ d_{l}^{n}(c\otimes a)\otimes 1\right] \otimes 1=\left( \xi ^{n+1}\circ d_{l}^{n}\right) (c\otimes a).\qedhere
\end{align*}
\end{proof}
Let us regard the graded $R$-bimodule $A^\ast$ as a cochain complex with trivial differential maps. Let $R\longrightarrow A^\ast $ denote the augmented complex with respect to the identity of $R=A^0$ (we regard $R$ as the component of degree $-1$). The coaugmented complex $R\longleftarrow C_\ast$ is defined similarly.
\begin{proposition}\label{pr:KisoC}
	Let $(A,C)$ be an almost-Koszul pair.

\begin{enumerate}
	\item The complexes $R\otimes _{A}\K_{\ast }^{l}(A,C)$ and $\K_{\ast}^{r}(A,C)\otimes _{A}R$ are isomorphic to $R\longleftarrow C_\ast$.

	\item The complexes $\Hom^{C}(R,\K_{l}^{\ast }(A,C))$ and $\Hom^{C}(R,\K_{r}^{\ast }(A,C))$ are isomorphic to $R\longrightarrow A^\ast $. 
\end{enumerate}
\end{proposition}

\begin{proof}         
	Let $\psi_{-1}:R\to R\ot_A R$ be the canonical isomorphism. For $n\geq 0$ we define the isomorphism $\psi _{n}:C_{n}\rightarrow R\otimes _{A}\K_{n}^{l}(A,C)$ by $\psi _{n}(c)=1\otimes _{A}(1\otimes c).$ Since  $\psi_{-1}=(\Id_R\ot_A d_0^l)\circ\psi_0$ and for $n>0$ we have 
\begin{equation*}
	\left( (\Id_{A}\otimes d_{n}^{l})\circ \psi _{n}\right) (c)=1\otimes_{A}d_{n}^{l}(1\otimes c)=\sum\limits \pi_{A}^{0}\left( \theta _{C,A}(c_{{(1,{1})}})\right) \otimes_{A}(1\otimes c_{{(2,{n-1})}})=0,
\end{equation*}%
	it follows that $\psi _{\ast }$ is a morphism of complexes.
	
	Let $\psi^{-1}:R\to \Hom^C(R,R)$ be the bijective map $\psi^{-1} (r)=f_r$, where $f_r(x)=xr$. For $n\geq 0$ we define $\psi ^{\ast }:A^{n}\rightarrow \Hom^{C}(R,C \otimes A^{n}))$ by $\psi ^{n}(a)=f_{a},$ where $f_{a}$ is the unique $C$-comodule map such that $f_{a}(1)=1\otimes a.$ Obviously $\psi_n$ is bijective. It is easy to see that $\psi^{0}= \Hom^C(R,d^{-1}_l)\circ\psi^{-1}$. By the definition of $d_{l}^{n}$ we have $d_{l}^{n}(1\otimes a )=0$.  Thus $\Hom^{C}(R,d_{l}^{n})\circ \psi ^{n}=0,$ for any $n\geq 0$. In conclusion, $\psi^{\ast}$ is an isomorphisms of complexes.
	
	The other two isomorphisms can be proved in a similar way. 
\end{proof}

Let $(A,C)$ be an almost-Koszul pair. Our next goal is to compare the complexes $\K_{\ast}^{l}(A,C)$ and $\K_{r}^{\ast }(A,C)$ with the bar resolutions $\beta{}_{\ast }^{\,l}(A)$ and $\beta{}_{r}^{\,\ast }(C)$, respectively. We start by introducing some notation. For $n>0$, let $\overline{\pi}_{n}^{\,C}:\overline{C}\to C_{n}$ be the map induced by the projection $\pi _{n}^{C}:C\to C_{n}$ and let $\overline{\theta}:=\theta_{C,A}\circ\overline{\pi}^C_1$.
If $x$ is a class in $\overline{C}$ then we introduce the Sweedler notation $\overline{\Delta}(x)=\sum x_{(1)}\ot x_{(2)}.$  Note that, if $x=p_C(c)$ then $\sum x_{(1)}\ot x_{(2)}=\sum p_C(c_{(1)})\ot  p_C(c_{(2)}).$

To relate $\beta{}_{r}^{\,\ast}(C)$ and $(\K_{r}^{\ast}(A,C), (-1)^\ast d^\ast)$ we construct  the maps $\phi ^{n}:\beta^n_r(C)\to \K^n_r(A,C)$, for any $n\geq -1$. By definition, $\phi_{-1}$  and $\phi_0$  are $\Id_R$ and the canonical isomorphism $C\cong A^{0}\otimes C$, respectively. Furthermore, for $n>0$, we define $\phi^n:\overline{C}^{\,\ot n}\ot C\to A^n\ot C$ by
\begin{equation*}
	\phi ^{n}(x^1\otimes\cdots\otimes x^n\otimes c)=\overline{\theta} (x^1)\overline{\theta} (x^2) \cdots \overline{\theta}(x^n)\otimes c.
\end{equation*}%

For every $n\geq -1$ we now construct a map $\phi_n:\K_n^l(A,C)\to\beta_n^l(A)$, as follows. First, we set $\phi _{-1}:={\Id}_{R}$ and we take $\phi _{0}$ to be the canonical isomorphism $A\otimes C_{0}\cong A.$ Then, for $n>0$, we define  $\phi _{n}$ by
\begin{equation*}
	\phi _{n}(a\otimes c):=\sum\limits a\otimes\theta_{C,A} (c_{(1,{1})})\otimes\theta_{C,A} (c_{(2,{1})})\otimes \cdots \otimes \theta_{C,A}(c_{(n,{1})}),
\end{equation*}%
where in the above equation we used the Sweedler notation $\Delta(n)(c)=\sum c_{(1,{1})}\otimes c_{(2,{1})}\otimes \cdots \otimes c_{(n,{1})}.$

\begin{proposition}\label{pr:Psi}\label{pr:phi}
	If  $(A,C)$ is an almost-Koszul pair, then $\phi ^{\ast} :\beta{}_{r}^{\,\ast }(C)\to(\K_{r}^{\ast }(A,C),(-1)^{\ast }{d}{}_{r}^{\,\ast })$ and $\phi _{\ast }:\K_{\ast}^{l}(A,C)\to \beta{}_{\ast }^{\,l}(A)$
	 are morphisms of complexes.
\end{proposition}

\begin{proof}
	We need the relation $\phi^{n+1}\circ\delta{}^{\,n}=(-1)^{n}{d}{}_{r}^{\,n}\circ \phi ^{n}$, for all $n\geq -1.$ Let $x=p_C(c)$ be some class in $\overline{C}$. Since $\overline{\theta}\circ p_C=\theta\circ \pi_1^C$ and $\Delta \left( c\right) =\sum_{u,v\geq 0}c_{(1,{u})}\otimes c_{(2,{v})}$, we get
\begin{equation}
	\sum \overline{\theta}(x_{(1)}) \overline{\theta}(x_{(2)}) = \sum\limits_{u,v\geq 0}{\theta}\left(\pi_1^C(c_{(1,{u})})\right){\theta}\left(\pi_1^C(c_{(2,{v})})\right) =\sum\limits \theta(c_{(1,{1})})\theta (c_{(2,{1})})=0.  \label{theta'}
\end{equation}%
	We can now prove the claimed relation. Let ${x}^{\,1},\dots ,x^n$ be elements in $\overline{C}$. If $p>0$ and $c\in C_{p}$, then
\begin{align*}
	(\phi ^{n+1}\circ {\delta }^{n})({x}^{\,1}\otimes \cdots \otimes {x}^{\,n}\otimes c)={\sum\limits_{i=1}^{n}} \sum\limits & (-1)^{i-1}\overline{\theta}(x^{1})\cdots \overline{\theta}(x^{i-1})\overline{\theta}(x_{(1)}^{i})\overline{\theta}(x_{(2)}^{i})\overline{\theta}(x^{i+1})\cdots \overline{\theta}(x^{n})\otimes {c}{}\; +
			\\
	+&(-1)^{n}\sum\limits \overline{\theta}(x^{1})\cdots\overline{\theta}(x^{n})\overline{\theta}\left(p_C(c_{(1)})\right)\otimes {c}_{(2)}.
\end{align*}%
	The double sum is zero, by \eqref{theta'}. As $\Delta \left( c\right)=\sum_{u=0}^{p}\sum c_{(1,{u})}\otimes c_{(2,{p-u})}, $ it follows that
\begin{equation*}
	\sum\limits\overline{\theta}\left(p_C(c_{(1)})\right)\otimes {c}_{(2)}=\sum\limits \theta(c_{(1,{1})})\otimes c_{(2,{p-1})}.
\end{equation*}%
	Using this relation we get
\begin{equation*}
	(-1)^{n}({d}{}_{r}^{\,n}\circ \phi ^{n})({x}^{\,1}\otimes \cdots\otimes {x}^{\,n}\otimes c)=(-1)^{n}\sum\limits\overline{\theta}(x^{1})\cdots \overline{\theta}(x^{n})\overline{\theta}\left (p_C(c_{(1)})\right)\otimes c_{(2)}.
\end{equation*}%
	To complete the proof of the fact that $\phi^\ast$ is a morphism we remark that both $(-1)^{n}{d}{}_{r}^{\,n}\circ \phi ^{n}$ and $\phi^{n+1}\circ {\delta }{}^{\,n}$ vanish on $\overline{C}^{\,\ot n}\ot C_0$. 

	Let us now show that $\phi_\ast$ is a morphism of complexes. We have to prove that $\phi _{n-1}\circ {d}{}_{n}^{\,l}=\delta_{n}\circ \phi _{n}.$ For $n=0$ this relation is obvious. Let $a\otimes c$ be a tensor monomial in $A\otimes C_n$. In the case when $n=1$, both sides of the equation map $a\otimes c$ to $a\theta (c).$ Let us  assume that $n>1$. By \eqref{ec:D(n)}, we get 
\begin{equation*}
	(\phi _{n-1}\circ {d}{}_{n}^{\,l})(a\otimes c)=\sum\limits a\theta (c_{(1,{1})})\otimes \theta (c_{(2,{1})})\otimes \cdots \otimes \theta (c_{(n,{1})}).
\end{equation*}%
	On the other hand, by the definition of $\phi _{n}$ and $\delta_{n}$, we have
\begin{align*}
	(\delta_{n}\circ \phi _{n})(a\otimes c)=\sum\limits \;&a\theta (c_{(1,{1})})\otimes \theta (c_{(2,{1})})\otimes \cdots \otimes \theta (c_{(n,{1})})+
			\\
	& +\sum\limits_{i=1}^{n-1}\sum\limits \left( -1\right) ^{i}a\otimes \theta(c_{(1,{1})})\otimes \cdots \otimes \theta (c_{(i,{1})})\theta (c_{(i+1,{1})})\otimes \cdots \otimes \theta (c_{(n,{1})}).
\end{align*}
	By coassociativity and using the condition (\ref{ec:almost-koszul}), it follows that the double sum vanishes.
\end{proof}

\section{Koszul pairs}

	In this section we shall investigate the exactness of the (co)chain complexes that we have associated to an almost-Koszul pair $(A,C)$. Roughly speaking, we shall show that one of these complexes is exact if and only if the other five are so. Furthermore, these complexes are exact precisely when  $A$ is a Koszul $R$-ring.

\begin{lemma}\label{le:cog1}
	Let $(A,C)$ be an almost-Koszul pair.

\begin{enumerate}
	\item If $A$ is strongly graded, then $\h_{0}(\K_{\ast }(A,C))=0.$

	\item If $C$ is strongly graded, then $\h^{0}(\K^{\ast}  (A,C))=0.$
\end{enumerate}
\end{lemma}

\begin{proof}
	Let $\theta :=\theta _{C,A}.$ Since $C$ is connected we have  $A\otimes C_{0}\otimes A\cong A\otimes A.$ Via this isomorphism, one identifies ${d}_{0}$ with the multiplication $m$ of the $R $-ring $A.$ Moreover,
\begin{equation*}
	{d}_{1}(a\otimes c\otimes b)=a\theta (c)\otimes b-a\otimes \theta (c)b,
\end{equation*}%
	for any $a,b\in A$ and $c\in C_{1}.$ Let us assume that $z=\sum_{i=1}^{d}a_{i}\otimes b_{i}$ is an element in the kernel of $m.$ We have to show that $z$ is in the image of ${d}_{1}.$ Since $A$ is strongly graded and $\theta $ is bijective we may assume that $b_{i}=\theta (c_{1}^{i})\cdots \theta (c_{n_{i}}^{i}),$ for some $c_{1}^{i},\dots ,c_{n_{i}}^{i}\in C_{1}.$ It is easy to see that $z={d}_{1}(x)$, where
\begin{equation*}
	x:= -\sum\limits_{i=1}^{d}\sum\limits_{j=1}^{n_{i}}a_{i}\theta (c_{1}^{i})\cdots \theta (c_{j-1}^{i})\otimes c_{j}^{i}\otimes \theta (c_{j+1}^{i})\cdots \theta (c_{n_{i}}^{i}).
\end{equation*}
	Let us now prove the second part of the lemma. Let $\nu :C\otimes C\rightarrow C\otimes C_{1}\otimes C$ be the unique bimodule morphism such that
\begin{equation*}
	\nu |_{C_{p}\otimes C_{q}}=\Delta _{p-1,1}\otimes \Id _{C_{q}}-\Id_{C_{p}}\otimes \Delta_{1,q-1},
\end{equation*}%
	where $\Delta _{u,v}=0$ whenever $u<0$ or $v<0.$ Let us consider  the following diagram
\begin{equation*}
	\xymatrixcolsep{3pc}\xymatrix{ 0 \ar[r]^-{}& C\ar[r]^-{{d}^{-1}} \ar@{=}[d]& C\otimes A^0\otimes C \ar[r]^-{{d}^{0}}& C \otimes A^1\otimes C\\ 0 \ar[r]^-{}& C\ar[r]_-{\Delta} & C\otimes C \ar[r]_-{\nu}\ar[u]|-{\cong}& C\otimes C_1\otimes C\ar[u]|-{\Id_C\otimes\theta_{C,A}\otimes \Id_C} }
\end{equation*}%
	Since the squares are commutative and the vertical arrows are isomorphisms it is enough to prove that $\Ker\nu =\im\Delta .$ We claim that the sequence
\begin{equation*}
	C\overset{\Delta }{\longrightarrow }C\otimes C\overset{\mu}{\longrightarrow}C\otimes C\otimes C
\end{equation*}%
	is exact, where $\mu =\Delta \otimes {\Id}_{C}-{\Id}_{C}\otimes \Delta.$ Indeed, the inclusion $\im\Delta \subseteq \Ker\mu $ is trivial as the comultiplication is coassociative. Let  $s_{-1}:={\Id}_{C}\otimes \varepsilon $ and $s_{0}:={\Id}_{C}\otimes {\Id}_{C}\otimes \varepsilon $. Since
\begin{equation*}
	-s_{0}\mu +\Delta s_{-1}={\Id}_{C\otimes C},
\end{equation*}%
	any element in the kernel of $\mu $ belongs to the image of $\Delta $, so our claim has been proved. Summarizing, we must show that $\Ker\mu =\Ker\nu.$ Let $x$ be an element in $C\otimes C.$ There are finitely many non-zero elements $x_{p,q}\in C_{p}\otimes C_{q}$ such that $x=\sum_{p,q\geq 0}x_{p,q}.$ Note that $C\otimes C\otimes C=\textstyle\bigoplus_{u,v,w\geq 0}C_{u}\otimes C_{v}\otimes C_{w}.$ Computing the component of $\mu (x)$ in $C_{u}\otimes C_{v}\otimes C_{w}$, we deduce that $x$ is in the kernel of $\mu $ if and only if
\begin{equation}\label{ker(mu)}
	(\Delta _{u,v}\otimes \Id_{C_{w}})(x_{u+v,w})=({\Id}_{C_{u}}\otimes \Delta _{v,w})(x_{u,v+w}),
\end{equation}%
	for any $u,v,w\geq 0.$ Proceeding similarly, we deduce that $x$ is in the kernel of $\nu $ if and only if \eqref{ker(mu)} holds for $v=1$ and any $u,w\geq 0.$ Thus the inclusion $\Ker\mu \subseteq \Ker\nu $ is trivial.

	To prove the other inclusion we pick $x\in\Ker \nu$. Since $C$ is connected the equation (\ref{ker(mu)}) holds for $v=0$ and any  $u,w\geq 0.$ As $x$ is in the kernel of $\nu$, it remains to prove \eqref{ker(mu)} for any $v\geq 2$. By induction, let us assume that (\ref{ker(mu)}) holds for some $v\geq 1.$ By coassociativity and the induction hypothesis we have
\begin{align*}
	[({\Id}_{C_{u}}\otimes \Delta _{1,v}\otimes {\Id}_{C_{w}})\circ(\Delta_{u,v+1}\otimes {\Id}_{C_{w}})](x_{u+v+1,w}) & =[(\Delta _{u,1}\otimes {\Id}_{C_{v}}\otimes  {\Id}_{C_{w}})\circ(\Delta _{u+1,v}\otimes {\Id}_{C_{w}})](x_{u+v+1,w})
			\\
	& =[(\Delta _{u,1}\otimes {\Id}_{C_{v}}\otimes {\Id}_{C_{w}})\circ({\Id}_{C_{u+1}}\otimes \Delta _{v,w})](x_{u+1,v+w})
			\\
	& =(\Delta_{u,1}\otimes \Delta _{v,w})(x_{u+1,v+w}).
\end{align*}%
	On the other hand, using coassociativity once again and then the fact that $x\in \Ker\nu $, we get
\begin{align*}
	[({\Id}_{C_{u}}\otimes \Delta _{1,v}\otimes {\Id}_{C_{w}})\circ({\Id}_{C_{u}}\otimes\Delta_{v+1, w})](x_{u,v+1+w}) & =[({\Id}_{C_{u}}\otimes {\Id}_{C_{1}}\otimes \Delta _{v,w})\circ(\Id_{C_{u}}\otimes\Delta_{1,v+w})](x_{u,v+w+1})
			\\
	& =(\Delta_{u,1}\otimes \Delta _{v,w})(x_{u+1,v+w}).
\end{align*}%
	We conclude the proof of the lemma by remarking that ${\Id}_{C_{u}}\otimes \Delta _{1,v}\otimes {\Id}_{C_{w}}$ is  injective, since $C$ is strongly graded and any $R$-bimodule is flat as a left and a right module.
\end{proof}

\begin{proposition}\label{prop:Koszulcomplexes}\label{pr:K=exact}
	Let $(A,C)$ be an almost-Koszul pair.

\begin{enumerate}
	\item The complexes $\K_{\ast }^{l}(A,C)$, $\K_{\ast }^{r}(A,C)$ and  $\K_{\ast }(A,C)$ are exact, provided that one of them is so.

	\item  The complexes $\K_{l}^{\ast }(A,C)$, $\K_{r}^{\ast }(A,C)$ and $\K^{\ast }(A,C)$ are exact, provided that one of them is so.
\end{enumerate}
\end{proposition}

\begin{proof}
	Assuming that $\K_{\ast }(A,C)$ is exact we deduce that it splits  in the category of right $A$-modules, as $\K_{n}(A,C)$ is projective in this category for any $n\geq -1.$ By Proposition \ref{pr:Inv-K(A,C)} (1) we have
\begin{equation*}
	\K_{\ast }^{\,l}(A,C)\cong \K_{\ast }(A,C)\otimes _{A}R,
\end{equation*}%
	so this complex is exact too. Let us now suppose that $\K_{\ast}^l(A,C)$ is exact. We first remark that, by construction, ${d}_{0}^{\,l}$ is zero on $A^{n}\otimes C_{0} $ for any $n>0.$ Moreover, for $c\in C_{1}$ and $a\in A$ we have
\begin{equation*}
	{d}_{1}^{\,l}(a\otimes c)=a\theta_{C,A} (c)\otimes 1.
\end{equation*}%
	Therefore the exactness of $\K_{\ast }^{l}(A,C)$ in degree $0$ and the fact that $\theta_{C,A}$ is bijective imply the relation $A^{n}=A^{n-1}A^{1}, $ for all $n>0.$ Hence $A$ is strongly graded. In view of the preceding lemma we get that $\K_{\ast}(A,C)$ is exact in degree $0.$

	Since ${d}_{0}$ is surjective it remains to show that $\h_{n}(\K_{\ast }(A,C))=0,$ for all $n>0.$  For short, let $\K_{\ast}:=\K_{\ast }(A,C).$ For $n,p\geq 0$ we define $X_{n,p}:=A^{p-n}\otimes C_{n},$ where $A^{p-n}=0$ in the case when $p<n$. Thus $\K_{n}:=\textstyle\oplus_{p\geq 0}X_{n,p}\otimes A$. Obviously,
\begin{equation}\label{inc}
	(d_{n}^{l}\otimes {\Id}_{A})(X_{n,p}\otimes A)\subseteq X_{n-1,p}\otimes A\quad \text{and\quad }({\Id}_{A}\otimes d_{n}^{r})(X_{n,p}\otimes A)\subseteq X_{n-1,p-1}\otimes A.
\end{equation}%
	Taking into account the above inclusions we get that $\K_{\ast }^{i}:=\textstyle\oplus_{p=0}^{i}X_{\ast,p}\otimes A$ is a subcomplex of $\K_{\ast }$. Let $L_{\ast }^{i}:=\K_{\ast}^{i}/\K_{\ast }^{i-1}$. For any $k\geq 0$ we can identify $L_{k}^{i}$ and $A^{i-k}\otimes C_{k}\otimes A$ as $R$-bimodules. By (\ref{inc}) it follows that $L_{\ast }^{i}$ is isomorphic to $\K_{\ast }^{l}(A,C,i)\otimes A,$ see \S\ref{K(A,C,m)} for the definition of the latter complex. We deduce that $L_{\ast }^{i}$ is exact, being isomorphic to a direct summand of the exact complex $\K_{\ast}^{l}(A,C)\otimes A.$ We now fix $n>0.$ Note that $\K_{n}^{0}=0$. In particular, $\K_{\ast}^{0} $ is exact in degree $n.$ Writing the long exact sequence in homology for
\begin{equation*}
	0\longrightarrow \K_{\ast }^{i-1}\longrightarrow \K_{\ast}^{i}\longrightarrow L_{\ast }^{i}\longrightarrow 0
\end{equation*}%
	we deduce by induction that $\K_{\ast }^{i}$ is exact for any $i\geq 0.$ We can now prove that $\K_{\ast }$ is exact in degree $n.$ Indeed, if $\omega $ is an $n$-cycle in $\K_{\ast }$, then there is $i$ such that $\omega \in \K_{n}^{i}$. Since $\omega$ is a cycle in $\K_{\ast }^{i} $ it follows that $\omega $ is a boundary in this complex. Thus, a fortiori, $\omega $ is a boundary in $\K_{\ast }$, so $\K_{\ast}(A,C)$ is exact. 
	
	To show that $\K_{\ast }(A,C)$ is exact if and only if $\K_{\ast}^{r}(A,C) $ is exact we proceed as follows. By definition, the latter sequence is equal to $\K_{\ast }^{l}(A^{op},C^{op})$ which, in turn, is exact if and only if $\K_{\ast }(A^{op},C^{op})$ is so. Let $d_{\ast }^{op}$ denote the differential of $\K_{\ast}(A^{op},C^{op}).$ For every $n\geq 0,$ the map
\begin{equation*}
	\eta _{n}:A^{op}\otimes _{R^{op}}\left(C^{op}\right)_{n}\otimes _{R^{op}}A^{op}\rightarrow A\otimes _{R}C_{n}\otimes _{R}A,\quad \eta_{n}\left( a\otimes _{R^{op}}c\otimes_{R^{op}}b\right) =b\otimes_{R}c\otimes _{R}a
\end{equation*}%
	is bijective and $\eta _{n-1}\circ d_{n}^{op}=(-1)^{n}d_{n}\circ\eta _{n}. $ In conclusion, the homology of $\K_{\ast }(A^{op},C^{op})$ is trivial if and only if the homology groups of $\K_{\ast }(A,C)$ vanish as well. 
	
	We begin the proof of the second part of the proposition by showing that $\K_{l}^{\ast }(A,C)$ is exact, provided that $\K^{\ast}(A,C)$ satisfies this property. The latter complex splits in the category of right $C$-comodules, as it is exact and $\K^n(A,C)$ is injective as a right comodule for any $n$. In view of Proposition \ref{pr:Inv-K(A,C)} (2) we conclude that $\K_{l}^{\ast }(A,C)$ is exact.

	Let us now assume that $\K_{l}^{\ast }(A,C)$ is exact. Since the comultiplication of a coring is always injective, this property holds in degree $-1$. 
	
	Since $A$ is connected we identify $\K_{l}^{0}(A,C)$ with $C$. Via this identification,  $d_{l}^{0}$ is a map from $C$ to $C\ot A^0$ which vanishes on  $C_{0}$. On the other hand, for $p>0$ and $c\in C_p$, we have $d_{l}^{0}(c)=\sum c_{(1,p-1)}\ot \theta_{C,A}(c_{(2,1)})$. Since $\theta_{C,A}$ is an isomorphism it follows
\begin{equation*}
	\h^0(\K_{l}^{\ast }(A,C))\cong\textstyle\bigoplus\limits_{p\geq 1}\Ker\Delta_{p-1,1}.
\end{equation*}
	By assumption we get that $C$ is strongly graded, so $\K^{\ast}(A,C)$ is exact in degree zero, cf. Lemma \ref{le:cog1} (2).

	It remains to show that $\K^{\ast }(A,C)$ is exact in degree $n>0$. For the sake of simplicity, let us denote this complex by $(\K^{\ast },d^{\ast }).$ If $X_{n,p}:=C_{p-n}\otimes A^{n},$ then $\K^{n}=\textstyle \oplus_{p\geq 0}X_{n,p}\otimes C.$ Moreover,
\begin{equation}\label{ec:Prop Xnp}
	(d_{l}^{n}\otimes {\Id}_{C})(X_{n,p}\otimes C)\subseteq X_{n+1,p}\otimes C\quad \text{and\quad }\left( {\Id}_{C}\otimes d_{r}^{n}\right) \left(X_{n,p}\otimes C\right) \subseteq X_{n+1,p+1}\otimes C.
\end{equation}%
	In particular, $\K_{i}^{\ast }$ is a subcomplex of $\K^{\ast },$ where $\K_{i}^{n}:=\oplus_{p\geq i}X_{n,p}\otimes C.$ Let $L_i^\ast:=\K_{i}^{\ast}/\K_{i+1}^{\ast}$.

	Note that $X_{n+1,i+1}\otimes C$ is a subset of $\K_{i+1}^{n+1}$ and the differential of $L_{i}^{\ast }$ maps an $n$-chain $x\otimes c+\K_{i+1}^{n}\ $ to $d_{l}^{n}(x)\otimes c+\K_{i+1}^{n+1}.$ Thus the $R$-bilinear isomorphism 
\begin{equation*}
	L_{i}^{n}\cong \K_{l}^{n}(A,C,i)\otimes C
\end{equation*}%
	allows us to identify the complexes $L_{i}^{\ast }$ and $\K_{l}^{\ast}(A,C,i)\otimes C$. Since $\K_{l}^{\ast }(A,C,i)$ is a direct summand of $\K_{l}^{\ast }(A,C)$ and $C$ is flat as a left $R$-module we conclude that $L_{i}^{\ast }$ is exact.
	Our goal now is to show that all quotients $\K^{\ast}/\K_{i}^{\ast }$ are exact. We proceed by induction. Clearly $\K^{\ast }/\K_{0}^{\ast }=0.$ Let us assume that $\K^{\ast }/\K_{i}^{\ast }$ is exact. Since in the short exact sequence
\begin{equation*}
	0\longrightarrow L_{i}^{\ast }\longrightarrow \K^{\ast }/\K_{i+1}^{\ast}\longrightarrow \K^{\ast }/\K_{i}^{\ast}\longrightarrow 0
\end{equation*}%
	$L_{i}^{\ast }$ is also exact we deduce that the middle term has the same property. We can now prove that $\K^{\ast }$ is exact. Let $\omega $ be an $n $-cocycle in $\K^{\ast }.$ We choose a positive integer $i$ such that $\omega $ belongs to $M_{i}^{n}:=\textstyle\bigoplus_{p\leq i}X_{n,p}\otimes C.$ The projection induces an $R$-bimodule isomorphism $\nu ^{n}:\K^{n}/\K_{i+1}^{n}\rightarrow M_{i}^{n}$. Using $\nu ^{\ast }$ one transports the differential maps of ${\K^{{\ast}}}/\K_{i+1}^{\ast }$ to get a cochain complex $(M_{i}^{\ast },\partial ^{\ast })$ which is isomorphic to ${\K^{{\ast }}}/\K_{i+1}^{\ast }.$ Clearly, $M_{i}^{\ast }$ is  exact and $\omega $ is a cocycle in $M_{i}^{\ast }.$ Hence, there is $\zeta \in M_{i}^{n-1}$ such that $\omega =\partial ^{n}(\zeta )$. On the other hand $\partial ^{n}(\zeta )=d^{n}(\zeta )$ as, by construction, $\partial ^{n}$ and $d^{n}$ are equal on  $M_{i}^{n}.$ Thus the proposition is proved.
\end{proof}

\begin{theorem}\label{thm: Koszul}
If one of the six complexes from the preceding proposition is exact, then all are so.
\end{theorem}

\begin{proof}
	Recall that for the complexes $\K_{l}^{\ast }(A,C)$ and $\K_{\ast}^{r}(A,C)$ we have the decompositions
\begin{equation*}
	\K_{l}^{\ast}(A,C)=\textstyle\bigoplus\limits_{m\in\mathbb{N}} \K_{l}^{\ast}(A,C,m)
			\qquad \text{and}\qquad
	\K_{\ast }^{r}(A,C)=\textstyle\bigoplus\limits_{m\in\mathbb{N}}  \K_{\ast}^{r}(A,C,m).
\end{equation*}%
	Let $m>0$. By construction, $\K_{l}^{p}(A,C,m)=\K_{m-p}^{r}(A,C,m)$ and $d_{l}^{p}=d_{m-p}^{r}$, for any integer $p$. Thus
\begin{equation*}
	\h^{p}(\K_{l}^{\ast }(A,C,m))=\h_{m-p}(\K_{\ast }^{r}(A,C,m)).
\end{equation*}%
	Since, by definition, both $\K_{l}^{\ast }(A,C,0)$ and $\K_{\ast}^{r}(A,C,0)$ are always exact, it follows that the cohomology groups of $\K_{l}^{\ast }(A,C)$ vanish if and only if the homology groups of $\K_{\ast}^{r}(A,C)$ vanish as well. We conclude the proof of the theorem by applying Proposition \ref{pr:K=exact}.
\end{proof}

\begin{definition}\label{de: pair}
	An almost-Koszul pair $(A,C)$ is said to be \emph{Koszul} if and only if the complexes from Proposition \ref{pr:K=exact} are exact.
\end{definition}

\begin{corollary}\label{co:resolutions}
	Let $(A,C)$ be a Koszul pair. Then the following statements hold:

\begin{enumerate}
	\item The complex $\K_{l}^{\ast }(A,C)$ is a resolution of $R$ by injective graded left ${C}$-comodules.

	\item	The complex $\K_{r}^{\ast }(A,C)$ is a resolution of $R$ by injective graded right ${C}$-comodules.

	\item The complex $\K_{\ast }^{l}(A,C)$ is a resolution of $R$ by projective graded left ${A}$-modules.

	\item The complex $\K_{\ast }^{r}(A,C)$ is a resolution of $R$ by projective graded right ${A}$-modules.

	\item If $A$ is an injective $R$-bimodule, then $\K^{\ast} (A,C)$ is a resolution of $C$ by injective graded ${C}$-bicomodules.

	\item If $C$ is a projective $R$-bimodule, then $\K_{\ast}(A,C)$ is a resolution of $A$ by projective graded ${A}$-bimodules.
\end{enumerate}
\end{corollary}

\begin{proof}
	The first four statements are immediate consequences of the preceding theorem. Let us assume that $C$ is projective as an $R$-bimodule. Thus the sub-bimodule $C_n$ is also projective. It follows that $A\otimes C_{n}\otimes A$ is projective as an $A$-bimodule, so $\K_{\ast}(A,C)$ is a resolution of $A$ by projective $A$-bimodules. Analogously, by Lemma \ref{le:injective},  if $A$ is an injective $R$-bimodule then $C\otimes A^n\otimes C$ is an injective bicomodule. Therefore, the last statement holds as well.
\end{proof}

\begin{remark}
	Note that, although $R$ is semisimple, there might exist $R$-bimodules which are neither projective nor injective. However, if $R$ is a separable algebra over a field $\Bbbk$ (\emph{i.e.} $R$ is projective as a bimodule over itself), then every $R$-bimodule is both projective and injective. Henceforth, for a Koszul  pair $(A,C)$, under this additional assumption on $R $, the complex  $\K^{\ast}(A,C)$ is a resolution  of $C$ by injective $C$-bicomodules and $\K_{\ast}(A,C)$ is a resolution by projective $A$-bimodules. 
\end{remark}

\begin{corollary}\label{co:(A,T)}
	Let $(A,C)$ be a Koszul pair. Then the $R$-ring $A$ and the $R$-coring $C$ are strongly graded. In particular, $(A,T(A))$ and $(E(C),C)$ are almost-Koszul pairs.
\end{corollary}

\begin{proof}
	The complex $\K_{\ast }^{l}(A,C)$ is exact, as $(A,C)$ is Koszul. Hence, by the proof of Proposition \ref{pr:K=exact}, it follows that $A$ is strongly graded. By Proposition \ref{pr:Tor}, the pair $(A,T(A))$ is almost-Koszul. The statements concerning the coring $C$ can be proved in a similar way.
\end{proof}

\begin{corollary}
	The pair $(A,C)$ is Koszul if and only if $(A^{op},C^{op})$ is  Koszul too.
\end{corollary}

\begin{proof}
	The homology of $\K_{\ast }(A^{op},C^{op})$ is trivial if and only if the homology groups of $\K_{\ast }(A,C)$ vanishes, by the proof of Proposition \ref{pr:K=exact}.
\end{proof}

\begin{theorem}\label{te:alg-echiv}\label{te:C-Koszul}
	Let $(A,C)$ be a Koszul pair. The graded $R$-coring $T(A)$ is isomorphic to $C$ and $(A,T(A))$ is a Koszul pair. Dually, $E(C)$ and $A$ are isomorphic as graded $R$-rings and $(E(C),C) $ is Koszul.
\end{theorem}

\begin{proof}
	We know that $A$ is strongly graded and that $(A,T(A))$ is almost-Koszul. Let $\phi _{\ast }$ be the morphism of complexes from $\K_{\ast }^{l}(A,C)$ to $\beta{}_{\ast }^{\,l}(A),$ that we constructed in Proposition \ref{pr:phi}. Since $\K_{\ast }^{l}(A,C)$ is a projective resolution of $R$ and $\phi_{-1}=\Id_R$ is invertible, there exists a morphism $\phi _{\ast}^{\prime }:\beta{}_{\ast}^{\,l}(A)\rightarrow \K_{\ast }^{l}(A,C)$ that lifts the identity of $R$. By the Comparison Theorem \cite[Theorem 2.2.6]{We} it follows that $\phi _{\ast }\circ \phi_{\ast }^{\prime}\ $and $\phi _{\ast }^{\prime }\circ \phi_{\ast }$ are homotopic to ${\Id}_{\beta{}_{\ast}^{\,l}(A)}$ and ${\Id}_{\K_{\ast }^{l}(A,C)},$ respectively. Obviously, $\Id_R\otimes_A\phi_{\ast}$ and $\Id_R\otimes_A\phi^{\prime }_{\ast}$ are inverses of each other up to a homotopy. We deduce that $\h_{n}({\Id}_{R}\otimes _{A}\phi _{\ast })$ is an isomorphism between $\h_n(R\otimes_A \K_{\ast}^l(A,C))$ and $\h_n\left(\Omega_{\ast}(A)\right)$, for any $n\geq 0$. On the other hand, by Proposition \ref{pr:KisoC}, the former homology group is isomorphic to $C_n$. In conclusion, the map $\gamma _{n}:C_{n}\rightarrow T_{n}(A) $ defined by
\begin{equation*}
	\gamma _{n}(c):=\Delta (n)(c)+B_{n}(A)
\end{equation*}%
	is an $R$-bimodule isomorphism. Here, $\Delta(0):={\Id}_{R}$ and $B_{n}(A)$ denotes the group of $n$-boundaries in $\Omega _{n}(A).$ Now it is not difficult to see that $\{\gamma _{n}\}_{n\in \mathbb{N}}$ is an isomorphism of graded $R$-corings between $C$ and $T(A).$ In conclusion ${\Id}_{A}\otimes \gamma _{\ast }$ is an isomorphism of chain complexes from $\K_{\ast }^{l}(A,C)$ to $\K_{\ast }^{l}(A,T(A)).$ Thus the complex $\K_{\ast}^{l}(A,T(A))$ is exact, meaning that $(A,T(A))$ is Koszul.

	We have seen that $C$ is strongly graded and that $(E(C),C)$ is an almost-Koszul pair. The morphism $\phi ^{\ast }$ from Proposition \ref{pr:Psi} lifts the identity of $R.$ By hypothesis $\beta{}_{r}^{\,\ast }(C)$ and $(\K_{r}^{\ast}(A,C),(-1)^{\ast }d{}_{r}^{\,\ast })$ are resolutions of $R$ in the category of right $C$-comodules, so $\Hom^{C}(R,\phi ^{\ast })$ is a quasi-isomorphism of complexes. By applying the functor $\Hom^{C}(R,-)$ to ${\beta}_{r}^{\ast }(C)$ and $\K_{r}^{\ast}(A,C)$ we get an isomorphism $\gamma^{\ast}$ of cochain complexes from $(\Omega^{\ast}(C),\partial^{\ast})$ to $(A^{\ast },0)$. 
	
	Of course, $\gamma ^{0}:={\Id}_{R}$.  Let us recall that $\overline{\pi } _{1}^{C}:\overline{C}\to C_1$ denotes the map induced by the projection $\pi^C_1$  and $\overline{\theta}:=\theta _{C,A}\circ \overline{\pi }{}_{1}^{\,C}$. For $n>0$ the map  $\gamma^n:\overline{C}^{\,\ot n}\to A^n$ is explicitely given by the formula
\begin{equation*}
	\gamma ^{n}({x}^{1}\otimes \cdots \otimes{x}^{n})=\overline{\theta }({x}^{1})\cdots \overline{\theta }({x}^{n}).
\end{equation*}%
	As $\Omega ^{\ast }(C)$ is the free $R$-ring generated by $\overline{C}$, it follows that $\gamma ^{\ast}$ is a morphism of cochain $R$-rings. Thus, it induces an isomorphism of  graded $R$-rings $E(C)\cong A$, which in turn can be used to identify $\K_{r}^{\ast }(E(C),C)$ and $\K_{r}^{\ast}(A,C).$
\end{proof}

\begin{corollary}\label{thm:CharacterizationKoszul}
	Let $(A,C^{\prime })$ and $(A,C^{\prime\prime })$ be Koszul pairs. Then $C^{\prime }\cong C^{\prime \prime }$ as graded $R$-corings. Dually, if $(A^{\prime },C)$ and $(A^{\prime \prime },C)$ are Koszul, then $A^{\prime }\cong A^{\prime \prime }$ as graded $R$-rings$.$
\end{corollary}

\begin{proof}
	By the preceding theorem, there are canonical coring isomorphisms $C^{\prime }\cong T(A)$ and $C^{\prime \prime }\cong T(A).$ Analogously, we have $A^{\prime }\cong E(C)$ and $A^{\prime \prime }\cong E(C)$.
\end{proof}

\begin{corollary}
	If $(A,C)$ is a Koszul pair then $E(T(A))\cong A\ $and $T(E(C))\cong C.$
\end{corollary}

\begin{proof}
	By Theorem \ref{te:C-Koszul}, both $(A,T(A))$  and $(E(T(A)),T(A))$ are Koszul. Using the preceding corollary we conclude that $A\cong E(T(A))$. The proof of the second isomorphism is similar.
\end{proof}

\begin{fact}[Koszul rings.]
	Let $A$ be a connected $R$-ring. Following \cite{BGS} we shall say that $A$ is a left Koszul ring if $R$ has a resolution $P_{\ast }\rightarrow R$ by projective graded left $A$-modules such that every $P_{n}$ is generated by its homogeneous elements of degree $n.$
\end{fact}

\begin{theorem}  \label{te:caracterizare_Koszul}
	Let $A$ be a connected $R$-ring. Then the following assertions are equivalent:
	\begin{enumerate}
	 \item $A$ is Koszul.
	 
	 \item $A$ is strongly graded and $(A,T(A))$ is Koszul.
	 
	 \item There exists a graded $R$-coring $C$ such that $(A,C)$ is Koszul.
	\end{enumerate}
\end{theorem}

\begin{proof}
	Let us assume that  $A$ is strongly graded and that $(A,T(A))$ is Koszul. Then $\K_{\ast }^{l}(A,T(A))$ is a resolution of $R$ by graded projective $A$-modules. Clearly, by  definition, every $\K_{n}^{l}(A,T(A))$ is generated as an $A$-module by $A^{0}\otimes T_{n}(A),$  its homogeneous component of degree $n$. Hence $A$ is Koszul.

	Let $A$  be a Koszul ring and let $V:=A^1$. By \cite{BGS}, any Koszul ring is quadratic and generated by $V$, so there is $W\subseteq V\otimes V$  such that $A$ is isomorphic as a graded $R$-ring with the quotient of $T_{R}^{a}(V)$ by the two-sided ideal generated by $W.$  Furthermore, we define $C_{0}:=R,$ $C_{1}:=V$ and
\begin{equation*}
	C_{n}=\bigcap_{p=0}^{n-2}V^{\otimes p}\otimes W\otimes V^{\otimes n-p-2},
\end{equation*}%
	for every $n\geq 2$. We claim that the $R$-bimodule $C:=\textstyle\bigoplus_{n\in \mathbb{N}}C_{n}$ is a graded subcoring of $T_{R}^{c}(V). $ It is enough to prove that $\Delta _{p,q}(c)\in C_{p}\otimes C_{q} $, for any $p,q\geq 0$ and $c\in C_{p+q}$. In the case when $p=0$ or $q=0,$ we have nothing to prove. Let us assume that $p>0$ and $q>0.$ Obviously, $\Delta _{p,q}(c)\in C_{p}\otimes T^{q},$ if $p=1.$ In the case when $p\geq 2$ we have $\Delta _{p,q}(c)=c,$ where in the right-hand side of this identity $c$ is regarded as an element in $V^{\otimes p}\otimes V^{\otimes q}$. Thus
\begin{equation*}
	\Delta _{p,q}(c)\in \bigcap_{i=0}^{p+q-2}V^{\otimes i}\otimes W\otimes V^{\otimes p+q-i-2}\subseteq\bigcap_{i=0}^{p-2}V^{\otimes i}\otimes W\otimes V^{\otimes p+q-i-2}=C_{p}\otimes V^{\otimes q}.
\end{equation*}%
	Similarly, $\Delta _{p,q}(c)\in V^{\otimes p}\otimes C_{q}.$ Thus $\Delta_{p,q}(c)\in C_{p}\otimes C_{q},$ that is our claim is proved.

	By construction $C$ is connected and $A^{1}=C_{1}=V.$ Let $c\in C_{2}=W$. Since $\Delta _{1,1}(c)=c$ and the multiplication of $T_{R}^{a}(V)$ is defined by the canonical isomorphisms $V^{\otimes p}\otimes V^{\otimes q}\cong V^{\otimes p+q}$, we have $m^{1,1}_A(\Delta _{1,1}(c))=c+W=0.$ Thus $(A,C)$ is almost-Koszul with respect to $\theta_{C,A}=\Id_V$. Taking into account the definition of $C$ it follows that $\K_{\ast }^{l}(A,C)$ coincides up to a degree shifting with the Koszul complex \cite[p. 483]{BGS}, which is exact by assumption. In conclusion if $A$ is Koszul, then there is  a Koszul pair $(A,C)$. Finally, by Theorem \ref{te:alg-echiv} we deduce that $A$ is strongly graded and $(A,T(A))$ is Koszul, provided that there exists a Koszul pair $(A,C)$.
\end{proof}

\begin{corollary}
	If $A$ is left Koszul, then $A$ is right Koszul, and conversely.
\end{corollary}

\begin{proof}
	Let $A$ be a left Koszul ring. Then, by the preceding theorem, $A$ is strongly graded and $(A,T(A))$ is Koszul. Since $A^{op}$ is strongly graded and $(A^{op},T(A)^{op})$ is Koszul, it follows that $A^{op}$ is a left Koszul ring. Of course, this is equivalent to the fact that $A$ is right Koszul.
\end{proof}

\begin{remark}
	The results of this section suggest the following definition. A  connected coring $C$ is called Koszul if and only if $C$ is strongly graded and $(E(C),C)$ is a Koszul pair. Koszul corings and some of their applications will be investigated in a sequel of this paper.
\end{remark} 

\section{Hochschild (co)homology of Koszul rings}

	In this section $R$ denotes a separable algebra over a field  $\Bbbk $. Therefore, for any  Koszul pair $(A,C)$ the complex $\K_{\ast }(A,C)$ is a projective resolution of $A$ in the category of right $A$-modules, cf.  Corollary \ref{co:resolutions}. We shall use this resolution to investigate the Hochschild (co)homology of $A$  with coefficients in an $A$-bimodule $M$.

\begin{fact}[The cyclic tensor product.]
	The tensor product algebra $R^{e}:=R\otimes _{\Bbbk }R^{op},$ between $R$ and its opposite algebra $R^{op},$ is called the \emph{enveloping algebra} of $R.$

	Since $R$ is an algebra over the field $\Bbbk$, we have to adapt the definition of $R$-bimodules to reflect this extra structure. By an  $R$-bimodule we mean a left (or  a right) $R^e$-module $V$. Equivalently, $V$ is a left and a right $R$ module which satisfies the usual associativity relation $(rv)s=r(vs)$ and, in addition, the condition $xv=vx$ holds for all  $x\in \Bbbk$ and $v\in V$. 
	
	For an $R$-bimodule $V$ let $[R,V]$ be the linear space spanned by all commutators $[r,v]:=rv-vr,$ with $r\in R$ and $v\in V$. Let $V_R:=V/[R,V]$.

	 Recall that $\otimes=\ot_R$. For any $R$-bimodules $V_{1},\dots ,V_{n}$ the tensor product $V_{1}\otimes \cdots \otimes V_{n}$ is a bimodule, so we may define \emph{the cyclic tensor product} of $V_{1},\dots ,V_{n}$  by the relation
	\[
	 V_{1}{\,\widehat{\otimes }\,\cdots \,\widehat{\otimes }\,V}_{n}:=\left(V_{1}\otimes \cdots \otimes V_{n}\right) _{R}.
	\]
	For the equivalence class of $v_{1}\otimes \cdots \otimes v_{n}$ in the cyclic tensor product we use the notation $v_{1}{\,\widehat{\otimes}\,\cdots \,\widehat{\otimes }\,v}_{n}.$
	If $V$ and $W$ are $R$-bimodules, then the map $v{\,\widehat{\otimes }\,w\mapsto v}\otimes _{R^{e}}w$ is an isomorphism $V{\,\widehat{\otimes }\,W}\cong V\otimes _{R^{e}}W$, so
\begin{equation*}
	V_{1}{\,\widehat{\otimes }\,\cdots \,\widehat{\otimes }\,V}_{n}\cong \left(V_{1}{\,\otimes \cdots \,\otimes V}_{i}\right) \otimes _{R^{e}}\left( V_{i+1}{\,\otimes \cdots \,\otimes V}_{n}\right) .
\end{equation*}%
	The cyclic tensor products $V{\,\widehat{\otimes }\,W}$ and $W{\widehat{\otimes }\,V}$ are isomorphic via the linear map $v{\,\widehat{\otimes }\,w}\mapsto w{\,\widehat{\otimes }\,v.}$ Thus
\begin{equation*}
	V_{1}{\,\widehat{\otimes }\,\cdots \,\widehat{\otimes }\,V}_{n}\cong V_{2}{\,\widehat{\otimes }\,V}_{3}{\,\widehat{\otimes }\,\cdots \,\widehat{\otimes }\,V}_{n}{\,\widehat{\otimes }\,V}_{1}\cong {\,\cdots \cong V}_{n}{\,\widehat{\otimes }\,{V}_{1}{\,\widehat{\otimes }\,}\cdots \,\widehat{\otimes }\,V}_{n-1}.
\end{equation*}
\end{fact}

\begin{fact}[The complex $\K_{\ast }(A,M)$.]
	Let $A$ be a Koszul ring over a  separable $\Bbbk $-algebra $R$. The $R$-ring structure of $A$ induces a canonical ring morphism from $\Bbbk $ to $A$. Since we are interested in the Hochschild (co)homology of $A$, we assume that the image of this map is central in $A$, that is $A$ is a $\Bbbk$-algebra. In this setting we also define an $A$-bimodule as a left module over the enveloping algebra $A^e:=A\ot_\Bbbk A^{op}$ of $A$. We fix a connected $R$-coring $C$ such that $(A,C)$ is Koszul. Recall that  $C\cong T(A)$ as $R$-corings.  By the above assumption, $T(A)$ is an $R^e$-module, so we may assume that $C$ has the same property.

	The Hochschild homology of $A$ with coefficients in an $A$-bimodule $M $ is defined by the relation $\hh_{\ast}(A,M):=\tor_{\ast }^{A^{e}}(A,M)$. Let $\K_{\ast }'(A,C)$ denote the complex obtained from $\K_{\ast }(A,C)$ by dropping the part of degree $-1$. Thus the Hochschild homology of $A$ with coefficients in $M$ may be computed as the homology of the complex $\K_\ast'(A,C)\otimes _{A^{e}}M$.  We identify the vector spaces $\K_{n}(A,C)\otimes _{A^{e}}M$ and $M{\,\widehat{\otimes }\,}C_{n}\cong{\,C_{n}\widehat{\otimes }\,}M $ via the map $\varphi _{n}$ defined by
\begin{equation*}
	\varphi _{n}((x\otimes c\otimes y)\otimes _{A^{e}}m)=(ymx){\widehat{\otimes }\,}c.
\end{equation*}%
	Its inverse is given by $\varphi  _{n}^{-1}(m{\,\widehat{\otimes }\,}c)=(1\otimes   c\otimes 1)\otimes _{A^{e}}m$. Let  $\partial _{n}:M{\,\widehat{\otimes }\,}C_{n}\rightarrow M{\ \widehat{\otimes}\,}C_{n-1}$ be the map
\begin{equation*}
	\partial _{n}:= \varphi_{n-1}\circ (d_{n}\otimes_{A^{e}}\Id_M)\circ \varphi_{n}^{-1},
\end{equation*}%
	where $d_{n}$ denotes the differential of $\K_{\ast }(A,C)$.  It is easy to see  that $\pd_n$ satisfies the following equation 
\begin{equation}\label{eq:pdn}
	\partial _{n}(m{\,\widehat{\otimes }\,}c)=\sum\limits m\theta _{C,A}(c_{(1,1)}){\,\widehat{\otimes }\,\,c_{(2,{n-1})}}+(-1)^{n}\sum\limits \theta _{C,A}(c_{(2,1)})m{\,\,\widehat{\otimes }\,\,c_{(1,{n-1})}}.
\end{equation}
Clearly, $\varphi _{\ast }:M\otimes _{A^{e}}\K_{\ast}'(A,C)\to (M{\,\widehat{\otimes }\,}C_{\ast },\partial _{\ast })$ is an isomorphism. Hence we have the following result.
\end{fact}

\begin{theorem}\label{thm:HochHom}
	Let $(A,C)$ be a Koszul pair over a separable $\Bbbk$-algebra $R$. The Hochschild homology of $A$ with coefficients in $M$ is the homology of the chain complex $\K_{\ast}(A,M)=M{\widehat{\otimes }}C_{\ast }.$ For $m\in M$ and $c\in C_{n},$ the differential $\partial _{n}$ of this complex is given by equation \eqref{eq:pdn}.
\end{theorem}

	Recall that the Hochschild cohomology of $A$ with coefficients in an $A$-bimodule $M$ is defined by $\hh^{\ast }(A,M):=\ext_{A^{e}}^{\ast }(A,M)$. For a Koszul pair $(A,C)$, proceeding as in the proof of the preceding theorem, we obtain a complex to compute Hochschild cohomology of $A$. 

\begin{theorem}\label{thm:HochCoh}
	Let $(A,C)$ be a Koszul pair over a separable $\Bbbk $-algebra $R$. The Hochschild cohomology of $A$ with coefficients in $M$ is the cohomology of the cochain complex $\K^{\ast}(A,M)=\Hom_{R^{e}}(C_{\ast },M)$. For $c\in C_{n+1}$ and $f\in \Hom_{R^{e}}(C_{n},M)$ the differential $\partial ^{n}$ of this complex is given by
\begin{equation}\label{HCoh}
	\partial ^{n}(f)(c)=\sum\limits \theta _{C,A}(c_{(1,1)}) f(c_{(2,n)}) +(-1)^{n+1} \sum \limits f(c_{(1,n)}) \theta _{C,A}(c_{(2,1)}).
\end{equation}
\end{theorem}

  As an application of our previous results, we compute $\Hd A$, the Hochschild dimension  of a Koszul ring $A$. By definition, $\Hd A$ is the projective dimension of $A$ as a left (or right) $A^{e}$-module.  Hence $\Hd A=n$ if and only if $\hh^{n+1}(A,M)=0$ for any bimodule $M$, but there is at least one bimodule $M_0$ such  that $\hh^{n}(A,M_0)\neq 0$. Of course, if such an $n$ does not exist, then we say that the Hochschild dimension of $A$ is infinite. The projective dimension of $R$ as a left $A$-module will be denoted by $\Pd({}_{A}R)$. For the projective dimension of the right $A$-module $R$ we shall use the notation $\Pd(R_{A})$.

\begin{theorem}\label{thm:Hdim}
  If $(A,C)$ is a Koszul pair over a separable $\Bbbk$-algebra $R$, then
\begin{equation*}
  \Hd A=\Pd({}_{A}R)=\Pd(R_{A})=\sup \{n\mid C_{n}\neq 0\}.
\end{equation*}
\end{theorem}

\begin{proof}
  Obviously, $\Pd({}_{A}R)=\Pd(R_{A})$. Let us assume that $C_{d+1} = 0$, for some $d$. Then $\K_*^l(A,C)$ provides a projective resolution of $R$ in the category of left $A$-modules of length at most $d$ . Thus, for $n>d$, we have $C_n\cong\tor_n^A(R,R)$=0. It follows that $S:=\sup \{n\mid C_{n}\neq 0\}\leq d$. In particular, if $\Pd(R_{A})\leq d$, then $S\leq d$. On the other hand, assuming that the latter inequality holds, we get $C_{d+1}=0$. Thus, in view of the foregoing remarks, $\Pd(_RA)\leq d$. In conclusion, $S=\Pd(_RA)$.

  It remains to prove that $\Hd A\leq d$ if and only if $\Pd(_RA)\leq d$. Let us suppose that the Hochschild dimension of $A$ is less than or equal to $d$. Therefore, the $d$-syzygy  $M$ of the resolution $\K_{\ast }(A,C)$ is a projective $A$-bimodule such that the sequence
\begin{equation*}
  0\longleftarrow A\longleftarrow \K_{0}(A,C){\longleftarrow }\dotsb {\longleftarrow }\K_{d-2}(A,C)\longleftarrow \K_{d-1}(A,C){\longleftarrow }M{\longleftarrow }0.
\end{equation*}%
  is exact. Proceeding as in the proof of Proposition \ref{prop:Koszulcomplexes}, one shows that this sequence splits in the category of right $A$-modules. Hence, by applying the functor $(-)\ot_A R$  we get a resolution of $R$ by projective {left} $A$-modules.  Since the length of this resolution is at most $d$ we conclude that $\Pd(_RA)\leq d$. Conversely, if the projective dimension of $R$ as a left $A$-module is less than or equal to $d$, then $C_{d+1}\cong T_d(A)=0$. Thus $\K_\ast(A,C)$ is a resolution of $A$ of length less than or equal to $d$, that is $\Hd A\leq d$.
\end{proof}

We conclude this section by giving a first  example of Koszul pair, which we shall use later for the computation of Hochschild (co)homology of generalized Ore extensions. For every $R$-bimodule $V$ let $T:=T^a_R(V)$. Let  $C=R\oplus V$ denote the connected coring with the comultiplication $\Delta$ uniquely defined such that $\Delta (v)=v\otimes 1+1\otimes v,$ for any $v\in V.$ We take $\theta _{T,C}$ to be the identity map.

\begin{proposition}\label{lemma:td} 
	The pair $(T,C)$ is Koszul.
\end{proposition}

\begin{proof}
  The identity from the definition of almost-Koszul pairs is automatically verified, as $C_{2}=0.$ Thus $(T,C)$ is such a pair. Furthermore, $\K_{\ast }^{l}(T,C)$ is the complex
\begin{equation*}
  0 \longleftarrow R\longleftarrow T\longleftarrow T\otimes V \longleftarrow 0, 
\end{equation*}
  whose non-zero arrows are the projection $\pi _{T}^{0}$ of $T$ on $T^{0}$ and $d_{1}^{l}.$ Since the multiplication in $T$ is given by concatenation of tensor monomials, and $\Delta _{1,0}(v)=v\otimes 1$ we deduce that $d_{1}^{l}(x\otimes v)=x\otimes v.$ Therefore, $d_{1}^{l}$ is the identity map of $T\otimes V=\overline{T}$. Hence $\K_{\ast }^{l}(T,C)$ is exact.
\end{proof}
 
\begin{remark}
  By Theorem \ref{thm:Hdim} it follows that $\Hd T_R^a(V)=1$, for any separable algebra $R$ and any $R$-bimodule $V$. This property of tensor algebras was proved in \cite{CQ}, where the algebras of Hochschild dimension one are called quasi-free and they  represent the key ingredient in the definition of nonsingularity in Noncommutative Geometry. The Koszulity of tensor algebras is also proved in the last section of the paper, as a consequence of the fact that they are braided bialgebras. 
\end{remark}

\section{Almost-Koszul pairs associated to twisted tensor products}

  We keep the notation and the assumptions from the first section. In this section we consider two connected strongly graded $R$-rings $A$ and $B$ together with an invertible  graded twisting map $\sigma :B\otimes  A\rightarrow A\otimes B$. Our first aim is to show that $\sigma $ induces a canonical graded twisting map of $R$-corings 
\begin{equation*}
  {\tau }:T(A)\otimes T(B)\rightarrow T(B)\otimes T(A). 
\end{equation*}%
  This construction is performed such that $\left(A\otimes _{\sigma }B,T(A)\otimes _{{\tau }}T(B)\right)$ is an almost-Koszul pair, where $A\otimes _{\sigma }B$ and $T(A)\otimes _{{\tau }}T(B)$ denote the twisted tensor product $R$-ring and the twisted tensor product $R$-coring with respect $\sigma $ and ${\tau }$, respectively.  Furthermore, if $A$ and $B$ are Koszul and $\sigma$ is invertible, then we shall show that $A\otimes _{\sigma }B$ is a Koszul $R$-ring.

\begin{fact}[Twisting maps of $R$-rings.]
  Let $A$ and $B$ be $R$-rings. A \emph{twisting map} between $A$ and $B$ is given by an $R$-bilinear map $\sigma :B\otimes A\rightarrow A\otimes B$ which is compatible with the multiplication of $A$ and $B$, \emph{i.e.}
\begin{align}
  \sigma \circ (\Id_{B}\otimes m_{A})& =(m_{A}\otimes \Id_{B})\circ (\Id_{A}\otimes \sigma )\circ (\sigma \otimes \Id_{A}),  \label{twist1}
	\\
  \sigma \circ (m_{B}\otimes \Id_{A})& =(\Id_{A}\otimes m_{B})\circ (\sigma \otimes \Id_{B})\circ (\Id_{B}\otimes \sigma ).  \label{twist2}
\end{align}%
  By definition, $\sigma $ must be compatible with the units of $A$ and $B$ as well. Therefore, $\sigma (1_{B}\otimes a)=a\otimes 1_{B}$ and $\sigma (b\otimes 1_{A})=1_{A}\otimes b,$ for all $a\in A$ and $b\in B$. In computations we shall write $\sigma(b\otimes a)\in A\otimes B$ as a formal sum $\sum a_{\sigma }\otimes b_{\sigma }.$ Thus, for instance
\begin{equation*}
  \lbrack (m_{A}\otimes \Id_{B})\circ (\Id_{A}\otimes \sigma )\circ (\sigma \otimes \Id_{A})](b\otimes a^{\prime }\otimes a^{\prime \prime })=\sum a_{\sigma }^{\prime }a_{\sigma ^{\prime }}^{\prime \prime }\otimes \left(b_{\sigma }\right) _{\sigma ^{\prime }}.
\end{equation*}%
  The occurrence of $\sigma $ and $\sigma ^{\prime }$ in the above identity indicates that the twisting map  is applied twice.

  If $\sigma $ is a twisting map then $A\otimes B$ has a canonical $R$-ring structure with respect to the multiplication
\begin{equation*}
  (a^{\prime }\otimes b^{\prime })(a^{\prime \prime }\otimes  b^{\prime \prime })=\sum a^{\prime }a_{\sigma }^{\prime \prime  }\otimes b_{\sigma }^{\prime }b^{\prime \prime }
\end{equation*}%
  and the unit $1_{A}\otimes 1_{B}.$ The twisted tensor product will be denoted by $A\otimes _{\sigma }B$. In the case when $R$ is commutative and $A $ and $B$ are $R$-algebras, the twisted tensor product $A\otimes _{\sigma}B$ may be seen as a deformation of the usual tensor product algebra.

  Let us now assume that the $R$-rings $A$ and $B$ are both graded. A twisting map $\sigma $ between $A$ and $B$ is called \emph{graded} if $\sigma (B^{p}\otimes A^{q})\subseteq A^{q}\otimes B^{p}$. The restrictions of $\sigma $ to $B^{p}\otimes A^{q}$ will be denoted by $\sigma ^{p,q}.$ For such a  $\sigma $ the $R$-ring $A\otimes _{\sigma }B$ is graded and its homogeneous  component of degree $n$ is the direct sum of all $R$-bimodules $A^{p}\otimes B^{q}$ with $p+q=n.$
\end{fact}

\begin{fact}[Twisting maps of cochain $R$-rings.]
  We now assume that $(\Omega ^{\ast },d_{\Omega }^{\ast })$ and $(\Gamma ^{\ast },d_{\Gamma }^{\ast })$ are cochain $R$-rings. A graded twisting map $\sigma ^{\ast ,\ast }:\Gamma ^{\ast }\otimes \Omega ^{\ast }\rightarrow \Omega ^{\ast }\otimes \Gamma ^{\ast }$ is called a\emph{\ twisting map of cochain }$R$-\emph{rings} if $\sigma ^{\ast ,\ast }$ is compatible with the differential maps of $\Omega ^{\ast }$ and $\Gamma ^{\ast },$ in the sense that $\sigma ^{\ast ,q}$ is a map of complexes from $(\Gamma ^{\ast }\otimes \Omega ^{q},d_{\Gamma }^{\ast }\otimes \Id_{\Omega ^{q}})$ to $(\Omega ^{q}\otimes \Gamma ^{\ast },\Id_{\Omega ^{q}}\otimes d_{\Gamma }^{\ast }),$ for all $q.$ Symmetrically, $\sigma ^{p,\ast }$ must be a morphism of complexes from $(\Gamma ^{p}\otimes \Omega ^{\ast },\Id_{\Gamma ^{p}}\otimes d_{\Omega }^{\ast })$ to $(\Omega ^{\ast }\otimes\Gamma ^{p},d_{\Omega }^{\ast }\otimes \Id_{\Gamma ^{p}}),$ for all $p.$
\end{fact}

\begin{proposition}\label{pr:twist}
  Let $(\Omega ^{\ast },d_{\Omega }^{\ast })$ and $(\Gamma ^{\ast},d_{\Gamma }^{\ast })$ be cochain $R$-rings. Suppose that $V$ is an $R$-bimodule.

\begin{enumerate}
  \item If $\varphi ^{\ast }:(V\otimes \Omega ^{\ast },{\Id}_{V}\otimes d_{\Omega }^{\ast })\rightarrow (\Omega ^{\ast }\otimes V,$ $d_{\Omega }^{\ast}\otimes {\Id}_{V})$ is a morphism of complexes which is compatible with the multiplication and the unit of $\Omega ^{\ast },$ then $\overline{\varphi }{}^{\,\ast}:=\h^{\ast }(\varphi ^{\ast })$ is compatible with the multiplication and the unit of the graded $R$-ring $\h^{\ast}(\Omega ^{\ast }).$

  \item If $\chi ^{\ast }:(\Gamma ^{\ast }\otimes V,d_{\Gamma }^{\ast}\otimes{\Id}_{V})\rightarrow (V\otimes \Gamma ^{\ast },{\Id}_{V}$ $\otimes d_{\Gamma}^{\ast })$ is a morphism of complexes which is compatible with the multiplication and the unit of $\Gamma ^{\ast },$ then $\overline{\chi }^{\,\ast }:=\h ^{\ast}(\chi ^{\ast })$ is compatible with the multiplication and the unit of the graded $R$-ring $\h^{\ast }(\Gamma ^{\ast }).$

  \item Every twisting map of cochain $R$-rings $\sigma ^{\ast ,\ast }:\Gamma ^{\ast }\otimes \Omega ^{\ast }\rightarrow \Omega ^{\ast}\otimes \Gamma ^{\ast}$ induces a twisting map of graded $R$-rings $\overline{\sigma } ^{\,\ast ,\ast}:\h^{\ast }(\Gamma ^{\ast })\otimes \h^{\ast }(\Omega ^{\ast })\rightarrow \h^{\ast}(\Omega ^{\ast})\otimes \h^{\ast}(\Gamma ^{\ast}).$
\end{enumerate}
\end{proposition}

\begin{proof}
  For every $p>0$ the morphism $\varphi ^{\ast }$ induces a map
\begin{equation*}
  \overline{\varphi }^{\,p}:\h^{p}(V\otimes \Omega ^{\ast})\rightarrow\h^{p}(\Omega ^{\ast }\otimes V).
\end{equation*}%
  By assumption every left or right $R$-module is flat. Hence, $\overline{ \varphi}^{\,p}$ can be seen as a map from $V\otimes\h^{p}(\Omega ^{\ast })$ to $\h^{p}(\Omega ^{\ast})\otimes V.$ For $x\in \mathrm{Z}^{p}(\Omega^{\ast })$ let $\left[ x\right] $ denote its cohomology class. Note that $\varphi ^{\,p}(v\otimes x)$ is an element in $\mathrm{Z} ^{p}(\Omega ^{\ast})\otimes V$, as $\varphi ^{\ast }$ is a morphism of complexes. So $\varphi ^{p}(v\otimes x)=\sum x_{\varphi }\otimes v_{\varphi }$, for some $x_{\varphi }\in \mathrm{Z}^{p}(\Omega ^{\ast })  $. Hence
\begin{equation}
  \overline{\varphi }^{\,p}(v\otimes \left[ x\right] )=\sum [x_{\varphi }]\otimes v_{\varphi }.  \label{sigma-bar}
\end{equation}%
  Since $\varphi ^{\ast }$ is compatible with the multiplication of $\Omega ^{\ast}$, we get
\begin{equation*}
  \sum [\left( xy\right) _{\varphi }]\otimes {v_{\varphi }}=\sum \left[ x_{\varphi}y_{\varphi ^{\prime }}\right] \otimes {\left(v_{\varphi}\right) _{\varphi ^{\prime }}}.
\end{equation*}%
  Thus, by the definition of the multiplication in $\h^{\ast}(\Omega^{\ast}) $, it follows
\begin{equation*}
  \sum \left( \left[ x\right] \left[ y\right] \right)_{\overline{\varphi }}\otimes v_{\overline{\varphi}}=\sum \left[ x\right] _{\overline{\varphi }}\left[ y\right] _{\overline{\varphi }^{\prime }}\otimes \left( v_{\overline{\varphi }}\right) _{\overline{\varphi}^{\prime }}.
\end{equation*}%
  In conclusion, $\overline{\varphi }^{\,\ast }$ is also compatible with the multiplication of $\h^{\ast }(\Omega ^{\ast }).$ By the definition of $\overline{\varphi }^{\,\ast }$ one can easily see that this family of $R$-bilinear maps is compatible with the unit of $\h^{\ast }(\Omega ^{\ast })$. 
  
  We omit the proof of the second statement, being similar to the above one. 
  
  Let $\sigma $ be a twisting map of cochain $R$-rings. If $p\geq 0$ then $ \sigma ^{p,\ast }$ is a morphism of complexes from $\Gamma ^{p}\otimes \Omega ^{\ast }$ to $\Omega ^{\ast }\otimes \Gamma ^{p}$ which is compatible with the multiplication and the unit of $\Omega ^{\ast}.$ By the first part of the proposition it follows that $\h^{\ast }(\sigma ^{p,\ast }):\Gamma ^{p}\otimes  \h^{\ast}(\Omega )\rightarrow \h^{\ast }(\Omega ^{\ast })\otimes \Gamma ^{p}$ is  compatible with the multiplication and the unit of                     $\h^{\ast }(\Omega ^{\ast}).$ For a given $q\geq 0$, the family of $R$-bilinear maps  $\left\{\h^{q}(\sigma ^{p,\ast })\right\} _{p\geq 0}$ is a morphism  of complexes from $\Gamma ^{\ast}\otimes \h^{q}(\Omega )$ to $\h ^{q}(\Omega ^{\ast})\otimes \Gamma ^{\ast }$ which is compatible with the multiplication and the unit of $\Gamma ^{\ast }$. Applying the second part of the proposition, for every $p$ and $q$, we get a map
\begin{equation*}
  \overline{\sigma }^{\,p,q}:\h^{p}(\Gamma ^{\ast })\otimes\h^{q}  (\Omega^{\ast})\rightarrow \h^{q}(\Omega ^{\ast })\otimes\h^{p} \Gamma ^{\ast }),\quad\overline{\sigma }^{\,p,q}([x]\otimes\lbrack y])=\sum\lbrack y_{\sigma }\rbrack\otimes \lbrack x_{\sigma}\rbrack,
\end{equation*}%
  such that $\overline{\sigma }^{\,\ast ,\ast }$ is a graded twisting map of graded $R$-rings.
\end{proof}

\begin{fact}[Twisting maps of $R$-corings.]
  Let $C$ and $D$ be $R$-corings. A \emph{twisting map}  \emph{between} $C$ \emph{and} $D$ is an $R$-bilinear map $\tau :C\otimes D\rightarrow D\otimes C$ compatible with the comultiplication of $C$ and $D, $ that is
\begin{align}
  (\Delta _{D}\otimes \Id_{C})\circ \tau & =(\Id_{D}\otimes\tau) \circ (\tau\otimes \Id_{D})\circ (\Id_{C}\otimes \Delta _{D}), \label{cotwist1}
			\\
  (\Id_{D}\otimes \Delta _{C})\circ \tau & =(\tau \otimes \Id_{C})\circ (\Id_{C}\otimes \tau )\circ (\Delta _{C}\otimes \Id_{D}).  \label{cotwist2}
\end{align}%
  By definition, $\tau $ must be compatible with the counits of $C$ and $D$ as well. Thus, $\left( \Id_{D}\otimes \varepsilon _{C}\right) \circ \tau =\varepsilon _{C}\otimes {\Id}_{D}$ and $\left( \varepsilon _{D}\otimes \Id _{C}\right) \circ \tau =\Id_{C}\otimes \varepsilon _{D}.$

  For a twisting map of corings we use the notation $\tau (c\otimes d)=\sum d_{\tau }\otimes c_{\tau },$ for all $c\in C$ and $d\in  D.$ The tensor product $C\otimes D$ has a canonical $R$-coring structure, that will be denoted by $C\otimes _{\tau }D$. The counit of this $R$-coring is $\varepsilon _{C}\otimes \varepsilon _{D}$, and its comultiplication $\Delta $ is defined by the formula
\begin{equation*}
  \Delta :=(\Id_C\otimes \tau \otimes \Id_D)\circ(\Delta _{C}\otimes \Delta _{D}).
\end{equation*}%
  Let us now assume that $C$ and $D$ are graded corings. A twisting map of corings $\tau :C\otimes D\rightarrow D\otimes C$ is called  \emph{graded }if $\tau (C_{p}\otimes D_{q})\subseteq D_{q}\otimes C_{p}.$ The restriction of $\tau $ to $C_{p}\otimes D_{q}$ will be denoted by $\tau _{p,q}.$ Clearly, in this case $C\otimes _{\tau }D$ is a graded $R$-coring, whose homogeneous component of degree $n$ is the direct sum of all bimodules $C_{p}\otimes D_{q}$ with $p+q=n.$
\end{fact}

\begin{fact}[Twisting maps of chain corings.]
  We now assume that $(\Omega _{\ast },d_{\ast }^{\Omega })$ and $(\Gamma_{\ast },d_{\ast }^{\Gamma })$ are chain $R$-corings. A  graded twisting map $\tau _{\ast ,\ast }:\Omega _{\ast }\otimes \Gamma _{\ast }\rightarrow \Gamma_{\ast}\otimes \Omega _{\ast }$ is called a\emph{\ twisting map of chain} $R $-\emph{corings} if $\tau _{\ast ,q}$ is a map of complexes from $(\Omega_{\ast}\otimes \Gamma _{q},d_{\ast }^{\Omega }\otimes \Id_{\Gamma _{q}})$ to $(\Gamma_{q}\otimes \Omega _{\ast },\Id_{\Gamma _{q}}\otimes d_{\ast}^{\Omega }),$ for any $q.$ In addition, $\tau _{p,\ast }$ is a morphism of complexes from $(\Omega _{p}\otimes \Gamma _{\ast},\Id_{\Omega _{p}}\otimes d_{\ast }^{\Gamma })$ to $(\Gamma  _{\ast }\otimes \Omega _{p},d_{\ast }^{\Gamma}\otimes \Id_{\Omega_{p}}),$ for any $p.$

  Reasoning as in the proof of Proposition \ref{pr:twist}, one can  show that the following result holds true.
\end{fact}

\begin{proposition}\label{pr:cotwist}
  Let $(\Omega _{\ast },d_{\ast }^{\Omega })$ and $(\Gamma_{\ast},d_{\ast }^{\Gamma })$ be chain corings. Suppose that $V$ is an $R$-bimodule.

\begin{enumerate}
  \item If $\varphi _{\ast }:(V\otimes \Gamma _{\ast },{\Id}_{V}\otimes d_{\ast }^{\Gamma })\rightarrow (\Gamma _{\ast }\otimes V,$ $d_{\ast}^{\Gamma }\otimes {\Id}_{V})$ is a morphism of complexes that is compatible with the comultiplication and the counit of $\Gamma _{\ast }$ then $\overline{\varphi }_{\ast }:=\h_{\ast}(\varphi _{\ast })$ is also compatible with the comultiplication and the counit of the graded $R$-coring $\h_{\ast }(\Gamma_{\ast}).$

  \item If $\chi _{\ast }:(\Omega _{\ast }\otimes V,d_{\ast }^{\Omega }\otimes{\Id}_{V})\rightarrow (V\otimes \Omega _{\ast },{\Id}_{V}\otimes d_{\ast}^{\Omega })$ is a morphism of complexes that is compatible with the comultiplication and the counit of $\Omega _{\ast }$ then $\overline{\chi}_{\ast }:=\h_{\ast }(\chi_{\ast })$ is also compatible with the comultiplication and the counit of the graded $R$-coring $\h_{\ast }(\Omega _{\ast }).$

  \item If $\tau _{\ast ,\ast }:\Omega _{\ast }\otimes \Gamma_{\ast}\rightarrow \Gamma_{\ast}\otimes \Omega_{\ast }$ is a twisting map of chain corings, then $\tau _{\ast ,\ast }$  induces a twisting map of graded $R$-corings
\begin{equation*}
  \overline{\tau }_{\ast ,\ast }:\h_{\ast }(\Omega _{\ast })\otimes \h_{\ast }(\Gamma _{\ast })\rightarrow \h_{\ast }(\Gamma _{\ast })\otimes \h_{\ast}(\Omega _{\ast }).
\end{equation*}
\end{enumerate}
\end{proposition}

\begin{fact}[Entwining maps.]
  Let $A$ be an $R$-ring, and let $C$ be an $R$-coring. We say that a bimodule morphism $\lambda :C\otimes A\rightarrow A\otimes C$ is an\emph{\ entwining map} if $\lambda (c\otimes 1_{A})=1_{A}\otimes c$ and $\left( \Id_{A}\otimes \varepsilon _{C}\right) \circ \lambda =\varepsilon _{C}\otimes \Id_{A},\ $ and the following relations  hold
\begin{eqnarray}
  \lambda \circ (\Id_C\otimes m_{A}) &=&(m_{A}\otimes \Id_C)\circ (\Id_A\otimes  \lambda )\circ (\lambda \otimes \Id_A),  \label{entw1}
	\\
  (\Id_A\otimes \Delta _{C})\circ \lambda &=&(\lambda \otimes \Id_C)\circ (\Id_C\otimes \lambda)\circ (\Delta _{C}\otimes \Id_A).  \label{entw2}
\end{eqnarray}%
  Similarly one can define an entwining structure $\nu:A\ot C\to C\ot A$.
  
  Let us now assume that $A$ and $C$ are both graded. An entwining map $\lambda :C\otimes A\rightarrow A\otimes C$ is called \emph{graded} if $\lambda (C_{p}\otimes A^{q})\subseteq A^{q} \otimes C_{p}.$ The restriction of $\lambda $ to $C_{p}\otimes A^{q}$ will be denoted by $\lambda _{p}^{q}.$

  Let $(\Omega ^{\ast },d_{\Omega }^{\ast })$ and $(\Gamma _{\ast },d_{\ast}^{\Gamma })$ be a cochain $R$-ring and a chain $R$-coring,  respectively. A graded entwining map $\lambda _{\ast }^{\ast }:\Gamma _{\ast }\otimes \Omega ^{\ast }\rightarrow \Omega ^{\ast }\otimes \Gamma _{\ast }$ is called a \emph{differential entwining map} if $\lambda _{p}^{\ast }: \Gamma _{p}\otimes \Omega ^{\ast }\to \Omega ^{\ast }\otimes \Gamma _{p}$  and $\lambda _{\ast }^{q}: \Gamma _{\ast }\otimes \Omega ^{q}\to  \Omega ^{q}\otimes \Gamma _{\ast }$ are morphisms of complexes, for any $p$  and $q$. We state for future reference, without proof, the following proposition.
\end{fact}

\begin{proposition}\label{pr:entw}
  Let $(\Omega ^{\ast },d_{\Omega }^{\ast })$ and $(\Gamma_{\ast},d_{\ast }^{\Gamma})$ be a cochain $R$-ring and a chain $R$-coring, respectively. Any differential entwining map $\lambda _{\ast }^{\ast}:\Gamma _{\ast }\otimes \Omega ^{\ast}\rightarrow \Omega ^{\ast }\otimes \Gamma _{\ast }$ induces a graded entwining map
\begin{equation*}
  \overline{\lambda }_{\ast }^{\,\ast }:\h_{\ast }(\Gamma _{\ast})\otimes \h^{\ast }(\Omega ^{\ast })\rightarrow \h^{\ast}(\Omega ^{\ast })\otimes \h _{\ast }(\Gamma _{\ast }).
\end{equation*}
\end{proposition}

  It is well known that any $\Bbbk $-linear map $\sigma ^{1,1}:N\otimes _{\Bbbk }M\rightarrow M\otimes _{\Bbbk }N$ can be extended in a unique way to a graded twisting map $\sigma :T(N)\otimes_\Bbbk T(M)\rightarrow T(M)\otimes_\Bbbk T(N)$ between the free algebras generated by $M$ and $N.$ We shall adapt the method from \cite{BM} in order to produce examples of twisting  maps of chain coalgebras and differential entwining maps. Some of them will be used later on in  the  paper to show that the twisting tensor product of two Koszul $R$-rings is Koszul.

  Recall that, for every $R$-bimodule $V,$ the graded $R$-ring $T_{R}^{a}(V)$ and the graded $R$-coring $T_{R}^{c}(V)$ have the same homogeneous component of degree $n$, namely $V^{\otimes n}$. Their multiplication and comultiplication are defined by the canonical isomorphism $V^{\otimes p}\otimes V^{\otimes q}\overset{\cong }{\longrightarrow }V^{\otimes p+q}$ and its inverse, respectively.

\begin{proposition}\label{pr:phi(V,W)}
  Let $\varphi ^{V,W}:W\otimes V\rightarrow V\otimes W$ be an $R$-bimodule map.

\begin{enumerate}
  \item There exists a unique bimodule map $\varphi _{\ast }^{V,W}:T_{R}^{c}(W)\otimes V\rightarrow V\otimes T_{R}^{c}(W)$ verifying the relation $\varphi _{1}^{V,W}=\varphi ^{V,W}$ and which is compatible with the graded coring structure of $T_{R}^{c}(W)$. 

  \item If $A$ is an $R$-ring and $\varphi^{A,W}$ is compatible with the multiplication and the unit of $A$, then $\varphi _{\ast }^{A,W}$ is an entwining map. Moreover, $\varphi _{\ast }^{A,W}$ is graded, provided that $A$ is graded and $\varphi ^{A,W}$ maps $W\otimes A^{q}$ to $A^{q}\otimes W$ for all $q$.

  \item If, in addition, $B$ is a connected $R$-ring and $\varphi^{A,\overline{B}}$ is compatible with the multiplication and the unit of $\overline{B}$, then $\varphi _{\ast}^{A,\overline{B}}$ is an entwining map from $(\Omega _{\ast }(B)\otimes A,\partial _{\ast }\otimes {\Id}_{A})$ to $(A\otimes \Omega _{\ast }(B),{\Id}_{A}\otimes \partial _{\ast })$ which commutes with the differentials.

  \item If $(A,d^{\ast })$ is a cochain $R$-ring and $\varphi ^{A, \overline{B}} $ is a morphism of complexes as in (3), then $\varphi _{\ast }^{A,\overline{B}}$ is a differential entwining map.
\end{enumerate}
\end{proposition}

\begin{proof}
  In the case when $R$ is a field, the first part of the lemma is proved in \cite{BM}. The same proof works in our setting as well. Let us assume that we have already constructed $\varphi _{\ast }^{V,W}.$ By the definition of the coring $T_R^c(W)$, the compatibility of $\varphi _{\ast }^{V,W}$ with $\Delta _{p,q}$ is equivalent to the  relation
\begin{equation} \label{ec:phi_ast}
  \varphi _{p+q}^{V,W}=(\varphi _{p}^{V,W}\otimes {\Id}_{W^{\otimes q}})\circ ({\Id}_{W^{\otimes p}}\otimes \varphi _{q}^{V,W})
\end{equation}%
  where $p$ and $q$ are arbitrary nonnegative integers. In particular, if the map $\varphi _{\ast }^{V,W}$ exists it is  uniquely defined by the condition $ \varphi _{1}^{V,W}=\varphi ^{V,W}.$ On the other hand, to prove the existence of $\varphi _{\ast }^{V,W}$ we can proceed as follows. The map $\varphi _{0}^{V,W}$ must be the canonical identification $R\otimes V\cong V\otimes R$, as $ \varphi _{\ast}^{V,W}$ is compatible with the counit of $T_{R}^{c}(W)$.   We set $\varphi_{1}^{V,W}=\varphi ^{V,W}$ and, for $p>1$, we define $\varphi _{p}^{V,W}$ by
\begin{equation*}
  \varphi _{p}^{V,W}:=\left( \varphi ^{V,W}\otimes {\Id}_{W^{\otimes p-1}}\right) \circ \left( {\Id}_{W}\otimes \varphi ^{V,W}\otimes {\Id}_{W^{\otimes p-2}}\right) \circ \cdots \circ \left( {\Id}_{W^{\otimes p-2}}\otimes \varphi ^{V,W}\otimes {\Id}_{W}\right) \circ \left( {\Id}_{W^{\otimes p-1}}\otimes \varphi ^{V,W}\right) .
\end{equation*}%
  It follows easily by induction on $q$ that the relation \eqref{ec:phi_ast} is true for any $p$ and $q$.

  For the second part of the proposition we have to prove that  $\varphi _{\ast}^{A,W}$ is compatible with the unit and the multiplication of $A.$ Both compatibility conditions follow by  induction on $p$, using on the one hand the relation (\ref{ec:phi_ast}) written for $q=1$ and the fact that $\varphi _{1}^{A,W}=\varphi ^{A,W}$ is compatible with the ring structure of $A$, on the other hand.  Clearly, if $A$ is graded, then $\varphi _{\ast }^{A,W}$ maps $W^{\otimes p}\otimes A^{q}$ to $A^{q}\otimes W^{\otimes p}.$ In the graded  case we shall denote the restriction of $\varphi _{p}^{A,W}$ to $W^{\otimes p}\otimes A^{q}$ by $\varphi _{p,q}^{A,W}.$

  Let us prove the third part of the proposition. Recall that $\Omega _{\ast}(B)=T_{R}^{c}(\overline{B})$ as $R$-corings. Using the relation  (\ref{ec:phi_ast}) we get
\begin{align*}
  \varphi _{p}^{A,\overline{B}}& =(\varphi _{i}^{A,\overline{B}}\otimes {\Id}_{\overline{B}^{\,\otimes p-i}})\circ ({\Id}_{\overline{B}^{\,\otimes i}}\otimes \varphi _{1}^{A,\overline{B}}\otimes{\Id}_{\overline{B}^{\,\otimes p-i-1}})\circ({\Id}_{\overline{B}^{\,\otimes i+1}}\otimes \varphi  _{1}^{A,\overline{B}}\otimes {\Id}_{\overline{B}^{\,\otimes p-i-2}})\circ ({\Id}_{\overline{B}^{\,\otimes i+2}}\otimes \varphi _{p-i-2}^{A,\overline{B}}),
	\\
  \varphi _{p-1}^{A,\overline{B}}& =(\varphi _{i}^{A,\overline{B}}\otimes {\Id}_{\overline{B}^{\,\otimes p-i-1}})\circ ({\Id}_{\overline{B}^{\,\otimes i}}\otimes \varphi _{1}^{A,\overline{B}}\otimes {\Id}_{\overline{B}^{\,\otimes p-i-2}})\circ ({\Id}_{\overline{B}^{\,\otimes i+1}}\otimes \varphi_{p-i-2}^{A,\overline{B}}).
\end{align*}%
  Since, by assumption, $\varphi _{1}^{A,\overline{B}}=\varphi ^{A,\overline{B}}$ is compatible with the multiplication of $B,$ we deduce that
\begin{equation*}
  \varphi _{p-1}^{A,\overline{B}}\circ ({\Id}_{\overline{B}^{\,\otimes i}}\otimes m_{B}\otimes {\Id}_{\overline{B}^{\,\otimes p-i-2}}\otimes {\Id}_{A})=({\Id}_{A}\otimes {\Id}_{\overline{B}^{\,\otimes i}}\otimes m_{B}\otimes {\Id}_{\overline{B}^{\,\otimes p-i-2}})\circ \varphi _{p}^{A,\overline{B}},
\end{equation*}%
  for any $i\in \{0,\dots ,p-2\}.$ Taking into account the  definition of $\partial _{\ast }$ (see   \S\ref{fa:bar_resolution}), the above identity implies that $\varphi _{\ast }^{A,\overline{B}}$ is a morphism of complexes.

  Let us show that $\varphi _{\ast }^{A,\overline{B}}$ is a differential entwining map, provided that $(A,d^{\ast })$ is a cochain $R$-ring and $\varphi ^{A,\overline{B}}$ is a morphism of complexes. Hence, it remains to prove that
\begin{equation}\label{ec:phi_differential_entwining_map}
  \varphi _{p,q+1}^{A,\overline{B}}\circ ({\Id}_{\Omega _{p}(B)}\otimes d^{q})=(d^{q}\otimes {\Id}_{\Omega _{p}(B)})\circ \varphi _{p,q}^{A,\overline{B}},
\end{equation}%
  for any $p,q\geq 0.$ If $p=0$ then this relation is trivially true for any $q,$ as $\varphi _{0,q}^{A,\overline{B}}$ is the canonical identification $R\otimes A^{q}\cong A^{q}\otimes R.$ For $p=1$ the equation holds as well, since $\varphi _{1,\ast }^{A,\overline{B}}:\overline{B}\otimes A^{\ast}\rightarrow A^{\ast }\otimes \overline{B}$ is a morphism  of complexes  by assumption. Let us suppose that (\ref{ec:phi_differential_entwining_map}) is true for a given $p$ and any $q\geq 0$. By using the recurrence relation that defines $\varphi _{\ast }^{A,\overline{B}}$ and the fact that $\varphi _{p,\ast }^{A,\overline{B}}$ and $\varphi _{1,\ast }^{A,\overline{B}}$ are morphisms of complexes we deduce that (\ref{ec:phi_differential_entwining_map}) holds for $p+1$ and any $q.$ 
\end{proof}

  Proceeding in a similar way one proves the proposition below. Starting with a bimodule map, we now produce examples of twisting maps of (graded or chain) $R$-corings.

\begin{proposition}\label{pr:psi(V,W)}
  Let $\psi ^{V,W}:W\otimes V\rightarrow V\otimes W$ be an $R$-bimodule map.
\begin{enumerate}
  \item There exists a unique graded $R$-bimodule map $\psi _{\ast}^{V,W}:W\otimes T_{R}^{c}(V)\rightarrow T_{R}^{c}(V)\otimes W$ which verifies the relation $\psi _{1}^{V,W}=\psi ^{V,W} $ and is compatible with the coring structure of $T_{R}^{c}(V)$. 

  \item If $C$ is a $R$-coring and $\psi^{V,C}$ is compatible with the comultiplication and the counit of $C$, then $\psi _{\ast }^{V,C}$ is a twisting map of corings. Moreover, $\psi _{\ast }^{V,C}$ is graded, provided that $C$ is graded and $\psi ^{V,C}$ maps $C_{p}\otimes V$ to $V\otimes C_{p}$ for all $p$. In this case we shall use the notation $\psi ^{V,C}_{p,q}:=\psi^{V,C}_{p}\vert_{C_{p}\otimes V^{\ot q}}$.

  \item If, in addition $A$ is a connected $R$-ring and $\psi _{\ast }^{\overline{A},C}$ is compatible with the comultiplication and the counit of $C$, then the twisting map $\psi _{\ast}^{\overline{A},C}$ is a morphism of chain complexes from $(C\otimes \Omega _{\ast }(A),{\Id}_{C}\otimes \partial _{\ast })$ to $(\Omega _{\ast}(A)\otimes C,\partial_{\ast }\otimes {\Id}_{C}).$

  \item If $(C,d_{\ast })$ is a chain coring and $\psi^{\overline{A},C}$ is a morphism of complexes as in (3), then $\psi_{\ast }^{\overline{A},C}$ is a twisting map of chain corings.
\end{enumerate}
\end{proposition}

\begin{remark}\label{re:left-right_symmetry}
  If we apply  Proposition \ref{pr:phi(V,W)} and Proposition  \ref{pr:psi(V,W)} to the same map $\varphi ^{V,W}=\psi ^{V,W},$  then we get two different morphisms $\varphi _{\ast }^{V,W}$ and $\psi _{\ast }^{V,W}$, that can be seen as left-right symmetric version of each other. By analogy, we can also define the morphisms
\begin{equation*}
  \widetilde{\varphi }{}_{\ast }^{V,B}:B\otimes T_{R}^{c}(V)\rightarrow T_{R}^{c}(V)\otimes B\quad \text{and\quad}\widetilde{\varphi }{}_{\ast}^{\overline{A},B}:B\otimes \Omega _{\ast}(A)\rightarrow \Omega _{\ast}(A)\otimes B,
\end{equation*}%
  the symmetric versions of $\varphi ^{A,W}$ and $\varphi ^{\overline{A},B},$ respectively. Again by symmetry, there are morphisms
\begin{equation*}
  \widetilde{\psi {}}{}_{\ast }^{D,W}:T_{R}^{c}(W)\otimes D\rightarrow D\otimes T_{R}^{c}(W) \text{\quad and\quad} \widetilde{\psi}{}_{\ast }^{\,D,\overline{B}}:\Omega _{\ast}(B)\otimes D\rightarrow D\otimes \Omega_{\ast }(B),
\end{equation*}%
  for any $R$-coring $D.$ For the sake of completeness, let us mention that the results from Proposition \ref{pr:phi(V,W)} and Proposition \ref{pr:psi(V,W)} can be easily dualized. In this way we obtain twisted tensor products of (graded or chain) $R$-rings in which one of the factors is either $T_{R}^{a}(V)$ or $\Omega  ^{\ast }(C)$. We do not state the dual results in detail, as we shall not use them in this paper.
\end{remark}

\begin{fact}[The entwining maps $\protect\lambda $ and $\protect\nu .$]\label{fa:Lambda_Nu}
  Let  $A$ and $B$ be two connected strongly graded $R$-rings. Then it makes sense to consider the almost-Koszul pairs $(A,T(A))$ and $(B,T(B)).$  For ease of notation,  we shall write $C$ and $D$ instead of $T(A)$ and $T(B)$, respectively. Recall that, by definition, $(A,C)$ and $(B,D)$ are endowed with two $R$-bimodule isomorphisms  $\theta _{C,A} : C_{1}\rightarrow A^{1}$ and $\theta _{D,B} : D_{1}\rightarrow B^{1}$.

  We now assume, in addition, that $\sigma: B\ot A\to A\ot B$ is a given invertible graded twisting map. Obviously, the inverse  $\sigma^{-1}$ of $\sigma$ is also a twisting map or rings. Note that $(\sigma^{-1})^{p,q}=(\sigma^{q,p})^{-1}$. We claim that, under these assumptions, there is a graded entwining map $\lambda: C\ot B\to B\ot C$ that extends in a  certain sense the inverse of $\sigma$. Indeed, by taking $\varphi^{B,\overline{A}} := \sigma^{-1}\vert_{\overline{A}\ot B}$  in Proposition  \ref{pr:phi(V,W)}, we get an entwining map $\varphi^{B,\overline{A}}_{\ast}$ between $\Omega_{\ast}(A)$ and $B$, which is a morphism of complexes. Since any right $R$-module is flat we have
\[
  \Ho_{p}(\Omega_{\ast}(A)\ot B)\cong  \Ho_{p}(\Omega_{\ast}(A))\ot B= C_p\ot B \quad \text{and} \quad \Ho_{p}(B\ot \Omega_{\ast}(A))\cong B  \ot  \Ho_{p}(\Omega_{\ast}(A))=B\ot C_p,
\]
  Therefore, by Proposition \ref{pr:cotwist} (2), the induced morphism $\lambda: C\ot B\to B\ot C$ is a graded entwining map.

  By symmetry (see Remark \ref{re:left-right_symmetry}), if we take $\varphi^{\overline{B},A}$ to be the restriction of $\sigma^{-1}$ to $A\ot \overline{B}$, then $\widetilde{\varphi}{}^{\,\overline{B},A}_{\ast}$ induces another graded entwining map $\nu:A\ot D\to D\ot A$.
\end{fact}

\begin{fact}[The twisting map $\protect\tau' $.]\label{fa:Tau}
  Under the same  assumptions as above, we can now construct a  twisting map $\tau$ between  $C$ and $D$. We apply Proposition \ref{pr:psi(V,W)} for $\psi^{\overline{B},C}:=\lambda\vert_{C\ot \overline{B}}$ to get a twisting map $\psi^{\overline{B},C}_{\ast}:C\ot\Omega_{\ast} (B)\to\Omega_{\ast}(B)\ot C$   of graded corings. In fact, if we regard  $C$ as a chain coring with trivial differential maps, then     $\psi^{\overline{B},C}_{\ast}$ is a twisting map of chain corings, so it induces a graded twisting map $\tau':C\ot D\to D\ot C$, cf. Proposition \ref{pr:cotwist} (3).

  Some useful properties of $\lambda$, $\tau'$ and $\nu$ are collected in the theorem below.
\end{fact}

\begin{theorem}\label{th:Tau_Lambda_Nu}
  Let   $\sigma :B\otimes A\rightarrow A\otimes B$  be an invertible graded twisting map. The twisting map $\tau' $ and the entwining maps $\lambda$ and $\nu$ constructed in  \S\ref{fa:Tau} and \S\ref{fa:Lambda_Nu} verify the following relations:
\begin{align}
  ({\Id}_{B^{p}}\otimes \theta _{C,A})\circ \lambda _{1}^{p} &  =\left(\sigma ^{p,1}\right) ^{-1}\circ (\theta  _{C,A}\otimes {\Id}_{B^{p}}),    \label{Cond1} 
	\\
  \lambda _{p}^{1}\circ ({\Id}_{C_{p}}\otimes \theta _{D,B}) & =(\theta_{D,B}\otimes {\Id}_{C_{p}})\circ \tau_{p,1}',  \label{Cond2}
	\\
  \nu _{p}^{1}\circ (\theta _{C,A}\otimes {\Id}_{D_{p}})& =({\Id} _{D_{p}}\otimes \theta _{C,A})\circ \tau_{1,p}'.  \label{Cond3}
	\\
  (\theta_{D,B}\ot\Id_{A^p})\circ\nu^p_1& =\left(\sigma^{1,p}\right)^{-1} \circ(\Id_{A^p}\ot\theta_{D,B}).\label{Cond4}
\end{align}
\end{theorem}

\begin{proof}
  We know that $C=T(A)$ is the homology of $(\Omega_{\ast}(A),\partial_{\ast})$, so  $C_1=\overline{A}/{\overline{A}}^2$. Let us denote the class of $a\in\overline{A}$ by $[a]$. By definition, $\varphi^{B,\overline{A}}_1=\sigma^{-1}\vert_{\overline{A}\ot B}$. Therefore, for any $a\in\overline{A}$ and $b\in B^q$, we have 
  \[
    \lambda_1 ([a]\ot b)=\sum b_{\sigma^{-1}}\ot [a_{\sigma^{-1}}].
  \]
  On the other hand, $\theta_{C,A}$ maps $[a]$ to the homogeneous component of degree $1$ of $a$. The equation \eqref{Cond1} now follows by a simple computation.

  To prove the second identity we first note that  $D_1=\overline{B}/{\overline{B}}^2$ and $\psi^{\overline{B},C}_1=\psi^{ \overline{B},C} =\lambda\vert_{C\ot \overline{B}}$. Since $\tau'$ is the morphism induced by $\psi^{ \overline{B},C}_{\ast}$ and $\theta_{D,B}$ maps the class of $b\in \overline{B}$ modulo $\overline{B}^2$ to its homogeneous component of degree $1$, we conclude the proof as in the case of the previous relation. To show that \eqref{Cond3} holds one proceeds in a similar way.
\end{proof}

\begin{fact}[Notation and assumptions.]\label{notation}
  Our goal is to show that the pair $(A\otimes _{\sigma}B,T(A) \otimes _{{\tau} }T(B))$ is almost-Koszul, see Remark \ref{re:teuh} for the definition of ${\tau}$. Then we shall show that this pair is Koszul, provided that $A$ and $B$ are Koszul $R$-rings. In fact we are able to prove these results for any almost-Koszul pairs $(A,C)$ and $(B,D)$ which are equipped with the following extra data:
\begin{enumerate}
  \item An invertible graded twisting map $\sigma :B\otimes A \rightarrow A\otimes B$. 

  \item An invertible graded twisting map $\tau' :C\otimes D\rightarrow D\otimes C$.

  \item An invertible entwining map $\lambda :C\otimes B\rightarrow B\otimes C$.

  \item An invertible entwining map $\nu :A\otimes D\rightarrow D\otimes A.$
\end{enumerate}
  \noindent We assume that the conditions \eqref{Cond1}-\eqref{Cond4} are satisfied, where $\theta _{C,A}:C_{1}\rightarrow A^{1}$ and $\theta_{D,B}:D_{1}\rightarrow B^{1}$ are the isomorphisms corresponding to $(A,C)$ and $(B,D),$ respectively. We have already seen that, for any invertible twisting map $\sigma :B\otimes A\rightarrow A\otimes B,$ the pairs $(A,T(A))$ and $(B,T(B))$ fulfill the conditions \eqref{Cond1}-\eqref{Cond4}, where $\tau'$, $\lambda$ and $\nu$ are as in Theorem \ref{th:Tau_Lambda_Nu}.   

  In the case when $p=q=1$ the above identities imply the following equation:
\begin{equation*}
	\sigma ^{1,1}\circ \left( \theta _{D,B}\otimes \theta _{C,A}\right) \circ \tau _{1,1}'=\theta _{C,A}\otimes \theta _{D,B}.
\end{equation*}%
  Equivalently, for $c\in C_{1}$ and $d\in D_{1},$ we have
\begin{equation}\label{ec:cond1}
	\sum \theta _{C,A}(c_{\tau' })_{\sigma }\otimes \theta _{D,B}(d_{\tau'})_{\sigma }=\theta _{C,A}(c)\otimes \theta _{D,B}(d).
\end{equation}
\end{fact}

\begin{remark}\label{re:teuh}
  If $\tau' :C\otimes D\rightarrow D\otimes C$ is a graded twisting map of graded $R$-corings, then the map ${\tau}$ defined by ${\tau}_{p,q}:=(-1)^{pq}\tau _{p,q}'$ is also a graded twisting map between $C$ and $D$.
\end{remark}

\begin{proposition}\label{prekoszul}
  With the notation and assumptions from \S \emph{\ref{notation}} and the preceding remark, the pair $(A\otimes _{\sigma }B,C\otimes _{{\tau}}D)$ is almost-Koszul.
\end{proposition}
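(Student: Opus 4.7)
The plan is to verify the two axioms in \S\ref{de:quasi-Koszul}. Both structures are connected graded: $(A\otimes_\sigma B)^0 = R \cong (C\otimes_{\hat\tau} D)^0$ by construction, and degree counting yields canonical identifications
\[
(A\otimes_\sigma B)^1 \cong A^1 \oplus B^1, \qquad (C\otimes_{\hat\tau} D)^1 \cong C^1 \oplus D^1.
\]
The natural candidate for the structure isomorphism is $\theta := \theta_{C,A}\oplus\theta_{D,B}$, and the task reduces to checking the identity $m^{1,1}\circ(\theta\otimes\theta)\circ\Delta^{1,1}=0$ on each of the three summands of
\[
(C\otimes_{\hat\tau} D)^2 \;\cong\; C^2 \;\oplus\; (C^1\otimes D^1) \;\oplus\; D^2.
\]

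For $c\in C^2$, identified with $c\otimes 1_D$, the identities $\Delta_D(1_D)=1_D\otimes 1_D$ and $\hat\tau(c'\otimes 1_D)=1_D\otimes c'$ (the latter from the counit compatibility of $\tau$, together with $\hat\tau^{1,0}=\tau^{1,0}$) yield $\Delta^{1,1}(c\otimes 1_D) = \sum (c_{(1)}^1\otimes 1_D)\otimes(c_{(2)}^1\otimes 1_D)$. Applying $\theta\otimes\theta$ and then the multiplication of $A\otimes_\sigma B$---which on tensors of the form $(a\otimes 1_B)(a'\otimes 1_B)$ reduces to ordinary multiplication in $A$---produces $\sum \theta_{C,A}(c_{(1)}^1)\theta_{C,A}(c_{(2)}^1) \otimes 1_B = 0$ by pre-Koszulness of $(A,C)$. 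The case $d\in D^2$ is symmetric and uses pre-Koszulness of $(B,D)$.

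The main obstacle, and the heart of the proof, is the mixed summand $c\otimes d$ with $c\in C^1$ and $d\in D^1$; it is precisely here that the sign twist in $\hat\tau$ and the compatibility data of \S\ref{notation} come into play. Expanding $(\Delta_C\otimes\Delta_D)(c\otimes d)$ into four tensor monomials, then applying $C\otimes\hat\tau\otimes D$ and extracting the $(1,1)$-component of $(C\otimes_{\hat\tau} D)^1\otimes(C\otimes_{\hat\tau} D)^1$, exactly two terms survive:
\[
\Delta^{1,1}(c\otimes d) = -\sum\nolimits_\tau (1_C\otimes d_\tau)\otimes(c_\tau\otimes 1_D) + (c\otimes 1_D)\otimes(1_C\otimes d),
\]
the minus sign coming from $\hat\tau^{1,1}=-\tau^{1,1}$. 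Applying $\theta\otimes\theta$ and the twisted multiplication of $A\otimes_\sigma B$ (using $\sigma(1_B\otimes 1_A) = 1_A\otimes 1_B$ on the second term) gives
\[
-\sum\nolimits_{\tau,\sigma}\theta_{C,A}(c_\tau)_\sigma\otimes\theta_{D,B}(d_\tau)_\sigma \;+\; \theta_{C,A}(c)\otimes\theta_{D,B}(d),
\]
and invoking the compatibility identity (\ref{ec:cond1}) rewrites the first sum as $\theta_{C,A}(c)\otimes\theta_{D,B}(d)$, so the two terms cancel. Hence $m^{1,1}\circ(\theta\otimes\theta)\circ\Delta^{1,1}=0$ on all three summands, and $(A\otimes_\sigma B, C\otimes_{\hat\tau} D)$ is pre-Koszul.
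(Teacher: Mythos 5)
Your proof is correct and follows essentially the same route as the paper's: the same decomposition of $(C\otimes_{\hat\tau}D)^2$ into three summands, the same reduction of the outer cases to the pre-Koszulness of $(A,C)$ and $(B,D)$, and the same use of the sign in $\hat\tau^{1,1}$ together with the compatibility identity (\ref{ec:cond1}) to handle the mixed summand $C^1\otimes D^1$. No gaps.
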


\begin{proof}
  It is obvious that $A\otimes _{\sigma }B$ and $C\otimes_{{\tau}}D$ are connected. By definition we have 
\begin{equation*}
  (A\otimes _{\sigma }B)^{1}=(R\otimes B^{1})\oplus (A^{1}\otimes R)\quad \text{and}\quad (C\otimes _{{\tau}}D)_{1}=(R\otimes D_{1})\oplus (C_{1}\otimes R).
\end{equation*}%
  We define $\theta :(C\otimes _{{\tau}}D)_{1}\rightarrow (A\otimes_{\sigma }B)^{1}$ such that it coincides with $(\theta_{C,A}\otimes {\Id}_{R})$ and $({\Id}_{R}\otimes \theta_{D,B})$ on $C_{1}\otimes R$ and $R\otimes D_{1},$ respectively. We claim that $\theta $ satisfies the relation \eqref{ec:almost-koszul}. Indeed, if $\cdot _{\sigma }$ denotes the multiplication on $A\otimes _{\sigma }B$, then we have to show that
\begin{equation}\label{koszulcondition}
	\sum\limits \theta (x_{(1,1)})\cdot _{\sigma }\theta (x_{(2,1)})=0,
\end{equation}%
  for any $\,x$ in $(C\otimes _{{\tau}}D)_{2}=(C_{2}\otimes R)\oplus (C_{1}\otimes D_{1})\oplus (R\otimes D_{2}).$ Hence for  proving \eqref{koszulcondition} we may assume that $x$ belongs to one of the three direct summands. Let us consider the case $x\in C_{2}\otimes R$, so $x=c\otimes 1$ for some $c\in C_{2}$. By definition of the comultiplication on $C\otimes _{{\tau}}D$, we have
\begin{equation*}
	\Delta _{1,1}(c\otimes 1)=\sum\limits (c_{(1,1)}\otimes 1)\otimes (c_{(2,1)}\otimes 1).
\end{equation*}%
  Henceforth, in this case we have
\begin{equation*}
	\sum\limits \theta (x_{(1,1)})\cdot _{\sigma }\theta (x_{(2,1)})=\sum\limits (\theta _{C,A}(c_{(1,1)})\otimes 1)\cdot _{\sigma }(\theta _{C,A}(c_{(2,1)})\otimes 1)=\sum\limits \theta _{C,A}(c_{(1,1)})\theta_{C,A}(c_{(2,1)})\otimes 1=0,
\end{equation*}%
  since $(A,C)$ is an almost-Koszul pair. If $x\in C_{0}\otimes D_{2}$, the computations are done in a similar way.

  Let us finally assume that $x=c\otimes d$ with $c\in C_{1}$, $d\in D_{1}$. Since $c\in C_{1}$ we have $\Delta {c}=1\otimes c+c\otimes 1,$ and ${\tau}_{1,1}=-\tau _{1,1}'$. Thus 
\begin{equation*}
	\Delta (c\otimes d)=(c\otimes d)\otimes (1\otimes 1)+(c\otimes 1)\otimes (1\otimes d)+\ (1\otimes 1)\otimes (c\otimes d)-1\otimes \tau_{1,1}'(c\otimes d)\otimes 1.
\end{equation*}%
	The component of the latest expression belonging to $(C\otimes_{{\tau}}D)_{1}\otimes (C\otimes_{{\tau}}D)_{1}$ is precisely
\begin{equation*}
  \Delta _{1,1}(c\otimes d)=(c\otimes 1)\otimes (1\otimes d)-\sum\limits (1\otimes d_{\tau' })\otimes (c_{\tau' }\otimes 1).
\end{equation*}%
  Henceforth, applying first $\theta \otimes \theta $ and then the product in $A\otimes _{\sigma }B$, we get
\begin{equation*}
  \sum\limits \theta (x_{(1,1)})\cdot _{\sigma }\theta (x_{(2,1)})=\theta_{C,A}(c)\otimes \theta _{D,B}(d)-\sum\limits \theta _{C,A}(c_{\tau'})_{\sigma }\otimes \theta _{D,B}(d_{\tau'})_{\sigma }.
\end{equation*}%
  In view of the relation \eqref{ec:cond1} it follows that the equation \eqref{ec:almost-koszul} holds in this case as well.
\end{proof}

\begin{theorem}\label{thm:koszul}
  We keep the notation and the assumptions from  \S\emph{\ref{notation}}. If $(A,C)$ and $(B,D)$ are Koszul, then $(A\otimes _{\sigma}B,C\otimes _{{\tau}}D)$ is Koszul too.
\end{theorem}

\begin{proof}
  We have already proved that $(A\otimes_{\sigma}B,C\otimes_{{\tau}}D)$ is an almost-Koszul pair. Let $\K_{\ast}$ be the complex that is  obtained from $\K_{\ast }^{l}(A\otimes_{\sigma}B,C \otimes_{{\tau}}D)$ dropping the part in degree $-1$. We define $(\K_{\ast }', d_{\ast }')$ and $(\K_{\ast }'',d_{\ast }'')$ in a similar way from $\K_{\ast }^{l}(A,C)$ and $\K_{\ast }^{l}(B,D)$, respectively. We claim that $\Id_A\ot\lambda\ot\Id_D:\K_\ast'\ot\K_\ast''\to \K_\ast$ is an isomorphism of complexes. Let
\begin{equation*}
  \partial _{n}:(A\otimes _{\sigma }B)\otimes (C\otimes_{{\tau}}D)_{n}\rightarrow (A\otimes _{\sigma}B)\otimes (C\otimes _{{\tau}}D)_{n-1}
\end{equation*}
  denote the differential map in $\K_{\ast }$. We fix $p$ and $q$ such that $p+q=n$. For $c\in C_{p}$ and $d\in D_{q}$ we have
\begin{equation*}
  \Delta (c\otimes d)=\sum\limits_{u=0}^p\sum \limits_{v=0}^q(-1)^{(p-u)v}c_{(1,u)}\otimes {d_{(1,v)}}_{\tau'}\otimes {c_{(2,{p-u})}}_{\tau'}\otimes d_{(2,q-v)}.
\end{equation*}%
  Thus the component of $\Delta (c\otimes d)$ in $(C\otimes_{{\tau}}D)_{1}\otimes (C\otimes_{{\tau}}D)_{p+q-1}$ is obtained from the above equality by dropping all summands but the ones with either $u=1$ and $v=0,$
  or $u=0$ and $v=1$. Therefore,
\begin{equation*}
    \Delta _{1,p+q-1}(c\otimes d)=\sum\limits c_{(1,{1})}\otimes 1\otimes c_{(2,{p-1})}\otimes d+(-1)^{p}\sum\limits 1\otimes {d_{(1,{1})}}_{\tau' }\otimes c_{\tau' }\otimes d_{(2,{q-1})}.
\end{equation*}%
  Hence, for any $\zeta=a\otimes b\otimes c\otimes d$ in $A\otimes B\otimes C_p\otimes D_q$, with $p+q=n$, we get
\[
  \partial _{n}\left(\zeta\right) =\sum\limits a\theta _{C,A}(c_{(1,{1})})_{\sigma}\otimes b_{\sigma}\otimes c_{({2},{p-1})}\otimes d+\sum\limits (-1)^{p}a\otimes b\theta_{D,B}({d_{(1,{1})}}_{\tau' })\otimes c_{\tau' }\otimes d_{({2},{q-1})}.
\]
  To make computations with morphisms in the category of $R$-bimodules we use string representation of morphisms in a tensor category, which is explained for example in \cite[Chapter XIV.1]{Kassel95a}. Each morphism will be represented downwards, as a black bead. Sometimes, to avoid confusion, we shall write the name of the morphism near the corresponding bead. For the identity of a bimodule we shall draw only the string. The tensor  product and the composition of two morphisms will be represented by horizontal and vertical juxtaposition, respectively. In conclusion, every string diagram may be interpreted as the representation of a composition $f_{1}\circ \cdots \circ f_{n},$ where each $f_{i}$ is a tensor product $f_{i}={\Id}_{X_{i}}\otimes g_{i}\otimes {\Id}_{Y_{i}}.$ The corresponding diagrams will be drawn one under the other, starting with $f_{n}$ on the top.

  As usual, the multiplication of an $R$-ring is drawn by joining two strings. For the components $\Delta_{p,q}$ of the comultiplication of a coring $C$ we shall use the `dual' representation, in which the string representing $C_{p+q}$ is split in two strings that corresponds to $C_p$ and $C_q$, respectively.

  As an example, let us have a look at the picture below, which represents $\partial _{n}$. Here the beads symbolize the morphism $\theta_{D,B}$ and $\theta_{C,A}$, respectively. Note the notation of $\sigma$ as a crossing. For $\tau'$ and $\lambda $ we shall use the inverse crossing representation, to  put stress on the fact that they were obtained using $\sigma ^{-1}$.
 \begin{equation*}
   \begin{array}{c}
     \includegraphics{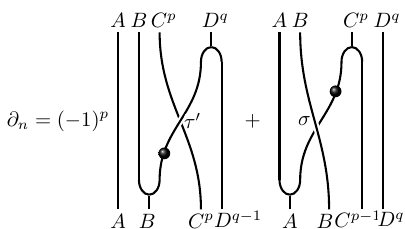}  
   \end{array}
   \label{f1}
\end{equation*} 
  Let $\delta _{n}:=({\Id}_{A}\otimes \lambda^{-1} \otimes {\Id}_{D})\circ \partial _{n}\circ ({\Id}_{A}\otimes \lambda \otimes {\Id}_{D})$. Then $\delta _{n}=\delta _{n}^{\prime }+\delta _{n}^{\prime \prime }, $ where $\delta _{n}^{\prime }$ is the first term of $\partial _{n}$ composed to the left by ${\Id}_{A}\otimes \lambda^{-1} \otimes {\Id}_{D}$ and to the right by ${\Id}_{A}\otimes \lambda \otimes {\Id}_{D}.$ The map $\delta _{n}^{\prime \prime }$ is obtained in a similar way from the second term of $\partial _{n}.$ 
  
  The computation of $\delta _{n}^{\prime }$ is performed in the diagram \eqref{f2}. For the first equality we are using (\ref{Cond2}). The second one means that $\lambda $ is compatible with the multiplication of $B, $ while the third one is obvious, as $\lambda $ and $\lambda ^{-1}$ are inverses each other.
\begin{equation}
  \begin{array}{c}
    \includegraphics[scale=1]{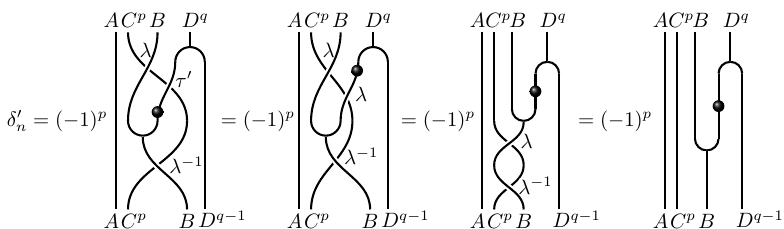}
  \end{array}\label{f2}
\end{equation}
  The morphism $\delta _{n}^{\prime \prime }$ is computed in diagram \eqref{f3} using the same method. To deduce the first identity we use (\ref{Cond1}). The second equality follows by the fact that the coring structure of $C$ and the entwining map $\lambda$ are compatible. The third one is obvious, as $\lambda ^{-1}$ is the inverse of $\lambda.$
\begin{equation}
  \begin{array}{c}
    \includegraphics{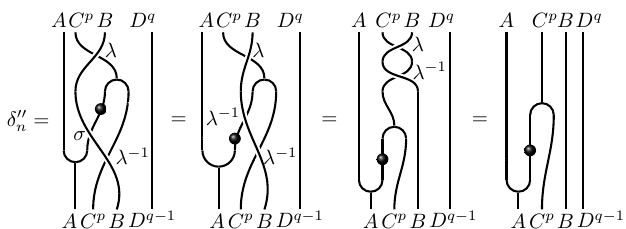}
  \end{array}\label{f3}
\end{equation}
  The computations in \eqref{f2} and \eqref{f3} shows that $\delta _{n}^{\prime}=(-1)^{p }{\Id}_{\K_{p }^{\prime}}\otimes d^{\prime \prime }_q$ and $\delta _{n}^{\prime \prime}=d^{\prime }_p\otimes {\Id}_{\K_{q}^{\prime \prime}}$ for any $p$ and $q$ such that $p+q=n$. Therefore, $\K_{\ast}$ is the tensor product of the  complexes $\K_{\ast }'$ and $\K_{\ast}''.$

  Let us now assume that $(A,C)$ and $(B,D)$ are Koszul. By definition, then the complexes $\K_{\ast }'$ and $\K_{\ast}''$ are acyclic and their homology groups in degree zero are isomorphic to $R.$ If $\K_{\ast}$ has the same properties, then $\K_{\ast }^{l}(A\otimes _{\sigma}B,C\otimes_{{\tau}}D)$ will be exact. Using K\"{u}nneth formula \cite[Theorem 3.6.3]{We} and the fact that $R$ is semisimple, we get
\begin{equation*}
  \mathrm{H}_{n}(\K_{\ast }^{l}(A\otimes _{\sigma}B,C\otimes_{{\tau}}D))\cong\textstyle\bigoplus\limits\limits_{p+q=n} \mathrm{H}_{p}(\K_{\ast}^{l}(A,C))\otimes \mathrm{H}_{q}(\K_{\ast }^{l}(B,D)).
\end{equation*}%
  Thus $\K_{n}^{l}(A\otimes _{\sigma }B,C\otimes_{{\tau}}D)$ clearly is acyclic. On the other hand,
\begin{equation*}
  \mathrm{H}_{0}(\K_{\ast }^{l}(A\otimes _{\sigma }B,C\otimes_{{\tau}}D))\cong \mathrm{H}_{0}(\K_{\ast}^{l}(A,C))\otimes \mathrm{H}_{0}(\K_{\ast}^{l}(B,D))\cong R\otimes R\cong R.\qedhere
\end{equation*}%

\end{proof}

\begin{corollary}
  Let $\sigma :B\otimes A\rightarrow A\otimes B$ be an invertible graded twisting map between two Koszul $R$-rings. Then $A\otimes _{\sigma }B$ is Koszul and $T(A\otimes _{\sigma}B)\cong T(A)\otimes _{{\tau}}T(B).$
\end{corollary}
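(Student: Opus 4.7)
The plan is to reduce this corollary to Theorem~\ref{thm:koszul} applied to the canonical Koszul pairs attached to $A$ and $B$. Since $A$ and $B$ are Koszul rings, Theorem~\ref{te:alg-echiv} tells us that $A$ and $B$ are generated in degree one and that the pairs $(A,T(A))$ and $(B,T(B))$ are Koszul. To invoke Theorem~\ref{thm:koszul}, I need to equip these two pre-Koszul pairs with the extra data of~\S\ref{notation}: an invertible graded twisting map $\sigma$ between $A$ and $B$ (which is given, assumed invertible as in the preceding discussion), an invertible graded twisting map of corings $\tau\colon T(A)\otimes T(B)\to T(B)\otimes T(A)$, and an invertible graded entwining map $\lambda\colon T(A)\otimes B\to B\otimes T(A)$, subject to the compatibility relations (\ref{ec:cond2}) and (\ref{ec:cond3}).

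The second step is to produce $\tau$ and $\lambda$. Here I would appeal directly to Proposition~\ref{pr:TwistTB-A}: given that $A$ and $B$ are generated in degree one and $\sigma$ is invertible, it furnishes exactly such $\tau$ and $\lambda$, and it is remarked in~\S\ref{notation} that the canonical pairs $(A,T(A))$ and $(B,T(B))$ together with these $\tau$ and $\lambda$ satisfy the required identities. So all hypotheses of Theorem~\ref{thm:koszul} are fulfilled.

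The third step is to apply Theorem~\ref{thm:koszul} to conclude that $(A\otimes_{\sigma}B,\,T(A)\otimes_{\hat{\tau}}T(B))$ is a Koszul pair. By the implication (b)$\Rightarrow$(a) of Theorem~\ref{te:alg-echiv}, the existence of a Koszul pair with first component $A\otimes_{\sigma}B$ implies that $A\otimes_{\sigma}B$ is a Koszul $R$-ring in the usual sense. For the isomorphism $T(A\otimes_{\sigma}B)\cong T(A)\otimes_{\hat{\tau}}T(B)$, I would invoke the uniqueness lemma preceding \S\ref{Koszul-coring}: the pair $(A\otimes_{\sigma}B,\,T(A\otimes_{\sigma}B))$ is also Koszul, so comparing it with $(A\otimes_{\sigma}B,\,T(A)\otimes_{\hat{\tau}}T(B))$ yields the desired isomorphism of graded $R$-corings.

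I do not foresee any substantial obstacle: the real work has already been carried out in Proposition~\ref{pr:TwistTB-A} (construction of $\tau$ and $\lambda$ from $\sigma$) and in Theorem~\ref{thm:koszul} (the Künneth-type exactness argument). The only points that require a moment of care are (i) checking that $\sigma$ may legitimately be assumed invertible, which is needed to apply Proposition~\ref{pr:TwistTB-A}, and (ii) noting that the compatibility conditions of~\S\ref{notation} hold with $\theta_{C,A}=\theta_{T(A),A}$ and $\theta_{D,B}=\theta_{T(B),B}$, which is built into the construction of $\tau$ and $\lambda$ via relations (\ref{CondTau})--(\ref{CondTau3}).
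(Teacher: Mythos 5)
Your proposal is correct and follows exactly the same route as the paper, whose proof is a one-liner ("apply Theorem \ref{thm:koszul} with the $\tau$ and $\lambda$ of Proposition \ref{pr:TwistTB-A}"); you merely spell out the steps the paper leaves implicit, namely invoking Theorem \ref{te:alg-echiv} for the Koszulity of $(A,T(A))$ and $(B,T(B))$ and of $A\otimes_{\sigma}B$, and the uniqueness lemma for the identification $T(A\otimes_{\sigma}B)\cong T(A)\otimes_{\hat{\tau}}T(B)$. Your remark that $\sigma$ must be assumed invertible (as the paper does throughout, though the corollary's statement omits it) is also apt.
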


\begin{proof}
  Let $\tau'$, $\lambda$ and $\nu$ be the maps that we constructed in \S\ref{fa:Lambda_Nu} and \ref{fa:Tau}. By the preceding theorem and Theorem \ref{te:caracterizare_Koszul}, the pairs  $\left(A\ot_\sigma B,T(A)\ot_{{\tau}}T(B)\right)$ and $\left(A\ot_\sigma B,T(A\ot_{\sigma}B)\right)$ are Koszul. To end the proof we apply Corollary \ref{thm:CharacterizationKoszul}.
\end{proof}

\begin{corollary}\label{cor:hdim}
  Let $A$ and $B$ be Koszul rings over a separable $\Bbbk$-algebra $R$. Let $\sigma$ be an invertible graded twisting map between $A$ and $B$. If $\Hd A = n$ and $\Hd B =m$, then $\Hd A\otimes_\sigma B\leq n+m$. Moreover, $\Hd(A\otimes_\sigma B)= m+n$ if and only if $T_n(A)\otimes T_m(B)\neq 0$. 
\end{corollary}

\begin{proof}
  By Theorem \ref{thm:Hdim}, we have $T_n(A)\neq 0$, $T_m(B)\neq 0$ and $T_p(A) = 0 = T_q(B)$, for all $p>n$ and $q>m$. It follows that the homogeneous component of degree $m+n$ of $T(A)\otimes_\tau T(B)$ is $T_n(A) \otimes T_m(B)$. Since the homogeneous component of degree $r>m+n$ of the twisted tensor product coring is zero and $A\ot_{\sigma}B$ is a Koszul ring, we conclude the proof using  Theorem \ref{thm:Hdim} once again. 
\end{proof} 

\section{The Hochschild (co)homology of a twisted tensor product}

  As usual, when we speak about the Hochschild (co)homology of an $R$-ring we assume that $R$ is a separable algebra over a field $\Bbbk$ and that any $R$-bimodule is symmetric as a $\Bbbk$-bimodule (with respect to the bimodule structure induced by restriction of scalars). Let $\sigma:B\ot A \to A\ot B$ be an invertible graded twisting map between two Koszul $R$-rings. By Theorem \ref{th:Tau_Lambda_Nu} and Theorem \ref{thm:koszul} there is a twisting map $\tau$ between the corings $T(A)$ and $T(B))$ such that $\left(A\ot_{\sigma} B,T(A)\ot_{{\tau}} T(B)\right)$ is a Koszul pair. In the first part of this  section, for more flexibility,  instead of using this pair as a tool we prefer to place ourselves in the setting of \S\ref{notation}, adding the assumption that $(A,C)$ and $(B,D)$ are Koszul.

  Our aim is to show that the Hochschild homology of
  $A\ot_{\sigma} B$ can be computed as the homology of the total complex associated to a suitable double complex. A similar result will be obtained for Hochschild cohomology. Then, as an application, we introduce generalized Ore extensions of an $R$-ring $A$ and we show that such an extension is Koszul, provided that $A$ is so. Furthermore, using the results from this section, we investigate the  Hochschild cohomology of generalized Ore extensions.

\begin{fact}[The Hochschild homology of twisted tensor products of Koszul rings.] \label{fact:dcomplex}
  By Theorem \ref{thm:koszul} the pair $(A\otimes _{\sigma}B,C\otimes _{{\tau }}D)$ is Koszul, so we can use Theorem \ref{thm:HochHom} to compute the Hochschild homology of $A\otimes_{\sigma }B$. The decomposition 
\begin{equation*}
  (C\otimes_{{\tau}}D)_{n}=\textstyle\bigoplus\limits_{p+q=n}C_{p}\otimes D_{q}
\end{equation*}%
	suggests that $\K_{\ast }(A\otimes _{\sigma }B,M)$ is the total complex of a double complex $\left( \K_{\ast \ast }(A\otimes_{\sigma }B,M),\partial_{\ast\ast }^{h},\partial_{\ast \ast }^{v}\right) $. Before proving this, let us  write the latter chain complex in an equivalent way. Recall that 
\begin{equation*}
  \K_{n}(A\otimes _{\sigma }B,M) = M{\,\widehat{\otimes }\,}(C\otimes_{{\tau}}D)_{n} \cong\textstyle\bigoplus\limits_{p+q=n}(M{\,\widehat{\otimes}\,} C_{p}\wot D_{q})\cong \textstyle \bigoplus\limits_{p+q=n}D_{q}{\widehat{\otimes}\,}M{\,\widehat{\otimes}\,}C_{p}=\K_{n}^{\prime }(A\otimes_{\sigma}B,M).
\end{equation*}%
  Through these identifications, to the differential map $\partial _{n}$ of $\K_{\ast }(A\otimes _{\sigma }B,M)$ corresponds a morphism $\partial_{n}^{\prime }:\K_{n}^{\prime }(A\otimes _{\sigma }B,M)\rightarrow \K_{n-1}^{\prime }(A\otimes_{\sigma }B,M)$. In view of Theorem \ref{thm:HochHom}, to compute $\partial'_n$ we need $\Delta _{1,n-1}^{C\otimes_{{\tau}}D}(c\otimes d)$ and $\Delta_{n-1,1}^{C\otimes_{{\tau}}D}(c\otimes d)$ for any $c\otimes d\in C_{p}\otimes D_{q}$ with $p+q=n$. The first element is given by the formula
\begin{equation*}
  \Delta _{1,n-1}^{C\otimes _{{\tau}}D}(c\otimes d)=\sum c_{(1,1)}\otimes 1\otimes c_{(2,{p-1})}\otimes d+(-1)^{p}\sum 1\otimes {d_{(1,1)}}_{\tau'}\otimes c_{\tau' }\otimes d_{(2,{q-1})},
\end{equation*}
  cf. the proof of Theorem \ref{thm:koszul}. We can compute the second element in a similar way, obtaining
\begin{equation*}
  \Delta _{n-1,1}^{C\otimes _{{\tau}}D}(c\otimes d)=\sum c\otimes d_{(1,{q-1})}\otimes 1\otimes d_{(2,1)}+(-1)^{q}\sum  c_{(1,{p-1})} \otimes d_{\tau' }\otimes {c_{(2,1)}}_{\tau'}\otimes 1.
\end{equation*}%
  Let $m\in M.$ Taking into account the relation (\ref{eq:pdn}) it follows that $\partial _{n}^{\prime }(d{\,\widehat{\otimes}\,m\,\widehat{\otimes } \,c)}$ can be written as a sum of two elements. The first one, belonging to $D_{q}{\,\widehat{\otimes }\,}M{\,\widehat{\otimes }\,} C_{p-1},$ has the following form
\begin{align}
  \partial _{p,q}^{h}(d{\,\widehat{\otimes }\,}m{\,\widehat{\otimes }\,}c)& =\sum \;d{\,\widehat{\otimes}\,}m\left( \theta _{C,A}(c_{(1,1)}) \otimes 1\right) {\,\widehat{\otimes }\,}c_{(2,{p-1})}+  \label{dh}
			\\
  & \hspace*{0.75cm}+(-1)^{p}\sum d_{\tau' }{\,\widehat{\otimes}\,}\left({\theta }_{C,A}({c_{(2,1)}}_{\tau'})\otimes 1\right) m{\,\widehat{\otimes}\,}c_{(1,{p-1})}.  \notag
\end{align}%
  The other one, which is an element of $D_{q-1}{\,\widehat{\otimes}\,}M{\,\widehat{\otimes }\,}C_{p},$ can be written as $(-1)^{p}\partial_{p,q}^{v}(d{\,\widehat{\otimes}\,}m{\,\widehat{\otimes
}\,}c),$ where
\begin{align}
  \partial _{p,q}^{v}(d{\,\widehat{\otimes }\,}m{\,\widehat{\otimes }\,}c) & =\sum \;d_{(2,{q-1})}{\,\widehat{\otimes }\,}m\left(1\otimes \theta_{D,B}({d_{(1,1)}}_{\tau' })\right) {\,\widehat{\otimes}\,} c_{\tau'} 
	 \label{dv}
				\\
  & \hspace*{0.75cm}+(-1)^{q}\sum d_{(1,{q-1})}{\,\widehat{\otimes}\,}\left(1\otimes \theta _{D,B}(d_{(2,1)})\right) m{\,\widehat {\otimes }\,} c. \notag
\end{align}%
 Thus the complexes $\left(\K_{\ast}(A\otimes_{\sigma}B,M),\partial _{\ast }\right) $ and $\left(\K_{\ast }^{\prime }(A\otimes_{\sigma }B,M),\partial_{\ast }^{\prime }\right) $ are isomorphic. We have also proved that the restriction of the differential map $\partial _{n}^{\prime }$ to $D_{q}{\,\widehat{\otimes }\,}M{\,\widehat{\otimes }\,}C_{p}$ satisfies the relation $\partial_{n}'=\partial _{p,q}^{h}+(-1)^{p}\partial _{p,q}^{v}$, where     $\partial_{p,q}^{h}$ and $\partial_{p,q}^{v}$ are defined as above. Let $\K_{p,q}(A\otimes _{\sigma }B,M):=D_{q}{\widehat{\otimes}\,}M{\,\widehat{\otimes }}C_{p}.$ By a straightforward but tedious computation, based on the relations \eqref{Cond1}-\eqref{Cond4}, one shows that $\left( \K_{\ast \ast}(A\otimes _{\sigma }B,M),\partial_{\ast \ast}^{h},\partial _{\ast \ast}^{v}\right) $ is a double complex, that is the diagram
\begin{equation}
  \begin{array}{c}
   \xymatrix{
		\K_{p,q}(A\otimes _{\sigma }B,M) \ar[d]_{\pd^v_{p,q}} 				\ar[rr]^{\pd^h_{p,q}} && \K_{p-1,q}	(A\otimes_{\sigma}B,M) 		\ar[d]^{\pd^v_{p-1,q}}
				\\
    \K_{p,q-1}(A\otimes _{\sigma }B,M)\ar[rr]_{\pd^h_{p,q-1}} && 		\K_{p-1,q-1}(A\otimes _{\sigma }B,M)
    }
  \end{array}\label{eq:dcomplex}
\end{equation}%
  is commutative for all nonzero $p$ and $q$. Obviously, its  total complex is $\left(\K_{\ast}^{\prime}(A\otimes _{\sigma}B,M),\partial_{\ast}^{\prime }\right) $.

  We reinterpret the double complex \eqref{eq:dcomplex} to relate the homology groups of the rows and columns with the Hochschild homology groups of $A$ and $B$, respectively. First, let us notice that $D_{q}\otimes M$ is an $A$-bimodule with respect to the actions
\begin{equation*}
 a (d\otimes m)a':=\sum d_{\nu }\otimes (a_{\nu }\otimes 1)m(a'\otimes 1),
\end{equation*}%
  where for the left module structure we used the entwining map $\nu :A\otimes D\rightarrow D\otimes A$.
  Similarly, we can endow $M\otimes C_{p}$ with a $B$-bimodule structure by
\begin{equation*}
  b(m\otimes c)b':=\sum (1\otimes b)m(1\otimes b_{\lambda }')\otimes c_{\lambda}.
\end{equation*}%
  Using the relations \eqref{Cond2} and \eqref{Cond3}, it follows that the complexes $\K_{\ast }(B,M\otimes C_{p})$ and $\K_{\ast}(A,D_{q}\otimes M)$ are isomorphic to $\K_{p\ast }(A\otimes_{\sigma}B,M)$ and $\K_{\ast q}(A\otimes_{\sigma}B,M)$, respectively.

  There are two filtrations on the total complex of $\K_{\ast\ast}(A\otimes _{\sigma }B,M)$, cf. \cite[Section 5.6]{We}. They give rise to two spectral sequences, both converging to the Hochschild homology of $A\otimes _{\sigma }B$ with coefficients in $M.$ For future reference, we summarize the above results in the theorem below.
\end{fact}

\begin{theorem}\label{thm:DoubleComplexHom}
  We keep the notation and the assumptions of \S \ref{notation}. If $(A,C)$ and $(B,D)$ are Koszul pairs over a separable $R$-algebra and $M$ is an $A\otimes_{\sigma }B$-bimodule, then the Hochschild homology of $A\otimes _{\sigma }B$ with coefficients in $M$ is the homology of the total complex of $\left(\K_{\ast\ast }(A\otimes _{\sigma }B,M),\partial _{\ast\ast}^{h},\partial_{\ast \ast }^{v}\right) $. The pages $^{I}E_{\ast \ast }^{1}$ and $^{II}E_{\ast \ast }^{1}$ of the spectral sequences that correspond to the column-wise and row-wise filtrations are 
\begin{equation}\label{eq:spectralseq}
	^{I}E_{pq}^{1}=\hh_{q}\left( B,M\otimes C_{p}\right) \quad \text{and\quad}^{II}E_{pq}^{1}=\hh_{q}\left( A,D_{p}\otimes M\right).
\end{equation}%
  Both spectral sequences converge to the Hochschild homology of  $A\otimes_{\sigma }B$ with coefficients in $M.$
\end{theorem}

\begin{fact}[The Hochschild cohomology of twisted tensor products of Koszul rings.]
  For the computation of Hochschild cohomology of $A\otimes_{\sigma }B$ with coefficients in a bimodule $M$ one may also use a similar double complex. With the notation from \S\ref{fact:dcomplex}, let us remark that
\begin{equation}\label{dec_Hoch_coh}
  \K^{n}(A\otimes _{\sigma }B,M)=\Hom_{R^{e}}(\left(C\otimes_{{\tau}}D\right) _{n},M)\cong \textstyle \bigoplus \limits_{p+q=n}\Hom_{R^{e}}\left(C_{p}\otimes D_{q},M\right).
 \end{equation}%
  For every $R$-bilinear morphism $f:C_{p}\otimes D_{q}\rightarrow M$ we define
\begin{equation*}
  \partial _{h}^{p,q}(f)(c\otimes d)=\sum (\theta _{C,A}(c_{(1,1)}) \otimes 1)f(c_{(2,p)}\otimes d)+(-1)^{p+1}\sum f(c_{(1,p)}\otimes  d_{\tau' })(\theta_{C,A}({c_{(2,1)}}_{\tau'})\otimes 1).
\end{equation*}%
  Note that $\partial _{h}^{p,q}(f)$ is a morphism of $R$-bimodules from $C_{p+1}\otimes D_{q}$ to $M.$ Similarly, for any $f$ as above we define the map $\partial_{v}^{p,q}(f):C_{p}\otimes D_{q+1}\rightarrow M$ by
\begin{equation*}
	\partial _{v}^{p,q}(f)(c\otimes d)=\sum \left(1\otimes\theta_{D,B}({d_{(1,1)}}_{\tau })\right) f(c_{\tau}\otimes d_{(2,q)})+(-1)^{q+1}\sum f\left( c\otimes d_{(1,q)}\right) (1\otimes \theta _{D,B}(d_{(2,1)})).
\end{equation*}%
  Taking into account the identification (\ref{dec_Hoch_coh}), by direct computation, we see that the differential maps of the complex $\K^{\ast}(A\otimes _{\sigma }B,M)$ satisfy the relations $\partial^{n}=\partial_{h}^{p,q}+(-1)^{p}\partial_{v}^{p,q},$ and that the diagram 
\begin{equation}
  \begin{array}{c}
    \xymatrix{\Hom_{R^e}(C_p\ot D_{q+1}, M) \ar[rr]^{\pd_h^{p,q+1}} && \Hom_{R^e}(C_{p+1}\ot D_{q+1}, M)
				\\
    \Hom_{R^e}(C_p\ot D_q, M) \ar[rr]_{\pd_h^{p,q}} \ar[u]^{\pd_v^{p,q}} && \Hom_{R^e}(C_{p+1}\ot D_q,M)\ar[u]_{\pd_v^{p+1,q}} }
  \end{array}\label{dc_co}
\end{equation}%
  is commutative. The resulting double complex will be denoted by $\left( \K^{\ast\ast }(A\otimes_{\sigma }B,M),\partial _{h}^{\ast \ast },\partial_{v}^{\ast\ast }\right).$ 
  
  We have seen that the homology groups of the rows and columns of the double complex in Theorem \ref{thm: Koszul} computes the Hochschild of $A$ and $B$ with respect to appropriate $A$ and $B$ bimodules, respectively. For the Hochschild cohomology of $A\otimes _{\sigma }B$ a similar result does not hold in general. Nevertheless, supposing that $R$ is a separable  commutative $\Bbbk $-algebra and that all $R$-bimodules that we work with are symmetric, in view of the adjunction formula, we can rewrite $\K^{pq}(A\otimes _{\sigma}B,M)$ as follows
\begin{equation*}
  \Hom_{R^{e}}(C_{p}\otimes D_{q},M)=\Hom_{R}(C_{p}\otimes D_{q},M)\cong\Hom_{R}\left( C_{p},\Hom_{R}(D_{q},M)\right).
\end{equation*}%
  Through this isomorphism, for a given $q,$ the row $\K^{\ast q}(A\otimes_{\sigma }B,M)$ can be identified with the complex $\K^{\ast }\left( A,\Hom_{R}(D_{q},M)\right) $, where the linear space of $R$-module morphisms from $D_{q}$ to $M$ is regarded as an $A$-bimodule via the actions
\begin{equation*}
    (afa')(d)=\sum(a\otimes 1) f(d_{\nu })(a_{\nu }' \otimes 1).
\end{equation*}%
  Therefore, the page $_{II}E_{1}^{pq}$ of the spectral sequence associated to the row-wise filtration of $\K^{\ast \ast}(A\otimes_{\sigma }B,M)$ has in bidegree $(p,q)$ the Hochschild cohomology group $\hh^p(A,\Hom_{R}(D_{q},M)).$

  To give an analogous description of the columns of $\K^{\ast\ast }(A\otimes_{\sigma }B,M)$ we first endow  $\Hom_{R}(C_{p},M)$ with a $B$-bimodule structure, using the following left and right actions
\begin{equation*}
  (bgb')(c)=\sum (1\otimes b_{\lambda })g(c_{\lambda })(b\otimes 1).
\end{equation*}%
  Then, by the adjunction formula, $\Hom_{R^{e}}(C_{p}\otimes D_{q},M)\cong\Hom_{R}\left( D_{q},\Hom_{R}(C_{p},M)\right)$. Thus,  the column $\K^{p\ast}(A\otimes _{\sigma }B,M)$ is isomorphic to the complex $\K^{\ast}\left(B,\Hom_{R}(C_{p},M)\right) ,$ for any $p.$ In particular, the page $_{I}E_{1}^{pq}$ of the spectral sequence associated to the column-wise filtration has in the spot $(p,q)$ the Hochschild cohomology group of $\hh^q(B,\Hom_{R}(C_{p},M)).$ Summarizing, we sketched the proof of  the following.
\end{fact}

\begin{theorem}\label{thm:DoubleComplexCohom}
  We keep the notation and assumptions of \S\ref{notation}. If $(A,C)$ and $(B,D)$ are Koszul pairs over a separable $R$-algebra and $M$ is an $A\otimes _{\sigma}B$-bimodule, then the Hochschild cohomology of $A\otimes _{\sigma }B$ with coefficients in $M$ is the cohomology of the total complex of $\left( \K^{\ast\ast}(A\otimes _{\sigma }B,M),\partial_{h}^{\ast \ast },\partial _{v}^{\ast\ast}\right) $. Assuming that $R$ is commutative and that the $R$-bimodules $A$, $B$ and $M$ are symmetric, then the pages $_{I}E_{1}^{\ast\ast }$ and $_{II}E_{1}^{\ast \ast }$ of the spectral sequences that correspond to the column-wise and row-wise filtrations are given by
\begin{equation*}
  {}_{I}E_{1}^{pq}=\hh^{q}\left( B,\Hom_{R}(C_{p},M)\right) \text{\quad and\quad}_{II}E_{1}^{pq}=\hh^{q}\left( A,\Hom_{R}(D_{p},M)\right).
\end{equation*}%
  Both spectral sequences converge to the Hochschild cohomology of $A\otimes_{\sigma }B$ with coefficients in $M.$
\end{theorem}

\begin{fact}[Generalized Ore extensions of $R$-rings.]
  Let $A$ be an $R$-ring, where $R$ is a semisimple ring. If $\sigma :A\rightarrow M_{n}(A)$ is a morphism of $R$-rings then, for every couple $(i,j)$ of positive integers which are less than or equal to $n,$ there exists an $R$-bimodule endomorphism $\sigma _{ij}$ of $A$ such that $\sigma _{ij}(a)$ is the $(i,j)$-element of the matrix $\sigma(a), $ and
\begin{equation}\label{sigma1} 
  \sigma _{ij}(ab)=\sum\limits_{p=1}^{n}\sigma _{ip}(a)\sigma_{pj}(b)\quad\text{and\quad }\sigma _{ij}(1)=\delta_{i,j}1.
\end{equation}%
  Let $\{e_{1},\dots ,e_{n}\}$ denote the canonical basis (both as a left and as a right $R$-module) on $V:=R^{n}.$ For any morphism $\sigma $ as above there exists a unique twisting map $\widetilde{\sigma }:T_{R}^{a}(V)\otimes A\rightarrow A\otimes T_{R}^{a}(V)$ such that $\widetilde{\sigma }(1\otimes a)=a\otimes 1$ and
\begin{equation}\label{sigma3}
  \widetilde{\sigma }\left( e_{i_{1}}\otimes \cdots\otimes e_{i_{p}} \otimes a\right) =\sum_{j_{1},\dots ,j_{p}=1} ^{n}\left(\sigma_{i_{1}j_{1}}\circ \cdots\circ \sigma _{i_{p}j_{p}}\right)(a)\otimes e_{j_{1}}\otimes \cdots \otimes e_{j_{p}} .
\end{equation}%
  Since $\widetilde{\sigma }$ is a twisting map of $R$-rings, the twisted tensor product $A\otimes _{\widetilde{\sigma}}T_{R}^{a}(V)$ makes sense. The set
\begin{equation*}
  \mathcal{B}:=\{e_{i_{1}}\otimes \dots \otimes e_{i_{m}}\mid m\geq 0\text{ and }1\leq i_{1},\dots ,i_{m}\leq n\}
\end{equation*}%
  is a basis of the left $A$-module $A\otimes _{\widetilde{\sigma}}T_{R}^{a}(V).$ Therefore, by identifying  $e_{i_{1}}\otimes \dots \otimes e_{i_{m}}$ with $X_{i_{1}}\cdots X_{i_{m}},$ any element in $A\otimes_{\widetilde{\sigma}}T_{R}^{a}(V)$ can be written in a unique way as a left linear combination of noncommutative monomials in the indeterminates $X_{1},\dots ,X_{n}$ with coefficients in $A$. Via this identification, the multiplication in $A\otimes_{\widetilde{\sigma }}T_{R}^{a}(V)$ is determined by the relation
\begin{equation*}
  X_{i}a=\sum_{j=1}^{n}\sigma _{ij}(a)X_{j}.
  \end{equation*}%
  If $n=1$ then $\sigma $ identifies with an algebra automorphism $\sigma_{11} $ of $A$, and $A\otimes _{\widetilde{\sigma}}T_{R}^{a}(V)$ is the usual Ore extension $A_{\sigma _{11}}[X]$. For this reason, in the case when $\sigma $ is an arbitrary $R$-ring morphism from $A$ to $M_{n}(A),$ the corresponding twisted tensor product $A\otimes_{\widetilde{\sigma }}T_{R}^{a}(V)$ will be called the \emph{generalized Ore extension} of $A$ (with respect to $\sigma $) and it will be denoted by $A_{\sigma}\left\langle X_{1},\dots,X_{n}\right\rangle .$

  The twisting map $\widetilde{\sigma },$ associated to an algebra morphism $\sigma :A\rightarrow M_{n}(A),$ is invertible if and only if there is a matrix $(\sigma _{ij}^{\prime })_{i,j}$ whose elements are $R$-bimodule endomorphisms of $A$ satisfying the equations
\begin{equation*}
  \sum\limits_{p=1}^{n}\sigma _{pi}\circ \sigma _{jp}^{\prime} = \sum \limits_{p=1}^{n}\sigma _{pi}^{\prime }\circ \sigma_{jp}=\delta_{i,j}{\Id}_{A}.
\end{equation*}%
  The matrix $(\sigma _{ij}^{\prime })_{i,j}$ determines the inverse of $\widetilde{\sigma }$ by the formula
\begin{equation}\label{sigma4}
  \widetilde{\sigma }^{-1}\left(  a\otimes e_{i_{1}} \otimes \cdots\otimes e_{i_{p}} \right) =\sum_{j_{1},\dots,j_{p}=1} ^{n}e_{j_{1}}\otimes\cdots \otimes e_{j_{p}} \otimes\sigma_{i_{p}j_{p}}^{\prime}\cdots \sigma_{i_{1}j_{1}}^{\prime }(a).
\end{equation}%
  Let us now assume that $A:=\bigoplus_{p\in \mathbb{N}}A^{p}$ is a connected graded $R$-ring. Obviously, $\widetilde{\sigma }$ is a graded twisting map if and only if every $\sigma _{ij}$ is a morphism of graded $R$-bimodules. From now on we assume that $A$ is a connected $R$-ring and that $\sigma :A\rightarrow M_{n}(A)$ is a morphism of $R$-rings such that the corresponding twisting map $\widetilde{\sigma }$ is graded and invertible.
\end{fact}

\begin{theorem}\label{thm:GenOre}
  Let $R$ and $A$ denote a semisimple ring and a Koszul $R$-ring, respectively. If $\sigma $ is a morphism of $R$-rings from $A$ to $M_{n}(A)$ such that the corresponding twisting map $\widetilde{\sigma }$ is graded and invertible, then the generalized Ore extension $A_{\sigma}\left\langle X_{1},\dots ,X_{n}\right\rangle $ is a Koszul $R$-ring.
\end{theorem}

\begin{proof}
  We know that $A^{1}$ generates $A$ and $(A,T(A))$ is a Koszul pair, see Theorem \ref{thm:CharacterizationKoszul}. Let $V$ denote the $R$-bimodule $R^{n}.$ By Proposition \ref{lemma:td}, the pair $\left(T_{R}^{a}(V),C\right) $ is Koszul, where $C:=C_{0}\oplus C_{1}$ and $C_{1}=V. $ Applying the Proposition \ref{th:Tau_Lambda_Nu} for $\widetilde{\sigma }:T_{R}^{a}(V)\otimes A\rightarrow A\otimes T_{R}^{a}(V),$ we get a twisting map of corings $\tau' :T(A)\otimes C\rightarrow C\otimes T(A)$ and the entwining maps $\lambda :T(A)\otimes T_{R}^{a}(V)\rightarrow T_{R}^{a}(V)\otimes T(A)$ and $\nu :A\otimes C\rightarrow C\otimes A,$ such that the relations (\ref{Cond1})-(\ref{Cond4}) hold with $\sigma $ replaced by  $\widetilde{\sigma }.$ Hence by Theorem \ref{thm:koszul} the pair $(A\otimes _{\widetilde{\sigma}}T_{R}^{a}(V),T(A)\otimes _{{\tau }}C)$ is Koszul. In particular, $A_{\sigma }\left\langle X_{1},\dots,X_{n}\right\rangle $ is a Koszul $R$-ring.
\end{proof}

\begin{remark}
  The matrix $(\sigma _{ij}^{\prime })_{i,j}$ that gives the inverse of $\widetilde{\sigma}$ from relation (\ref{sigma4}) can be used to construct explicitly the maps $\tau' ,$ $\lambda $ and $\nu $ from the proof of the preceding theorem. 

  To construct $\tau' $ we define the $R$-bimodule map $\tau_{p}^{\prime }:\Omega_{p}(A)\otimes V\rightarrow V\otimes \Omega _{p}(A)$ as  follows. If $p=0$ then we take $\tau_{p}^{\prime }$ to be the isomorphism $R\otimes V\cong V\otimes R.$ On the other hand, if $p$ is positive, then we set
\begin{equation*}
  \tau _{p}^{\prime }(a_{1}\otimes \cdots \otimes a_{p}\otimes e_{i}):=\sum\limits_{j_{1},\dotsc,j_{p}=1}^{n}e_{j_{1}} \otimes \sigma_{j_{2}j_{1}}^{\prime }(a_{1})\otimes \sigma_{j_{3}j_{2}}^{\prime}(a_{2})\otimes \dotsb \otimes \sigma _{j_{p}j_{p-1}}^{\prime }(a_{p-1})\otimes\sigma_{ij_{p}}^{\prime }(a_{p}),
\end{equation*}%
  for any $a_{1},\dots ,a_{p}\in A.$ Since $\sum\nolimits_{j_{k}=1}^{n}\sigma_{j_{k}j_{k-1}}^{\prime }(a_{k-1})\sigma_{j_{k+1}j_{k}}^{\prime}(a_{k})=\sigma_{j_{k+1}j_{k-1}}^{\prime }(a_{k-1}a_{k})$ it follows easily that $\tau  _{\ast }^{\prime }$ is a morphism of chain complexes from $\left(\Omega _{\ast }(A)\otimes V,\partial _{\ast}\otimes{\Id}_{V}\right) $ to $\left( V\otimes\Omega _{\ast}(A),{\Id}_{V}\otimes \partial _{\ast}\right).$ To show that $\tau _{\ast}^{\prime }$ is compatible with the comultiplication of $\Omega _{\ast }(A)$ one uses the fact that this coring structure is given by the canonical identifications $\Omega _{p+q}(A)\cong \Omega _{p}(A)\otimes \Omega _{q}(A)$. Thus $\tau _{\ast }^{\prime }$  satisfies the conditions from Proposition \ref{pr:twist}, so for every $p$ there is a bimodule map $\tau_{p,1}':T_{p}(A)\otimes V\rightarrow V\otimes T_{p}(A)$ such that $\{\tau_{p,1}'\}_{p\in \mathbb{N}}$ is compatible with the coring structure of $T(A).$ Let $\tau _{p,0}':T_{p}(A)\otimes R\rightarrow R\otimes T_{p}(A)$ be the canonical $R$-bilinear isomorphisms. The family $\{\tau _{p,0}'\}_{p\in \mathbb{N}}$ together with $\{\tau _{p,1}'\}_{p\in \mathbb{N}}$ define the required twisting map $\tau' $ between $T(A)$ and $C.$

  We can now define the entwining structure $\lambda .$ For any $p$ we take $\lambda _{p}^{0}$ to be the canonical isomorphism $T_{p}(A)\otimes R\cong R\otimes T_{p}(A),$ and for $q>0$ we define $\lambda _{p}^{q}:T_{p}(A)\otimes V^{\otimes q}\rightarrow V^{\otimes q}\otimes T_{p}(A)$ by
\begin{equation*}
  \lambda _{p}^{q}:=\left( {\Id}_{V^{\otimes q-1}}\otimes \tau_{p,1}'\right)\circ \left( {\Id}_{V^{\otimes q-2}}\otimes \tau _{p,1}'\otimes V\right)\circ \cdots \circ \left( \tau_{p,1}'\otimes {\Id}_{V^{\otimes q-1}}\right) .
\end{equation*}%
  Finally, the entwining map $\nu :C\otimes A\rightarrow A\otimes  C$ coincides with the isomorphism $R\otimes A\cong A\otimes R$ on $C_{0}\otimes A$ and with $\widetilde{\sigma }^{-1}$ on $C_{1}\otimes A.$ Plainly, by construction, $\widetilde{\sigma },$ $\tau' ,$ $\lambda $ and $\nu $ satisfies the conditions from \S \ref{notation}.
\end{remark}

\begin{fact}[Hochschild (co)homology of generalized Ore extensions.]\label{Ore}
  We keep the notation and the assumptions from Theorem \ref{thm:GenOre}, and we suppose that $M$ is an $A_{\sigma}\left\langle X_{1},\dots ,X_{n}\right\rangle$-bimodule. By the proof of the aforementioned result $(A_{\sigma }\left\langle X_{1},\dots ,X_{n}\right\rangle ,T(A)\otimes _{{\tau}}C)$ is Koszul. Thus to compute the Hochschild homology of $A_{\sigma }\left\langle X_{1},\dots ,X_{n}\right\rangle $ we may use the double complex constructed in Theorem \ref{thm:HochHom}. In this particular case, the double complex has only two non-trivial rows as  $C_q=0$ for any $q>1$. Therefore, for the generalized Ore extension $A_{\sigma}\left\langle X_{1},\dots ,X_{n}\right\rangle $ the double complex becomes 
\begin{equation*}
  \begin{array}{c}
    \xymatrix{0 &V \htp M\htp T_0(A) \ar[l] \ar[d]^-{\pd^v_{0,1}} & V\htp M\htp T_1(A)\ar[l]_-{\pd_{1,1}^h} \ar[d]^-{\pd^v_{1,1}} & V\htp M\htp T_2(A)\ar[l]_-{\ d_{2,1}^h} \ar[d]^-{\pd^v_{2,1}} & \dotsb \ar[l]_-{\pd_{3,1}^h}
				\\
    0 & M\htp T_0(A) \ar[l] & M\htp T_1(A) \ar[l]^-{\pd_{1,0}^h} & M\htp T_2(A)\ar[l]^-{\pd_{2,0}^h} & \dotsb \ar[l]^-{\pd_{3,0}^h}       }
  \end{array}\label{Ore_double_complex_Hom}
\end{equation*}%
The arrows of this double complex are described in the next proposition.  Recall that the comultiplication on $C$ is taken such that for all elements in $V$ we have $\Delta _{1,0}(v)=v\otimes 1$ and $\Delta_{0,1}(v)=1\otimes v$. 
\end{fact}

\begin{proposition}
  The Hochschild homology of  $A_{\sigma }\left\langle X_{1},\dots
,X_{n}\right\rangle $ is isomorphic to the homology of the total complex
of the above double complex. For $c\in T_p(A)$, $m\in M$ and $v\in V$ the arrows are given by
\begin{align*}
  \partial _{p,0}^{h}({\,}m{\,\widehat{\otimes }\,}c)
  & =\sum m\left( \theta (c_{(1,1)})\otimes 1\right) {\,\widehat{\otimes }\,}c_{(2,{p-1})}+(-1)^{p}\sum \left({\theta}({c_{(2,1)}})\otimes 1\right) m{\,\widehat{\otimes}\,}c_{(1,{p-1})}.
				\\
  \partial _{p,1}^{h}(v{\,\widehat{\otimes }\,}m{\,\widehat{\otimes }\,}c) & =\sum {v\,\widehat{\otimes}}m\left(\theta (c_{(1,1)})\otimes 1\right) {\,\widehat{\otimes }\,}c_{(2,{p-1})}+(-1)^{p}\sum {v}_{\tau'}{\,\widehat{\otimes }\,}\left( {\theta }({c_{(2,1)_{\tau'}}})\otimes 1\right)m{\,\widehat{\otimes}\,}c_{(1,{p-1})}.
				\\
  \partial _{p,1}^{v}(v{\widehat{\otimes }}m{\widehat{\otimes}}c) & =\sum m(1\otimes v_{\tau'})\,\widehat{\otimes}\,c_{\tau'}-(1\otimes v)m\,\widehat{\otimes}\,c.
\end{align*}
\end{proposition}

Similarly, the double complex from Theorem \ref{thm:HochCoh} computing the Hochschild cohomology of the generalized Ore extension $A_{\sigma}\left\langle X_{1},\dots ,X_{n}\right\rangle $ has only two nonzero rows
\begin{equation*}
	\begin{array}{c}
   \xymatrix{0 \ar[r]  
        	& \Hom_{R^e}(V,M) \ar[r]^-{\pd_h^{0,1}}
        	& \Hom_{R^e}(T_1(A)\ot V,M) \ar[r]^-{\pd_h^{1,1}}
        	& \Hom_{R^e}(T_2(A)\ot V, M) \ar[r] & \dotsb
   \\
					0 \ar[r] & \Hom_{R^e}(R,M) \ar[r]_-{\pd_h^{0,0}}\ar[u]^{\pd_v^{0,0}}
					& \Hom_{R^e}(T_1(A), M)  \ar[r]_-{\pd_h^{1,0}} 	\ar[u]^-{\pd_v^{1,0}}
        	& \Hom_{R^e}(T_2(A),M) \ar[u]^-{\pd_v^{2,0}} \ar[r]
        	& \dotsb }
	\end{array}\label{Ore_double_complex_Coh}
\end{equation*}%

  The arrows of this double complex are described in the next proposition.

\begin{proposition}
  The Hochschild cohomology of the generalized Ore extension $A_{\sigma }\left\langle X_{1},\dots ,X_{n}\right\rangle $ is isomorphic to the cohomology of the total complex of the above double complex. For $c\in T_p(A)$, $m\in M$ and $v\in V$ the arrows are given by 
\begin{align*}
    \partial _{v}^{p,0}(f)(c\otimes v)
    	& =\sum (1\otimes v_{\tau'})f(c_{\tau'}\otimes 1)-f(c\otimes 1)(1\otimes v),
\\
    \partial _{h}^{p,0}(f)(c)
    	& =\sum (c_{(1,1)}\otimes 1) f(c_{(2,p)})+(-1)^{p}\sum f(c_{(1,p)})(c_{(2,1)}\otimes 1),
\\
    \partial _{h}^{p,1}(f)(c\otimes v)
    	& =\sum (c_{(1,1)}\otimes 1)f(c_{(2,p)}\otimes v)+(-1)^{p}\sum f(c_{(1,p)}\otimes v_{\tau'})({c_{(2,1)}}_{\tau '}\otimes 1).
\end{align*}
\end{proposition} 

\begin{remark}
  The $p$-degree component of $T(A)\ot_\tau C$ is $(T_p(A)\ot R)\oplus (T_{p-1}(A)\ot V)$. We deduce that $\Hd A\leq\Hd A_{\sigma }\left\langle X_{1},\dots ,X_{n}\right\rangle\leq \Hd A+1$, see Theorem \ref{thm:Hdim}. Therefore, if the Hochschild dimension of $A$ is $n$, then the Hochschild dimension of the generalized Ore extensions is $n+1$. Moreover, the Hochschild dimension of $A$ is infinite if and only if the Hochschild dimension of the ore extension $A_{\sigma }\left\langle X_{1},\dots ,X_{n}\right\rangle$ is so.
\end{remark}

\section{Koszul braided \texorpdfstring{$R$}{R}-bialgebras}

  In this section we are going to give some examples of Koszul rings and indicate some applications of our previous results. We  start by showing that, under certain mild assumptions, any almost-Koszul pair that consists of  connected braided commutative bialgebras is always Koszul. Some particular instances of such pairs, including trivial extensions, quantum affine spaces and the incidence algebra of the power poset $(\mathcal{P}(X),\subseteq)$ of a finite set $X$, will be discussed. 

  \begin{fact}[Braided $R$-bialgebras.]  \label{braided}
  A pair $(V,\mathfrak{c}{})$ is called \emph{braided $R$-bimodule} if $V$ is an $R$-bimodule and $\mathfrak{c}{}:V\otimes V\rightarrow V\otimes V$ is an $R$-bimodule map which satisfies the braid equation
\begin{equation*}
  \fc_1\circ\fc_2\circ\fc_1=\fc_2\circ\fc_1\circ{}\fc_2,
\end{equation*}
  where $\fc_1:=\mathfrak{c}\ot\Id_V$ and $\fc_2:=\Id_V\ot\;\fc$.
  A morphism of braided bimodules from $(V, \mathfrak{c}{}_{V})$ to $(W,\mathfrak{c}{}_{W})$ is a bimodule map $f:V\rightarrow W$ such that $\mathfrak{c}{}_{W}\circ (f\otimes f)=(f\otimes f)\circ \mathfrak{c}{}_{V}.$

  The definition of braided algebras from \cite{Ba} can be adapted for $R$-rings without difficulty. See also \cite{AMS} for  more details about braided algebras, coalgebras and bialgebras. The quadruple $(A,m,u,\mathfrak{c})$ is called a \emph{braided $R$-ring} if $(A,\mathfrak{c})$ is a braided $R$-bimodule and $(A,m,u)$ is an $R$-ring such that $\mathfrak{c}$ is a twisting map of $R$-rings. A \emph{morphism} of  braided $R$-rings is, by definition, a morphism of ordinary $R$-rings which, in addition, is a morphism of braided bimodules. The braided ring $(A,m,u,\mathfrak{c})$ is called \emph{braided commutative}, or  $\mathfrak{c}$-\emph{commutative} if $m\circ\mathfrak{c}=m$.

  We shall say that $(A,m,u,\mathfrak{c})$ is a \emph{graded} braided $R$-ring if and only if $A$ is graded and $\mathfrak{c}$ is a graded twisting map of graded $R$-rings. In this case we shall denote the restriction  of $\mathfrak{c}$ to $A^p\ot A^q$ by $\mathfrak{c}^{p,q}$. Note that, for a braided $R$-ring $(A,\mathfrak{c})$, we can consider the twisted tensor product $A\ot_{\mathfrak{c}}A$ with respect to the twisting map $\mathfrak{c}$.

  Braided $R$-corings, and their graded version, are defined in a dual manner. Braided $R$-bialgebras generalize braided bialgebras, introduced by Takeuchi in \cite{Ta}. By definition, a sextuple $(A,m ,u,\Delta,\varepsilon,\mathfrak{c})$ is a \emph{braided $R$-bialgebra} if $(A,m ,u,\mathfrak{c} {})$ is a braided $R$-ring and $(A,\Delta ,\varepsilon ,\mathfrak{c} {})$ is a braided $R$-coring such that $\Delta$ and $\varepsilon$ are morphisms of $R$-rings (on the  $R$-bimodule $A\otimes A$ we take the ring structure $A\otimes_\mathfrak{c} A$). A braided $R$-bialgebra $A$ is \emph{graded} if the underlying ring, coring and braiding structures are so (with respect to the same decomposition $A=\oplus_{n\in\mathbb{N}}A^n$).

  The free $R$-ring $T_R^a(V)$ and the symmetric $R$-ring $S_R(V,\mathfrak{c})$ of a braided $R$-bimodule $(V,\mathfrak{c})$ are the main examples of braided $R$-bialgebras. To define them one follows the same steps as in \cite{AMS}, where the case of braided bialgebras over a base field is considered.   First, one shows that there exists a unique $R$-bimodule map
  \[
    \mathfrak{c}_T:T_R^a(V)\ot T_R^a(V)\rightarrow T_R^a(V)\ot T_R^a(V),
  \]
  which extends $\mathfrak{c}$ and is a solution of the braid equation  that respects the grading on $T_R^a(V)\ot T_R^a(V)$.
  Since $\mathfrak{c}$ is a solution of the braid equation, $\mathfrak{c}_T$ is a twisting map of $R$-rings. Thus $(T_R^a(V),\mathfrak{c}_T)$ is a graded braided $R$-ring.
  Using the universal property of $T_R^a(V)$, one constructs a unique comultiplication  $\Delta:T_R^a(V)\to T_R^a(V)\ot_{\mathfrak{c}_T}T_R^a(V)$ such that $\Delta(v)=v\ot 1+1\ot v$, for any $v\in V$. We also define $\varepsilon:T_R^a(V)\to R$ to be the unique $R$-ring morphism that coincides with $\Id_R$ on the $0$-degree component of $T_R^a(V)$ and vanishes on $V$. 
  As in \cite{AMS} one shows that $(T_R^a(V),\Delta,\varepsilon,\mathfrak{c}_T)$ is a braided $R$-bialgebra.

  Let $(V,\mathfrak{c})$ be a symmetric braided bimodule, that is  $\mathfrak{c}$ satisfies  the additional condition $\mathfrak{c}^2=\Id_{V\ot V}$. Since  $W:=\im(\Id_{V\ot V}-\mathfrak{c})$ contains only primitive elements, the two-sided ideal $I$ generated by $W$ is a coideal in $T_R^a(V)$. Let $S_R(V,\mathfrak{c}):=T_R^a(V)/I$.   Since $\mathfrak{c}_T$ maps $I\otimes T_R^a(V)$ and $T_R^a(V)\otimes I$ to $T_R^a(V)\otimes I$ and $I\otimes T_R^a(V)$, respectively, we conclude that $S_R(V,\mathfrak{c})$ inherits a canonical structure of braided $R$-bialgebra. We denote the braiding of $S_R(V,\mathfrak{c})$ by $\mathfrak{c}_S$. We shall say that $S_R(V,\mathfrak{c})$ is the braided symmetric $R$-ring of $(V,\mathfrak{c})$. This $R$ ring is $\mathfrak{c}_S$-commutative by construction. \end{fact}

\begin{theorem}\label{te: Koszul Pair}
  Let $R$ be a separable  algebra over a field $\Bbbk$ of characteristic zero. Let $(A,\mathfrak{c}_A)$ and $(H,\mathfrak{c}_H)$ denote two connected  braided $R$-bialgebras such that $A^1=H^1$ and $\mathfrak{c}_A^{1,1}=-\mathfrak{c}_H^{1,1}$. If $A$ and $H$ are strongly graded and braided commutative (as $R$-rings), then $(A,H)$ and $(H,A)$ are Koszul pairs. In particular, $(S_R(V,\mathfrak{c}),S_R(V,\mathfrak{-c}))$ is a Koszul pair.
\end{theorem}

\begin{proof} Let $\{m_A^{p,q}\}_{p,q\in\mathbb{N}}$ and
	$\{\Delta_{p,q}^A\}_{p,q\in\mathbb{N}}$ denote the components of the multiplication and of the comultiplication of $A$. We claim that
\begin{equation}\label{eq:m_circ_Delta^A}
    m_{A}^{n,1}\circ \Delta^{A}_{n,1}=(n+1){\Id}_{A^{n+1}}
    \quad \text{and} \quad
    m_{A}^{1,n}\Delta^{A}_{1,n}=(n+1){\Id}_{A^{n+1}},
\end{equation}
  for any $n\geq 0$. We shall prove only the first identity, the other one can be obtained in a similar way. Let us note that the proof of \cite[eq. (21)]{AMS} works for arbitrary graded braided $R$-bialgebras, not only for usual bialgebras. Hence for any $x\in A^n$ and any $a\in A^1$ we have
\begin{equation}
  \Delta ^A_{n,1}(xa)=x\ot a+\sum x_{(1,n-1)}a_{\mathfrak{c}_A}\ot {x_{(2,1)}}_{\mathfrak{c}_A},  \label{ec:Produs2}
\end{equation}
  where in the above relation we used the notation $\mathfrak{c}_A(a'\ot a'')=\sum a''_{\mathfrak{c}_A}\ot a'_{\mathfrak{c}_A}$. On the other hand, since $A$ is braided commutative by hypothesis, we get $\sum a''_{\mathfrak{c}_A}a'_{\mathfrak{c}_A}=a'a''.$   This relation together with \eqref{ec:Produs2} yield
  \[
    (m_A^{n,1}\circ \Delta^A_{n,1})(xa)=xa+\sum x_{(1,n-1)}a_{\mathfrak{c}_A} {x_{(2,1)}}_{\mathfrak{c}_A}=xa+\sum x_{(1,n-1)}{x_{(2,1)}}a=xa+(m_A^{n-1,1}\circ \Delta^A_{n-1,1})(x)a.
  \]
  In conclusion, the first equality in \eqref{eq:m_circ_Delta^A} follows by induction, using the fact that the products $xa$ with $x\in A^n$ and $a\in A^1$ generates $A^{n+1}$ as a linear space, since $A$ is a strongly graded $R$-ring. 
  
  As $H$ is braided commutative, we also have
\begin{equation}\label{eq:m_circ_Delta^H}
  m_{H}^{n,1}\circ \Delta^{H}_{n,1}=(n+1){\Id}_{H^{n+1}}\quad\text{and} \quad m_{H}^{1,n}\circ\Delta^{H}_{1,n}=(n+1){\Id}_{H^{n+1}}.
\end{equation}
  Note that the relation \eqref{ec:Produs2} holds for the braided bialgebra $H$ as well. In particular, taking into account that $\mathfrak{c}_H^{1,1}=-\mathfrak{c}_A^{1,1}$, we get $\Delta_{1,1}^H(xy)=x\ot y+\mathfrak{c}_H^{1,1}(x\ot y)=x\ot y-\mathfrak{c}_A^{1,1}(x\ot y)$, for all $x,y\in H^1$. Since $H$ is strongly graded and $A$ is braided commutative we deduce that $m^{1,1}_A\Delta^H_{1,1}=0$, that is $(A,H)$ is almost-Koszul.
  
  Now we can prove that $(A,H)$ is Koszul, showing that $\K^{\ast}_l(A,H)$ is exact. We know that this complex is the direct sum of its subcomplexes $\K^{\ast}_l(A,H,p)$, where $p$ is an arbitrary nonnegative integer. Since $\K^{\ast}_l(A,H,0)$ is always exact, it is enough to show that $\Id_{\K^{\ast}_l(A,H,p)}$ is null homotopic, for any $p>0$. We need a sequence of maps $s^{\ast}:\K_{l}^{\ast+1}(A,H ,p)\rightarrow \K_{l}^{\ast}(A,H ,p)$ such that
\begin{equation} \label{eq:homotopy} 
  d_{l}^{n-1}\circ s^{n-1}+s^{n}\circ d_{l}^{n}=\Id_{\K^{n}(A,H,p)}.
\end{equation}
  We take $s^{-1}=0=s^{p}$, and for $0\leq n\leq p-1$ we set
\begin{equation*}
    s^{n}(h\ot a)=p^{-1}\sum ha_{(1,1)}\ot a_{(2,n)}.
\end{equation*}%
  Since $\Delta_{1,0}^A(a)=a\otimes 1$ and $\Delta_{0,1}^H(h)=1\otimes h$, it is easy to prove (\ref{eq:homotopy}) in the case when either $n=0$ or $n=p-1$ . We now suppose that $0<n<p-1$ and we pick $h\in H^{p-n}$ and $a\in A^{n} $. Then
\begin{align*}
  (d_{l}^{n-1}\circ s^{n-1})(h\otimes a)
  & =p^{-1}\sum\left(\Id_{H^{m-n}}\otimes m_{A}^{1,n-1}\right)\left(\Delta^{H}_{p-n,1}(ha_{(1,1)})\otimes a_{(2,n-1)}\right)
			\\
  & =p^{-1}\sum h\otimes a_{(1,1)}a_{(2,n-1)}+p^{-1}\sum {h_{(1,p-n-1)}a_{(1,1)}}_{\mathfrak{c}_H}\ot {h_{(2,1)}}_{\mathfrak{c}_H}a_{(2,n-1)}
			\\
  & =np^{-1}h\otimes a +p^{-1} \sum {h_{(1,p-n-1)}a_{(1,1)}}_{\mathfrak{c}_H}\ot {h_{(2,1)}}_{\mathfrak{c}_H}a_{(2,n-1)},
\end{align*}%
  where for the second equality we used (\ref{ec:Produs2}) written for $H$. A similar computation shows us that
\begin{align*}
  (s^{n}\circ d_{l}^{n})(h\otimes a)
  & =p^{-1}\sum(m_{H}^{p-n-1,1}\otimes \Id_{A^{n}})\left(h_{(1,p-n-1)}\otimes\Delta ^{A}_{1,n}(h_{(2,1)}a)\right)
			\\
  & =p^{-1} \sum h_{(1,p-n-1)}h_{(2,1)} \otimes a+ p^{-1}\sum {h_{(1,p-n-1)}a_{(1,1)}}_{\mathfrak{c}_{A}}\otimes    {h_{(2,1)}}_{\mathfrak{c}_A}a_{(2,n-1)}
			\\
  & =(p-n)p^{-1} h\otimes a+p^{-1}\sum  {h_{(1,p-n-1)}a_{(1,1)}}_{\mathfrak{c}_{A}}\otimes {h_{(2,1)}}_{\mathfrak{c}_A}a_{(2,n-1)}.
\end{align*}%
  We conclude the proof by remarking that $\mathfrak{c}_{H}^{1,1}=-\mathfrak{c}_{A}^{1,1}$, so \eqref{eq:homotopy} is true.
\end{proof}

\begin{fact}[Trivial extensions.]
  As a first application of Theorem \ref{te: Koszul Pair} we shall investigate some homological properties of trivial extensions.	Let $V$ be a non-zero $R$-bimodule, where $R$ is an algebra over a field of characteristic zero. Obviously, the identity map of $V\otimes V$ is an involutive solution of the braid equation. Hence we can apply the previous theorem for $\mathfrak{c}:=\Id_{V\ot V}$. In this case we get that $S_R(V,\mathfrak{c})=T_R^a(V)$. On the other hand, since the characteristic of $\Bbbk$ is $0$, the braided bialgebra $A_R(V):=S_R(V,-\mathfrak{c})$ coincides with the trivial algebra extension of $R$ with kernel $V$. Thus, $A_R(V)=R\oplus V$, and the product of two elements in $V$ is zero. The comultiplication of $A_R(V)$ is uniquely defined such that any element in $v\in V$ is primitive, that is $\Delta (v)=v\otimes 1+1\otimes v$. Note that the braiding in $A_R(V)$ restricted to $V\ot V$ is equal to $-\Id_{V\ot V}$.
\end{fact}

\begin{corollary}\label{co:td}\label{cor:hdim_triv_ext}
  If $R$ is a separable algebra over a field $\Bbbk$ of characteristic zero, then $\left(T_R^a(V),A_R(V)\right)$ and $(A_R(V),T_R^a(V))$ are Koszul. Moreover, $\Hd T_R^a(V)=1$ and $\Hd A_R(V)=\sup\{n\mid V^{\otimes n}\neq 0\}$.
\end{corollary}

  We can now use Corollary \ref{co:td} to compute the Hochschild (co)homology of the trivial extension $A_R(V)$ with coefficients in an  $A_R(V)$-bimodule $M$. For, we apply Theorem \ref{thm:HochHom}. Since the homogenous component of $T_R^a(V)$ of degree $n$ is $V^{\ot n}$, we  have $\K_n(A_R(V),M)=M\wot V^{\wot n}$. On the other hand, the component   $\Delta_{p,n-p}$ of the comultiplication of $T_R^a(V)$ is given by the formula 
\[
  \Delta_{p,n-p}(v_1\cdots v_n)=\textstyle \binom{n}{p} v_1\cdots v_p\ot v_{p+1}\cdots v_n.
\]
  where $v_1\cdots v_n$ is a shorthand notation for the tensor monomial $v_1\ot\cdots\ot v_n$, and $\binom{n}{p}=\frac{n!}{p!(n-p)!}$. Hence the differential of $\K_\ast(A_R(V),M)$   satisfies the relation $\partial_n:=n\partial_n'$, where \begin{equation*}\label{eq:differential_trivial_extension}     \partial _{n}'(m\wot v_1\wot\cdots\wot v_n)= (m\cdot v_1)\wot v_2\wot\cdots\wot v_n+(-1)^{n} (v_n\cdot m)\wot v_1\wot\cdots\wot v_{n-1}.
\end{equation*}
  Since the characteristic of $\Bbbk$ is zero, we conclude that the Hochschild homology of $A_R(V)$ with coefficients in $M$ is the homology of the complex $(M\wot V^{\wot\ast},\partial_\ast')$.

  In the more particular case $M:=A_R(V)$, we can go further on the computation of Hochschild homology. Indeed, in this setting we can identify $A_R(V)\wot V^{\wot n}$ with $V^{\wot n}\oplus V^{\wot n+1}$ via the linear map
\begin{equation*}
  (r,v_0)\wot v_1\wot\cdots\wot v_n\longmapsto(rv_1\wot\cdots \wot v_n,v_0\wot v_1\wot\cdots\wot v_n).
\end{equation*}
  Note that the endomorphism $\lambda_n$ of $V^{\wot n}$, that maps $v_1\wot\cdots\wot v_n$ to $(-1)^{n+1}v_n\wot v_1\wot\cdots\wot v_{n-1}$, defines an action of the cyclic group $\mathbb{Z}_n$ on $V^{\wot n}$. With respect to this action and the above identification, $\partial_n':V^{\wot n}\oplus V^{\wot n+1}\rightarrow V^{\wot n-1}\oplus V^{\wot n}$ is given by the formula
\begin{equation*}
  \partial_n'(x,y)=(0,x-\lambda_n(x)).
\end{equation*}
  Hence, for any $n$, we have
\begin{equation*}
  \hh_n(A_R(V),A_R(V))=\textstyle\Ker (\Id_{V^{\wot n}}-\lambda_n)\bigoplus\coker(\Id_{V^{\wot n+1}}-\lambda_{n+1})=\left(V^{\wot n}\right)^{\mathbb{Z}_n}\bigoplus\left(V^{\wot n+1}\right)_{\mathbb{Z}_{n+1}}.
\end{equation*}
  Note that the summands in the above equation are the spaces of invariant and coinvariant elements with respect to the actions induced by $\lambda_n$ and $\lambda_{n+1}$, respectively.

\begin{remark}
  The complex from Theorem \ref{thm:HochCoh} coincides  with the one introduced by Cibils in \cite{Ci3} using a different method, based on the work  on the rigidity of certain algebras \cite{Ci1, Ci2}. 
\end{remark}
  As a more particular case, we now consider the trivial extension associated to a quiver $\boldsymbol{\Gamma}$ with a finite set $\bG^0$ of vertices, but with an arbitrary set $\bG^1$ of arrows. The source and the target maps of $\bG$ will be denoted by $s,t:\bG^1\to \bG^0$.

  By a path of length $n$ (or, equivalently, an $n$-path) in $\bG$ we mean a sequence of $n$ arrows $\gamma:=a_1\cdots a_n$ such that $t(a_i)=s(a_{i+1})$, for any $i=1,\dots,n-1$. The vertices $s(\gamma):=s(a_1)$ and $t(\gamma):=t(a_n)$ will be called the source and the target of $\gamma$, respectively. A vertex $v\in \bG^0$ will be regarded as a path of length $0$, with the same source and target $v$. Clearly the paths of length $1$ coincide with the arrows of $\bG$. The set of $n$-paths will be denoted by $\bG^n$. Note that this notation is consistent with the one that we use for the set of vertices and arrows in $\bG$.

  Let $\Bbbk$ be a field and let $\Bbbk\boldsymbol{\Gamma}$-denote the vector space admitting as a basis the set $\bigcup_{n\in\mathbb{N}}\boldsymbol{\Gamma}^n$ of all paths in $\bG$. Recall that  with respect to the multiplication
\[
  (a_1\cdots a_n)\cdot (a_1'\cdots a_m')=
	\left\{
		\begin{array}{ll}
			a_1\cdots a_na_1'\cdots a_m',& \text{if }t(a_n)=s(a_1');
		\\
      	0, &\text{otherwise};
      \end{array}
   \right.
\]
  $\bK\bG$ is an associative $\bK$-algebra, which is called the \emph{path algebra} of $\bG$. Since $\bG^0$ is finite, the sum of all vertices is a unit of $\Bbbk\bG$. Note that any path can be regarded as the product of its arrows.

  There is a standard grading on $\Bbbk\bG$, given by the decomposition $\Bbbk\bG=\oplus_{n\in\mathbb{N}}\Bbbk\bG^n$, where $\Bbbk\bG^n$ is the linear subspace spanned by $\bG^n$. In particular, $\Bbbk\bG$ is a connected $\kG^0$-ring. Note that $\kG^0$ is isomorphic as a $\Bbbk$-algebra with the direct product of ${\#\bG^0}$ copies of $\Bbbk$, since the vertices of $\bG$ are orthogonal central idempotents in $\bK\bG$. Obviously, this $\bK$-algebra is separable. 
	
  Furthermore,  $\Bbbk\bG^1$ is a $\Bbbk\bG^0$-bimodule with respect to the structure induced by the multiplication. It is well known that the map $a_1\cdots a_n\mapsto a_1\ot\cdots\ot a_n$ is an isomorphism of connected $\Bbbk\bG^0$-rings between $\bK\bG$ and $T_{\Bbbk\bG^0}^a(\Bbbk\bG^1)$. Since $(\bK\bG^1)^{\ot n}\cong\bK\bG^n$, by applying Corollary \ref{cor:hdim_triv_ext} one proves the following.

\begin{corollary}\label{co:Hdim_trivial_extensions_of_quivers}
  Let $\bG$ denote a quiver with a finite number of vertices. The Hochschild dimension of the trivial extension $A_{\bK\bG^0}(\bK\bG^1)$ is given by $\Hd A_{\bK\bG^0}(\bK\bG^1)=\sup\{n\mid \bG^n\neq\emptyset\}.$
\end{corollary}

\begin{fact}[Multiparametric quantum spaces.]\label{fa:O_q}
  We are going to apply the results that we have obtained in order  to give an alternative proof of some known homological properties (cf. for instance \cite{GG2} and \cite{Wa}) of the multiparametric  quantum spaces $O_q(\mathbb{A}^n)$. Recall from \cite{AMS} that the affine quantum spaces can be defined as the symmetric $\Bbbk$-algebra $S_\Bbbk(V, \fc)$, where $V$ is an $n$-dimensional vector space and $\mathfrak{c}:V\ot_{\Bbbk}V\to V\ot_{\Bbbk}V$ is the solution of the braid equation given by $\mathfrak{c}(x_j\otimes x_i)= q_{ij} x_i \otimes x_j$. Here, the set $\{x_1,\dots,x_n\}$ denotes a basis on $V$, and the family of quantum parameters $q = \{q_{ij}\}_{i,j}$ defining $O_q(\mathbb{A}^n)$ is assumed to satisfy the conditions $q_{ij}q_{ji}=1$, for all $i,j=1,\dots,n.$

  One can think of $O_q(\mathbb{A}^n)$ as the algebra with generators $\{x_1,\dotsc, x_n\}$ and relations $x_jx_i = q_{ij}x_ix_j$ for all $1\leq i,j\leq n$, with the natural grading. Note that $x_i^2=0$, provided that $q_{ii}=-1$ and $\mathrm{char}(\Bbbk)\neq 2$. Therefore, if the later relation holds for all $i\in\{1,\dots,n\}$ then $O_q(\mathbb{A}^n)$ is a finite dimensional algebra of dimension $2^n$, and the homogeneous component of degree $n$ is one dimensional. Clearly, in this case all other components of higher degree vanish. On the other hand, if $q_{ii}=1$ for a certain $i$, then the subalgebra generated by $x_i$ is a polynomial ring, so $O_q(\mathbb{A}^n)$  is infinite dimensional.

  In view of Theorem \ref{te: Koszul Pair}, the pair $\left(O_q(\mathbb{A}^n),O_{-q}(\mathbb{A}^n)\right)$ is Koszul. By Theorem \ref{thm:Hdim}, it follows that the Hochschild dimension of $O_q(\mathbb{A}^n)$ is finite if and only if $O_{-q}(\mathbb{A}^n)$ is finite dimensional, \emph{i.e.} $q_{ii}=1$ for all $1\leq i\leq n$. In this situation, $\Hd(O_q(\mathbb{A}^n))=n$ and on the basis
\begin{equation*}
  \{ x_{i_1} x_{i_2} \dotsb x_{i_r} \ |\ 1\leq i_1< i_2 < \dotsb < i_r \leq n \}
\end{equation*}
  the comultiplication for $T(O_{q}(\mathbb{A}^n))\cong O_{-q}(\mathbb{A}^n)$ is defined by
\begin{equation*}
  \Delta_{p,q}(x_{i_1} x_{i_2} \dotsb x_{i_{p+q}}) = \sum\limits_{\nu \in \Sh(p,q)} q_\nu^{-1} x_{i_{\nu(1)}} \dotsb x_{i_{\nu(p)}} \otimes x_{i_{\nu(p+1)}} \dotsb x_{i_{\nu(p+q)}},
\end{equation*}
  where $\nu$ ranges in the set $\Sh(p,q)$ of all shuffles of type $(p,q)$ and the constant $q_\nu$ is the $q$-sign of the  shuffle $\nu$, defined as
\begin{equation*}
  q_\nu = \prod_{i<j,\, \nu(j) < \nu(i)}(-q_{\nu(j)\nu(i)}).
\end{equation*}
  Indeed, $O_{-q}(\mathbb{A}^n)=S_R(V,-\mathfrak{c})$ is a braided bialgebra. Let us denote its braiding by $\mathfrak{c}_O$. As the comultiplication of the multiparametric quantum space is an algebra map from $O_{-q}(\mathbb{A}^n)$ to the twisted tensor product $O_{-q}(\mathbb{A}^n)\otimes_{\mathfrak{c}_O} O_{-q}(\mathbb{A}^n)$, the desired formula for $\Delta_{p,q}$ can be easily proved by induction using the fact that each $x_i$ is a primitive element.
\end{fact}

\begin{fact}[The incidence algebra of the power set of a finite set.] \label{fa:d-cube}
  Let $\bK$ be a field of characteristic $0$ and let $X:=\{1,\dots,d\}$. The power set $\mathcal{P}(X)$ of $X$ is a poset with respect to the order relation given by inclusion. We are now going to show that the incidence algebra $A(X)$ of this poset is a Koszul $R$-ring, where $R:=\bK^{2^d}$. By definition, $A(X)$ has as a basis the set $\{e_{[I,J]}\mid I\subseteq J\}$, where the interval $[I,J]$ contains all subsets $K$ such that $I\subseteq K\subseteq J$. The multiplication of $A(X)$ is defined by the relation 
  \[
  e_{[I,J]} \cdot e_{[I',J']}=\delta_{J,I'} e_{[I,J']}.
  \]
  The unit of $A(X)$ is $\sum_{I\subseteq X}e_{[I,I]}$. The incidence algebra of $X$ is a graded $R$-ring. Its $n$-degree component $A^n(X)$ is spanned by all $e_{[I,J]}$ with $|J\setminus I|=n$. In particular, $A(X)$ is connected as an $R$-ring, as $A^0(X)$ is  generated by  $\{e_{[I,I]}\mid I\subseteq X\}$, which is a complete set of orthogonal idempotents. 
	
  The Koszulity of $A(X)$ will follow as a consequence of the fact that this $R$-ring is an example of braided symmetric $R$-bialgebra. Let $V:=A^1(X)$. Obviously, $V$ is an $R$-bimodule as it is a homogeneous component of  a connected $R$-ring. The set of all tensor monomials $e_{I_0\dots I_n}:=e_{[I_0,I_1]}\ot\cdots \ot e_{[I_{n-1},I_n]}$ with $I_k \subseteq I_{k+1}$ and $|I_{k+1}\setminus I_k|=1$ is a basis of $V^{\ot n}$ regarded as a vector space (recall that by $\ot$ we mean $\ot_R$).  

  Our goal now is to construct an involutive solution $\fc:V\ot V\to V\ot V$ of the braid equation. As a $\bK$-linear map, $\fc$ is uniquely defined by the elements $\fc(e_{I_0I_1I_2})$, where each $I_k$ is a predecessor of $I_{k+1}$ in the poset $\mathcal{P}(X)$, that is $I_k$ is a subset of $I_{k+1}$ and $|I_k|=|I_{k+1}|-1$. Hence  $I_1=I_0\bigcup\{i_1\}$ and $I_2=I_0\bigcup\{i_1,i_2\}$, where $i_1$ and $i_2$ are distinct elements which do not belong to $I_0$. Let $I_1':=I_0\bigcup\{i_2\}$. We can now define $\fc$ by
\begin{equation}\label{def:braiding}
 	\fc(e_{I_0I_1I_2}):=e_{I_0I_1'I_2}.
\end{equation}
  Let us note that the Hasse diagram of $\mathcal{P}(X)$ can be identified with the unit cube in $\mathbb{R}^d$, regarded as a quiver $\bU_d$. The set of vertices of this quiver is the set $\{0,1\}^d\subseteq \mathbb{R}^d$, and an arrow of this quiver has the source $(i_1,\dots,i_d)$ and the target $(j_1,\dots,j_d)$ if and only if the former vertex is the successor of the latter with respect to the  lexicographic ordered on $\{0,1\}^d$. Thus, from a geometric point of view, the $\bK$-linear map $\fc$ interchanges any pair of oriented $2$-paths having the same source and the same target.

  In view of the above geometric interpretation of the Hasse diagram, we shall say that $I_0$  (respectively $I_n$) is the source (respectively the target) of $e_{I_0\dots I_n}\in V^{\ot n}$. Since in the equation \eqref{def:braiding} both elements of the basis on $V^{\ot 2}$ have the same source and the same target it follows that $\fc$ is a morphism of $R$-bimodules. On the other hand, if $e_{I_0I_1I_2I_3}\in V^{\ot 3}$ and $I_k=I_0\bigcup \{i_1\dots,i_k\}$, then
\begin{equation*}
  (\fc_1\circ\fc_2\circ\fc_1)(e_{I_0I_1I_2I_3})=e_{I_0I_1'I_2'I_3}=(\fc_2\circ\fc_1\circ{}\fc_2)(e_{I_0I_1I_2I_3}),
\end{equation*}
  where $ I_1'=I_0\bigcup\{i_3\}$ and $I_2'=I_0\bigcup\{i_2,i_3\}$. In conclusion, $(V,\fc)$ is a braided $R$-bimodule. Clearly, $\fc$ is involutive, so it makes sense to consider the braided $R$-bialgebras $S_R(V,\fc)$ and $S_R(V,-\fc)$.
\end{fact}
\begin{theorem}\label{thm:power_set}
  There is an isomorphism  of graded $R$-rings $A(X)\cong S_R(V,\fc)$ In particular, $A(X)$ is a $3^d$-dimensional Koszul ring of Hochschild dimension $d$.
\end{theorem}
\begin{proof}
  Let $T:=T_R^a(V)$ be the braided $R$-bialgebra with braiding $\fc_T$, see \S\ref{braided}. For $n>d$, the $n$-degree homogeneous component $T^n$ of $T$ vanishes, as any increasing sequence $I_0\varsubsetneq \cdots\varsubsetneq I_k$ has length $k\leq d$. As $S:=S(V,\fc)$ is a quotient braided $R$-bialgebra of $T$ we deduce that $S^n=0$ for any $n>d$.  

  We claim that $\dim S^n=2^{d-n}\binom{d}{n}$ for any $n\leq d$. We start the proof of this relation by recalling that the \emph{involutive} braiding $\fc$ induces an action of the symmetric group $\Sigma_n$ on $T^n$ such that the transposition $\sigma_i:=(i,i+1)$ acts on $v_1\ot\cdots\ot v_n$ by
\[
  \sigma_i\cdot(v_1\ot\cdots\ot v_n)=v_1\ot\cdots\ot\fc(v_i\ot v_{i+1}) \ot\cdots\ot v_n,
\]
  for any $v_1,\dots,v_n\in V$. If $e_{I_0\dots I_n}$ is an element of the basis on $T^n$, with $I_{k}= I_{0}\bigcup\{i_1,\dots i_{k}\}$, then 
\begin{equation}\label{eq:action}
  \sigma\cdot e_{I_0\dots I_n}=e_{J_0\dots J_n},
\end{equation} 
  where $J_0=I_0$ and $J_k=J_0\bigcup\{i_{\sigma(1)},\dots,i_{\sigma(k)}\}$, for any $k=1,\dots,n$.  

  Regarding each permutation $\sigma\in\Sigma_n$ as an $R$-bilinear automorphism of $V^{\ot n}$ and taking into account the definition of the braided symmetric $R$-bialgebra $S$, we get 
\begin{equation*}\label{eq:A^n}
  S^n=\frac{V^{\ot n}}{\sum_{i=1}^n \im(1-\sigma_i)}=\frac{V^{\ot n}}{\sum_{\sigma\in\Sigma_n}\im(1-\sigma)}.
\end{equation*}
  Note that the second equation is a consequence of the relation
\begin{equation*}
  \Id_{V^{\ot n}}-\sigma_{i_1}\cdots\sigma_{i_n}= (\Id_{V^{\ot n}}-\sigma_{i_n})+(\sigma_{i_n}- \sigma_{i_{n-1}}\sigma_{i_n})+\cdots+(\sigma_{i_2} \cdots\sigma_{i_n}-\sigma_{i_{1}}\sigma_{i_2} \cdots\sigma_{i_n})
\end{equation*}
  and of the fact that $\sigma_1,\dots,\sigma_{n-1}$ generate $\Sigma_n$. Hence, $S^n$ coincides with the coinvariant quotient space $\left(V^{\ot n}\right)_{\Sigma_n}$. Since, by assumption $\bK$ is a field of characteristic zero, the canonical linear map from the invariant subspace $\left(V^{\ot n}\right)^{\Sigma_n}$ to $S^n$ is a $\bK$-linear isomorphism.

  In conclusion, we have to show that $\dim \left(V^{\ot n}\right)^{\Sigma_n}=2^{d-n}\binom{d}{n}$. For, we split the representation $T^n=V^{\ot n}$ as a direct sum of sub-representations
\[
	T^n=\oplus_{I\subseteq J}T^n(I,J),
\]
	where $T^n(I,J)$ denotes the vector space spanned by the elements $e_{I_0,\dots,I_n}$ with source $I$ and target $J$. The summands are indexed by all pairs $(I,J)$ such that $I\subseteq J\subseteq \{1,\dots,d\}$ and $J\setminus I$ is a set with $n$ elements. Thus the above decomposition has
\begin{equation}\label{eq:binom}
  \sum_{k=n}^d\binom{d}{k}\binom{k}{n}=2^{d-n}\binom{d}{n}
\end{equation}
  terms. On the other hand $\dim T^n(I,J)=n!$ and, by relation \eqref{eq:action}, it follows that the action of $\Sigma_n$ on $T^n(I,J)$ is transitive for any $I\subseteq J$ with $|J\setminus I|=n$. We deduce that $T^n(I,J)^{\Sigma_n}$ is a vector space of dimension $1$. Hence our claim has been proved, as $\dim S^n=\dim \oplus_{I\subseteq J}T^n(I,J)^{\Sigma_n}=2^{d-n}\binom{d}{n}$.   
	
  We can now prove that $S$ and $A(X)$ are isomorphic. Let $\varphi:T\to A(X)$ be the canonical morphism of graded $R$-rings that extends the identity map $\varphi^0:T^0\to A^0(X)$ and the $R$-bimodule morphism $\varphi^1:T^1\to A^1(X)$ mapping $e_{IJ}$ to $e_{[I,J]}$, for any $I\subseteq J$ with $J$ a successor of $I$. Since $\varphi^n(e_{I_0\dots I_n})=e_{[I_0,I_n]}$, it is easy to see that $\varphi$ vanishes on the ideal generated by the image of $\Id_{T^2}-\fc$. Hence $\varphi$ induces a surjective graded ring morphism $\overline{\varphi}:S\to A(X)$. 
	
  To prove that $\overline{\varphi}$ is an isomorphism we notice that  $\{e_{[I,J]}\mid I\subseteq J\text{ and } |J\setminus I|=n \}$ is a basis of $A^n(X)$. Hence $\dim A^n(X)=2^{d-n}\binom{d}{n}$ by the proof of equation \eqref{eq:binom}. Therefore, every $\overline{\varphi}^n$ is bijective and 
\[
  \dim A(X)=\sum_{n=0}^d \binom{d}{n}2^{d-n}=(1+2)^d=3^d.
\]
  It remains to show that $\Hd A(X)=d$.  Proceeding as above we can show that $S^n(V,-\fc)=0$ for $n>d$. On the other hand, $\dim S^n(V,-\fc)=2^{d-n}\binom{d}{n}$, for $n\leq d$. We conclude applying Theorem \ref{thm:Hdim}.
\end{proof}

\begin{fact}[A generalization of Fr\"{o}berg Theorem.]
  Let $\bK$ be a field. By a result of Fr\"{o}berg \cite{Fr}, the quotient of the free algebra $\bK\langle X_1,\dots,X_n\rangle$ by the ideal generated by a set of non-commuting monomials of degree $2$ is a Koszul $\bK$-algebra. As an application of our results on Koszul pairs, we shall prove a similar result for the quotient of a path algebra by an ideal which is generated by $2$-paths.

  We fix a quiver $\bG$ with a finite number of vertices and a set $\varPhi$ of $2$-paths. The complement of $\varPhi$ in $\bG^2$ will be denoted by $\varPhi'$. We shall also use the following notation: $R:=\bK\bG^0$ and $V:=\bK\bG^1$. We  define the connected $R$-ring $A(\bG,\varPhi)$ to be the quotient of the path algebra $\bK\bG$ by the ideal generated by $\varPhi$.

  For $n=0$ and $n=1$ we set $\bG^n_{\varPhi}:=\bG^n$. On the other hand, if $n\geq 2$ let $\bG_{\varPhi}^n$ denote the set of $n$-paths $\gamma=a_1\cdots a_n$ such that $a_ia_{i+1}\in\varPhi$, for all $i$. The sets $\bG^n_{\varPhi'}$ are defined in a similar way. 
	
  The linear transformation that maps a path in $\bG^n_{\varPhi'}$ to its equivalence class in $A(\bG,\varPhi)^n$ is an isomorphism, for all $n$. This property allows us to identify $A(\bG,\varPhi)$ with the connected $R$-ring $\oplus_{n\geq 0}\bK\bG^n_{\varPhi'}$, whose multiplication is given for  $a_1\cdots a_n\in \bG^n_{\varPhi'}$ and $a'_1\cdots a'_m \in \bG^m_{\varPhi'}$ by
\[
  (a_1\cdots a_n)\cdot (a'_1\cdots a'_m) =
  \left\{
\begin{array}{ll}
a_1\cdots a_na'_1\cdots a'_m, & \text{if } t(a_n)=s(a'_1) \text{ and } a_na'_1\in\varPhi';
				\\
0, & \text{otherwise}.
\end{array}
\right.
\]
  From now on we shall regard the $R$-ring $A(\bG,\varPhi)$ as a subspace of $\bK\bG$ with respect to this multiplication.

  The $R$-bimodule isomorphisms $\Delta_{p,q}:\bK\bG^{p+q}\to\bK\bG^{p}\ot\bK\bG^{q}$
\begin{equation}\label{eq:D_{p,q}}
  \Delta_{p,q}(a_1\cdots  a_{p+q})=a_1\cdots a_p\ot a_{p+1}\cdots a_{p+q}
\end{equation}
  define an $R$-coring structure on $\bK\bG$ (recall that $\ot =\ot_R$). Let $C(\bG,\varPhi)_n$ be the linear subspace spanned by $\bG^n_{\varPhi}$.  By definition, $C(\bG,\varPhi):=\oplus_{n\geq 0}C(\bG,\varPhi)_n$ is a graded $R$-subcoring of $\bK\bG$.
\end{fact}

\begin{theorem}\label{thm:Froberg}
  The pair $(A(\bG,\varPhi),C(\bG,\varPhi))$ is Koszul and $\Hd A(\bG,\varPhi)=\sup\{n\mid \bG^n_\varPhi\neq\emptyset\}$.
\end{theorem}

\begin{proof}
  Let $A:=A(\bG,\varPhi)$ and $C:=C(\bG,\varPhi)$. Clearly, by construction, $A$ is a connected $R$-ring and $C$ is a connected $R$-coring. Let $\theta_{C,A}:=\Id_V$. If $\gamma=aa'$ is a $2$-path in $\varPhi$ then $m^{1,1}\circ\Delta_{1,1}(\gamma)=0$. Indeed, $\Delta_{1,1}(\gamma)=a\ot a'$ and the product in $A$ of $a$ and $a'$ is $0$, since $aa'\not\in\varPhi'$. Thus $(A,C)$ is almost-Koszul.

  By the definition of Koszul pairs, it is enough to prove that $\K^l_\ast(A,C,m)$ is exact for every $m>0$. Recall that $\K^l_n(A,C,m)=A^{m-n}\ot C_n$, for any $n\leq m$, and there are no nontrivial $n$-chains in higher degrees, see \S\ref{K(A,C,m)}. Note that the set of tensor monomials $\gamma'\ot \gamma''$, with $\gamma'\in\bG^{p}_{\varPhi'}$ and $\gamma''\in\bG^{q}_{\varPhi}$ satisfying the condition $t(\gamma')=s(\gamma'')$, is a basis $\bG^{p, q}_{\varPhi',\varPhi}$ on $A^{p}\ot C_q$. Thus $\bG^{m-n,n}_{\varPhi',\varPhi}$ is a basis on $\K_n^l(A,C,m)$.

  The complex $\K^l_\ast(A,C,m)$ is exact in degree $0$, as $A$ is strongly graded and $d^l_1$ is induced by the multiplication. We now assume that $0<n<m$. Let $\omega$ be an $n$-cycle.  Thus
\[
  \omega=\sum_{i=1}^r\alpha_i\lambda_i\ot \lambda'_i,
\]
  for some $\alpha_i\in\bK$ and  $\lambda_i\ot \lambda'_i\in\bG^{m-n,n}_{\varPhi',\varPhi}$. We may assume without loss of generality that $\lambda_i\ot \lambda'_i$ and $\lambda_j\ot \lambda'_j$ are distinct for $i\neq j$. Let $\lambda_i=\gamma_ia_i$, with $\gamma_i \in\bG^{m-n-1}$ and $a_i\in\bG^{1}$. Similarly, $\lambda_i'=a'_i\gamma'_i$, with $a_i'\in\bG^{1}$ and $\gamma_i' \in\bG^{n-1}$. By definition of $d_n^l$ and relation \eqref{eq:D_{p,q}} we get
\[
  d^l_n(\omega)=\sum_{i=1}^r\alpha_i\gamma_ia_i\cdot a'_i\ot \gamma'_i.
\]
  Let $I$ denote the set of all $i$ such that $a_ia'_i\in\varPhi$. Then, for any $i\in I$, we have $\gamma_ia_i\cdot a'_i\ot \gamma'_i=0$.  On the other hand, if $i\not\in I$ then  $\gamma_ia_i\cdot a'_i\ot \gamma'_i=\gamma_ia_i a'_i\ot \gamma'_i$ is an element of $\bG^{m-n+1,n-1}_{\varPhi',\varPhi}$. Moreover, if $i$ and $j$ are distinct elements which do not belong to $I$, then $\gamma_ia_i a'_i\ot \gamma'_i$ and $\gamma_ja_j a'_j\ot \gamma'_j$ are distinct as well. Since $d^l_n(\omega)=0$ it follows that $\alpha_i=0$, for any $i\not\in I$. Let us remark that $\gamma_i\ot a_ia'_i\gamma'_i$ is an element of $\bG^{m-n-1,n+1}_{\varPhi',\varPhi}$, for any $i\in I$, so it is a chain of degree $n+1$. Since
\[
  \omega=\sum_{i\in I}\alpha_i\gamma_ia_i\ot a'_i\gamma'_i=d^l_{n+1}(\sum_{i\in I}\alpha_i\gamma_i\ot a_ia'_i\gamma'_i)
\]
  we deduce that any $n$-cycle is a boundary. As $d_m^l$ maps $1\ot a_1\cdots a_m\in\bG^{0,m}_{\varPhi',\varPhi}$ to $a_1\ot a_2\cdots a_m\in \bG^{1,m-1}_{\varPhi',\varPhi}$, this function is injective. In conclusion $\K^l_\ast(A,C,m)$ is exact.

  The computation of the Hochschild dimension of $A$ follows by Theorem \ref{thm:Hdim}.
\end{proof}

\section*{Acknowledgments}

  The first and the second named authors were partially supported by the research project MTM2010-20940-C02-01. The third named author was financially supported by CNCS, Contract 560/2009 (CNCS code ID 69) and Contract 253/05.10.2011  (CNCS code ID 0635)

\end{document}